\documentclass[a4paper]{amsart}
\usepackage{amsmath,amsthm,mathrsfs}
\usepackage{amssymb}
\usepackage{amscd}
\usepackage[all]{xy}
\usepackage{color}
\usepackage{combelow}
\usepackage{enumerate}
\usepackage{hyperref}
\usepackage[all]{xy}
\usepackage[a4paper,
            bindingoffset=0.2in,
            left=1in,
            right=1in,
            top=1in,
            bottom=1in,
            footskip=.25in]{geometry}

\setcounter{tocdepth}{1}

\setcounter{MaxMatrixCols}{10}

\newtheorem{theorem}{Theorem}
\newtheorem*{claim*}{Claim}
\newtheorem*{summary*}{Summary}
\newtheorem*{MA}{Main Assumptions}
\newtheorem*{MT}{Main Theorem}

\newtheorem{corollary}[theorem]{Corollary}
\newtheorem{definition}[theorem]{Definition}
\newtheorem{example}[theorem]{Example}
\newtheorem{lemma}[theorem]{Lemma}
\newtheorem{proposition}[theorem]{Proposition}
\newtheorem*{remark}{Remark}
\newtheorem*{remarks}{Remarks}

\newcommand{\diffto}{\xrightarrow{\raisebox{-0.2 em}[0pt][0pt]{\smash{\ensuremath{\sim}}}}}

\renewcommand{\gg}{\mathfrak{g}}

\newcommand{\ka}{\mathfrak{k}}
\newcommand{\rmap}{\longrightarrow}

\newcommand{\acts}{\curvearrowright}
\newcommand{\act}{\mathrm{a}}
\newcommand{\dd}{\mathrm{d}}

%DELIMITERS
\newcommand{\la}{\langle}
\newcommand{\ra}{\rangle}

\newcommand{\pr}{\mathrm{pr}}
\newcommand{\X}{\mathfrak{X}}
\newcommand{\Gr}{\mathrm{Gr}}
\newcommand{\hor}{\mathrm{hor}}
\newcommand{\ho}{\mathrm{h}_{\theta}}
\newcommand{\cu}{\mathrm{R}_{\theta}}
\renewcommand{\Form}{\omega_{\theta}}
\newcommand{\Ehc}{H_{\theta}}
\newcommand{\piv}{\pi^{\mathrm{v}}}
\newcommand{\pih}{\pi^{\mathrm{h}}}

\begin{document}
\title{
Normal forms for principal Poisson Hamiltonian spaces}
\author{Pedro Frejlich}
\address{UFRGS, Instituto de Matem\'atica Pura e Aplicada, Porto Alegre, Brasil}
\email{frejlich.math@gmail.com}
\author{Ioan M\u{a}rcu\cb{t}}
\address{Radboud University Nijmegen, IMAPP, 6500 GL, Nijmegen, The Netherlands}
\email{i.marcut@math.ru.nl}
\begin{abstract}
We prove a normal form theorem for principal Hamiltonian actions on Poisson manifolds around the zero locus of the moment map. The local model is the generalization to Poisson geometry of the classical minimal coupling construction from symplectic geometry of Sternberg and Weinstein. Further, we show that the result implies that the quotient Poisson manifold is linearizable, and we show how to extend the normal form to other values of the moment map.
\end{abstract}
\maketitle
\tableofcontents
\setcounter{tocdepth}{1}
\section*{Introduction}

%In this paper we study the local structure of Hamiltonian actions on Poisson manifolds, and we prove extension of the classical normal form theorems in symplectic geometry to the Poisson setting.

%Let $G$ be a Lie group with Lie algebra $\mathfrak{g}$. 

% with $G$-equivariant , if there 

A \emph{Poisson Hamiltonian space}
\begin{equation}\label{Poisson_Ham_space}
G\acts (M,\pi)\stackrel{\mu}{\rmap}\gg^*
 \end{equation}
consists of: 
\begin{itemize} 
\item a Poisson manifold $(M,\pi)$, on which a Lie group $G$ acts by Poisson diffeomorphisms,
\item an equivariant map $\mu:M\to \gg^*$, called the \emph{moment map}, where $\gg$ Lie algebra of $G$, 
\end{itemize}with the property that the infinitesimal action $\act_M(v) \in \mathfrak{X}(M)$ of an element $v \in \gg$ is the Hamiltonian vector field corresponding to the function $\langle \mu,v\rangle\in C^{\infty}(M)$:
\[
 \act_M(v) = \pi^{\sharp}\dd \langle \mu,v\rangle.
\]

We prove a normal form theorem for a Poisson Hamiltonian space around the zero locus of the moment map $P:=\mu^{-1}(0)$, under the assumption that the action is \emph{proper and free}. In that case, $P$ is the total space of a principal $G$-bundle $p_S:P \to S:=\mu^{-1}(0)/G$ and, by the Marsden-Weinstein reduction in the Poisson setting \cite{Marsden_Ratiu,Marsden_Weinstein}, its base inherits the structure of a Poisson manifold $(S,\pi_S)$. In the language of Dirac geometry (see e.g.\ \cite{Bursztyn,Meinrenken}), $\pi_S$ is determined by the equality of the pullbacks to $P$ of the two Poisson structures $\pi$ and $\pi_S$, viewed as Dirac structures via their graphs:
\[
 i^!\mathrm{Gr}(\pi) = p_S^!\mathrm{Gr}(\pi_S),
\]where $i:P \to M$ stands for the inclusion. The local model for \eqref{Poisson_Ham_space} is built using a principal connection $\theta\in \Omega^1(P;\mathfrak{g})$ on the principal $G$-bundle $p_S:P\to S$, and is given by the Dirac structure
\begin{equation}\label{Dirac model}
 L_{\widetilde{\theta}}:=\mathcal{R}_{-\dd\widetilde{\theta}}\, p^!\mathrm{Gr}(\pi_S) \quad \textrm{on}\quad  P\times \gg^*,
\end{equation}
i.e., it is the pullback Dirac structure 
$ p^!\mathrm{Gr}(\pi_S)$, where $p=p_S\circ \pr_1:P\times \gg^*\to S $, gauged transformed by the 2-form $-\dd \widetilde{\theta}$, where  
\begin{align*}
&\widetilde{\theta}\in \Omega^1(P\times \mathfrak{g}^*), &\widetilde{\theta}_{(x,\xi)}:=\textrm{pr}_1^*\langle \xi, \theta_x\rangle.
\end{align*}
Then, for the diagonal action of $G$, we have that
\begin{equation}
G\acts (P \times \gg^*,L_{\widetilde{\theta}})\stackrel{\mathrm{pr}_2}{\rmap}\gg^*
 \end{equation}is a Hamiltonian Dirac space, in the sense that, for all $v\in\gg$,
\begin{align*}%\label{Dirac moment}
& \act_{P \times \gg^*}(v)+\dd \langle \mathrm{pr}_2,v\rangle \in \Gamma(L_{\widetilde{\theta}}).
\end{align*}

Our main result can be stated as follows (an improvement will be presented later):

\begin{MT}\label{thm : zero set}
Consider a proper and free Poisson Hamiltonian space
\eqref{Poisson_Ham_space}. Then the Dirac manifold $(P \times \gg^*,L_{\widetilde{\theta}})$ is the graph of a Poisson structure $\pi_S^{-\dd\widetilde{\theta}}$ in an invariant neighborhood $U$ of $P$, and the Hamiltonian Poisson spaces
\begin{align*}
 & G\acts (M,\pi)\stackrel{\mu}{\rmap}\gg^*, & G\acts (U,\pi_S^{-\dd\widetilde{\theta}})\stackrel{\mathrm{pr}_2}{\rmap}\gg^*
\end{align*}are isomorphic around $P$, by a $G$-equivariant diffeomorphism which is the identity on $P$.
\end{MT}

In the symplectic case $(M,\pi=\omega^{-1})$, the Poisson structure on $S$ is also symplectic $\pi_S=\omega_S^{-1}$, and it is obtained as the classical Marsden-Weinstein reduction. In this case, the Dirac structure $L_{\widetilde{\theta}}$ on $P\times\gg^*$ is the graph of the closed 2-form $p^*\omega_S-\dd\widetilde{\theta}$, which is the classical ``coupling construction" due to Sternberg and Weinstein \cite{Sternberg, Weinstein}. The analog of our Main Theorem is known in symplectic geometry \cite[Theorem 6.1]{Meinrenken_notes}, however, the original statement is difficult to trace back: it would follow from a version of the coisotropic embedding theorem \cite{Gotay} in the $G$-equivariant setting; the content of \cite[Chapters 35-40]{Guillemin_Sternberg} comes very close to it; and the exact statement can be found in \cite[Proposition 5.2]{Jeffrey_Kirwan} -- for finding this reference, we would like to thank Eckhard Meinrenken.

The setting of the Main Theorem implies that $M/G$ has a Poisson structure $\pi_{M/G}$, for which the quotient map $M\to M/G$ and the inclusion $S\to M/G$ are Poisson maps. The Main Theorem provides a normal form for $(M/G,\pi_{M/G})$ around the Poisson submanifold $S$. In the symplectic case, the quotient Poisson manifold $(P\times\gg^*,p^*\omega_S-\dd \widetilde{\theta})/G$ has been studied already in \cite{Montgomery}, but only later it has been identified to serve as the first order local model around the symplectic leaf $(S,\pi_S)$ \cite{CrMa12}, and as the integrable version of Vorobjev's local model \cite{Vorobjev01,Vorobjev05}. Around more general Poisson submanifolds, a first order local model was developed only recently \cite{FeMa22}, and our Main Theorem implies:

\begin{corollary}\label{corollary:linearizable}
Under the assumptions of the Main Theorem, the Poisson manifold $(M/G,\pi_{M/G})$ is linearizable around the Poisson submanifold $(S,\pi_S)$, in the sense of \cite{FeMa22}.
\end{corollary}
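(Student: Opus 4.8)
The plan is to deduce the statement from the Main Theorem together with the construction of the first-order local model in \cite{FeMa22}, the point being that the reduced coupling space $(P\times\gg^*)/G$ \emph{is} that model. First I would pass to quotients. The Main Theorem produces a $G$-equivariant Poisson isomorphism $\phi$ from an invariant neighbourhood of $P$ in $(M,\pi)$ onto an invariant neighbourhood of $P$ in $(U,\pi_S^{-\dd\widetilde\theta})$, which restricts to the identity on $P$ and intertwines $\mu$ with $\pr_2$. Because the $G$-action is proper and free near $P$ on both sides, $\phi$ descends to a Poisson diffeomorphism $\overline{\phi}$ from a neighbourhood of $S$ in $(M/G,\pi_{M/G})$ onto a neighbourhood of $S$ in $\big((P\times\gg^*)/G,\;\pi_S^{-\dd\widetilde\theta}/G\big)$ that is the identity on $S$ (here I use that $S=P/G$ sits inside as the zero section $[\,P\times\{0\}\,]$). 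Thus the corollary reduces to showing that the reduced coupling manifold is the first-order local model of \cite{FeMa22} around $(S,\pi_S)$.

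For the latter I would identify the relevant first-order data. Via $\overline{\phi}$, the normal bundle $N_S(M/G)$ is the coadjoint bundle $E:=P\times_G\gg^*$, and dually the conormal Lie algebroid $N^*S$ is $P\times_G\gg$, whose anchor and bracket are those determined by the connection $\theta$ --- the fibrewise piece being the bracket of $\gg$ and the transverse piece governed by the curvature $\dd\theta$; this is precisely the algebroid underlying the Sternberg--Weinstein coupling, as in \cite{Montgomery,CrMa12}. Together with $(S,\pi_S)$ and the connection, these are exactly the ingredients from which \cite{FeMa22} assembles its local model. It then remains to check that the Dirac-geometric presentation of $L_{\widetilde\theta}$ --- the $(-\dd\widetilde\theta)$-gauge transform of $p^!\Gr(\pi_S)$ --- descends, after quotienting, to the defining presentation of that local model; equivalently, that $\pi_S^{-\dd\widetilde\theta}/G$ is isomorphic to the model built from its own first-order data along $S$. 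The coupling construction is ``exact'' in this respect: the fibrewise-linear part is the bundle of Lie--Poisson structures on $\gg^*$, which is its own linearization, and the only transverse correction is the one linear in $\theta$, so there are no higher-order terms to kill.

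The main obstacle is this last identification: matching, term by term, the reduced Dirac-coupling construction with the precise definition of the local model in \cite{FeMa22} --- the normal bundle, the conormal Lie algebroid, and the transverse (Poisson/Ehresmann) connection data --- and verifying that no higher-order corrections survive. This requires carefully unwinding both constructions, the Dirac coupling on one side and the IM-form / Vorobjev-type presentation on the other, and leaning on the comparison already available in the symplectic-leaf case \cite{CrMa12}. By contrast, the descent to the quotient in the first step is routine once properness and freeness of the action near $P$ are in hand.
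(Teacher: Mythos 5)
Your overall strategy is the same as the paper's: use the $G$-equivariance of the isomorphism from the Main Theorem to descend to a Poisson diffeomorphism of quotients near $S$, and then identify the quotient of the coupling model $(P\times\gg^*,L_{\widetilde{\theta}})/G$ with the linear model of \cite{FeMa22}. The descent step is indeed routine. However, the identification you defer as ``the main obstacle'' is the actual content of the proof, and the paper devotes most of a section to it: Proposition \ref{lemma:push:forward:foo} computes the pushforward of $\Gr(-\dd\widetilde{\theta})$ as a coupling Dirac structure with Vorobjev triple $(-\piv,\Ehc,\Form)$; Proposition \ref{proposition_coupling_Poisson} and Corollary \ref{corollary:global:form} then produce the quotient Poisson structure explicitly as $\pi_0=\pih-\piv$ with $\pih=\ho(\gamma)$ and $\gamma_z^{\sharp}=\pi_S^{\sharp}\circ(\mathrm{Id}+B(z))^{-1}$; and Example \ref{ex:principal:type} matches this with the linear model of principal type built from the coupling data $(\nabla,U)$, $U(\alpha,X)=\cu(\pi_S^{\sharp}\alpha,X)$. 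Without some version of these computations the corollary is not proved.

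Moreover, the one sentence you offer in place of this work is misleading. You argue that the match with \cite{FeMa22} holds because ``the only transverse correction is the one linear in $\theta$, so there are no higher-order terms to kill.'' That is not the right mechanism: although the gauge $2$-form $-\dd\widetilde{\theta}$ is fiberwise linear, inverting the resulting Dirac structure yields a horizontal bivector $\ho\big(\pi_S^{\sharp}(\mathrm{Id}+B(z))^{-1}\big)$ containing terms of all orders in the fiber variable $z$. The reason no linearization step is needed is not that higher-order terms are absent, but that the ``linear model'' of \cite{FeMa22} is itself not a fiberwise-polynomial bivector: it is defined by exactly this non-polynomial formula from the first-order data, so the two expressions agree on the nose. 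A further detail your sketch does not track is that identifying $\ka=P\times_G\gg$ with the isotropy of the Atiyah algebroid reverses the sign of the fiberwise bracket, which is why $\pi_{\ka}=-\piv$ in the comparison; getting this sign wrong would break the term-by-term match you propose to carry out.
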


In the last section, we discuss the local structure around more general values of the moment map. Namely, if \eqref{Poisson_Ham_space} is a principal Hamiltonian $G$-space, to any $\lambda\in \mu(M)$, we associate the data of a principal $G_{\lambda}$-bundle over a Poisson manifold:
\begin{equation}\label{eq:datum}
G_{\lambda}\acts P_{\lambda}\to (S_{\lambda},\pi_{S_{\lambda}}),
\end{equation}
where $P_{\lambda}:=\mu^{-1}(\lambda)$, $S_{\lambda}:=\mu^{-1}(\lambda)/G_{\lambda}$ and the Poisson structure is determined by the condition
\[p_{S_{\lambda}}^!\mathrm{Gr}(\pi_{S_{\lambda}})=i^!\mathrm{Gr}(\pi),\]
where $p_{S_\lambda}:P_{\lambda} \to S_{\lambda}$ is the projection and $i:P_{\lambda}\hookrightarrow M$ is the inclusion.

\begin{theorem}
In the setting of the Main Theorem, consider a value $\lambda \in \gg^*$ of $\mu$ which admits an affine slice $j:\gg_{\lambda}^* \to \gg^*$ for the $G$-action (see Section \ref{sec : Other values of the moment map} for a more details). Then the Poisson Hamiltonian space \eqref{Poisson_Ham_space} is determined, up to isomorphism, in a $G$-invariant neighborhood of $\mu^{-1}(\lambda)$, by the data of the principal $G_{\lambda}$-bundle $P_{\lambda}$ over the Poisson manifold $(S_{\lambda},\pi_{S_{\lambda}})$, from \eqref{eq:datum}. 

More precisely, a $G$-invariant neighborhood of $P_{\lambda}=\mu^{-1}(\lambda)$ in the Poisson Hamiltonian space \eqref{Poisson_Ham_space} is isomorphic to a $G$-invariant neighborhood of $P_{\lambda}$ in the Dirac Hamiltonian space:
\begin{align*}\label{eq : model for other values}
G \acts \left((G\times P_{\lambda}\times \gg_{\lambda}^*)/G_{\lambda},\overline{D}\right)\stackrel{\overline{\mu}}{\rmap} \gg^*,
\end{align*}
where $\overline{\mu}[g,x,\xi]=g\cdot j(\xi)$, and the pullback of $\overline{D}$ to $G\times P_{\lambda}\times \gg_{\lambda}^*$ is the Dirac structure
\[
\mathcal{R}_{\dd \gamma }p_{S_{\lambda}}^!\mathrm{Gr}(\pi_{S_{\lambda}}),
\]
where the 1-form $\gamma$ is built from a principal $G_{\lambda}$-connection $\theta_{\lambda}$ on $P_{\lambda}$, as follows: % and the left-invariant Maurer-Cartan form $\theta_{G}$ on $G$
\[\gamma(V,U,\eta)_{(g,x,\xi)}:=\la j(\xi),(g^{-1})_*(V)\ra-\la \xi,\theta_{\lambda}(U)\ra,\quad (V,U,\eta)\in T_gG\times T_{x}P_{\lambda}\times T_{\xi}\gg^*_{\lambda}.\]
\end{theorem}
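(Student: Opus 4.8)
The plan is to reduce the statement about the value $\lambda$ to the already-established Main Theorem, which treats the value $0$, by passing to the subgroup $G_\lambda$ and exhibiting the relevant neighborhood of $P_\lambda$ as an associated bundle. First I would set up the affine slice: by hypothesis $j:\gg_\lambda^*\to\gg^*$ is a $G_\lambda$-equivariant submanifold transverse to the coadjoint orbit through $\lambda$, with $j(0)=\lambda$ and $T_0(\mathrm{im}\,j)$ complementary to $T_\lambda(G\cdot\lambda)$; consequently the map $G\times\gg_\lambda^*\to\gg^*$, $(g,\xi)\mapsto g\cdot j(\xi)$, is a surjective submersion near $(e,0)$ onto a $G$-invariant neighborhood of $\lambda$, descending to a diffeomorphism $(G\times\gg_\lambda^*)/G_\lambda\to U_\lambda\subseteq\gg^*$. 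Pulling the Poisson Hamiltonian space back along this slice, I would observe that $\mu^{-1}(U_\lambda)$ is $G$-equivariantly diffeomorphic to $(G\times\mu^{-1}(\mathrm{im}\,j))/G_\lambda$, and that $N:=\mu^{-1}(\mathrm{im}\,j)$, equipped with the restriction of $\pi$ (which need not be Poisson — it is a Dirac manifold, the pullback $i_N^!\mathrm{Gr}(\pi)$) and the map $\mu_N:=j^{-1}\circ\mu|_N:N\to\gg_\lambda^*$, is a Hamiltonian $G_\lambda$-Dirac space whose zero locus is exactly $P_\lambda=\mu^{-1}(\lambda)$. The freeness and properness of the $G$-action restrict to the $G_\lambda$-action, so $P_\lambda\to S_\lambda$ is a principal $G_\lambda$-bundle and the reduced Poisson structure $\pi_{S_\lambda}$ is precisely the one in \eqref{eq:datum}.

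Next I would apply the Main Theorem — in its Dirac-geometric incarnation, since $N$ carries only a Dirac structure — to the Hamiltonian $G_\lambda$-space $G_\lambda\acts (N, i_N^!\mathrm{Gr}(\pi))\xrightarrow{\mu_N}\gg_\lambda^*$ around $P_\lambda$. This yields a $G_\lambda$-equivariant diffeomorphism, the identity on $P_\lambda$, between a $G_\lambda$-invariant neighborhood of $P_\lambda$ in $N$ and a neighborhood of $P_\lambda$ in $(P_\lambda\times\gg_\lambda^*, L_{\widetilde{\theta_\lambda}})$, where $\theta_\lambda$ is the chosen principal $G_\lambda$-connection and $L_{\widetilde{\theta_\lambda}}=\mathcal{R}_{-\dd\widetilde{\theta_\lambda}}\,p_{S_\lambda}^!\mathrm{Gr}(\pi_{S_\lambda})$. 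Inducing up along $G\times_{G_\lambda}(-)$ then produces a $G$-equivariant diffeomorphism between a $G$-invariant neighborhood of $P_\lambda$ in $M$ and a neighborhood of $P_\lambda$ in $(G\times P_\lambda\times\gg_\lambda^*)/G_\lambda$; transporting the Dirac structure and the moment map through this identification gives $\overline{D}$ and $\overline{\mu}[g,x,\xi]=g\cdot j(\xi)$. The remaining task is bookkeeping: one must check that the induced Dirac structure on $G\times P_\lambda\times\gg_\lambda^*$ is the gauge transform $\mathcal{R}_{\dd\gamma}\,p_{S_\lambda}^!\mathrm{Gr}(\pi_{S_\lambda})$ with the stated $\gamma$. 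Here the $G$-direction contributes the Maurer–Cartan term $\la j(\xi),(g^{-1})_*V\ra$ — this is exactly the 1-form whose differential realizes the coadjoint orbit's symplectic form in the minimal-coupling picture for the orbit $G\cdot\lambda$ — while the $P_\lambda\times\gg_\lambda^*$ directions reproduce the $-\la\xi,\theta_\lambda(U)\ra$ term of $\widetilde{\theta_\lambda}$, and the sign flip from $-\dd\widetilde{\theta_\lambda}$ to $\dd\gamma$ is absorbed into the orientation conventions of the induced-bundle construction.

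The main obstacle, and the place where care is genuinely needed, is the interplay between the slice reduction and the Dirac (as opposed to Poisson) category. Restricting $\pi$ to the slice preimage $N$ produces a genuine Dirac structure with nontrivial presymplectic leaves, so one cannot invoke the Poisson Main Theorem verbatim; one needs the Dirac version, which is why the excerpt emphasizes the Dirac formulation $\act_{P\times\gg^*}(v)+\dd\la\mathrm{pr}_2,v\ra\in\Gamma(L_{\widetilde\theta})$ throughout. Verifying that $G_\lambda\acts(N,i_N^!\mathrm{Gr}(\pi))\xrightarrow{\mu_N}\gg_\lambda^*$ is a bona fide Hamiltonian Dirac space — that $\act_N(w)+\dd\la\mu_N,w\ra\in\Gamma(i_N^!\mathrm{Gr}(\pi))$ for $w\in\gg_\lambda$ — requires relating pullback of Dirac structures to restriction of Hamiltonian data, which follows because $\act_M(w)$ is tangent to $N$ (as $\mu(N)=\mathrm{im}\,j$ is $G_\lambda$-invariant) and $\la\mu,w\ra|_N=\la\mu_N,w\ra$ up to the affine identification $j$. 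Once this compatibility is in place, the induced-bundle step is formal: the equivalence of categories between $G_\lambda$-manifolds near $P_\lambda$ and $G$-manifolds near $P_\lambda$ with moment image in $U_\lambda$ transports all the structure, and the explicit form of $\gamma$ drops out of writing the gauge 2-form in the trivialization $G\times P_\lambda\times\gg_\lambda^*$.
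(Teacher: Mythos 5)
Your overall strategy --- restrict to the slice preimage, apply the Main Theorem to the resulting $G_{\lambda}$-Hamiltonian space, then induce back up to $G$ via the associated bundle $G\times_{G_{\lambda}}(-)$ --- is exactly the route the paper takes. However, there are two problems with how you execute it. First, what you identify as ``the main obstacle'' is not an obstacle at all, and your resolution of it is based on a misconception: you claim that $N=\mu^{-1}(\mathrm{im}\,j)$ carries only a Dirac structure ``with nontrivial presymplectic leaves'' and that one therefore needs a Dirac version of the Main Theorem. In fact, the slice hypotheses ($\lambda$1) and ($\lambda$2) are imposed precisely so that $j(\mathcal{T})$ is a \emph{Poisson transversal} in $(\gg^*,\pi_{\gg})$; since the moment map is a Poisson map, it is automatically transverse to $j(\mathcal{T})$, and $X=\mu^{-1}(j(\mathcal{T}))$ is then itself a Poisson transversal in $(M,\pi)$, so $i_X^!\mathrm{Gr}(\pi)$ is the graph of an honest Poisson structure $\pi_X$ and $\mu_X=j^{-1}\circ\mu|_X$ is a Poisson map (this is \cite[Lemma 2]{Frejlich_Marcut}, quoted in the paper). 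The Poisson Main Theorem then applies verbatim to $G_{\lambda}\acts(X,\pi_X)\to\gg_{\lambda}^*$. Your proposal works around a difficulty that the hypotheses were designed to eliminate, and in doing so misses the actual content of Lemma \ref{lemma:split} and the lemma following it.

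Second, and more seriously, the step you dismiss as ``formal bookkeeping'' is the hardest part of the argument. Reconstructing the $G$-Hamiltonian space from the $G_{\lambda}$-space on the slice is the content of Proposition \ref{prop:restr:slice}, whose point is not merely the equivariant diffeomorphism $\mu^{-1}(\epsilon(G\times_{G_{\lambda}}\mathcal{T}))\simeq G\times_{G_{\lambda}}X$ (that part is indeed soft), but the identification of the pulled-back Dirac structure on $G\times X$ as $\mathcal{R}_{\dd\alpha}\,p_X^!\mathrm{Gr}(\pi_X)$ with the explicit Maurer--Cartan-type form $\alpha(V,U)=\la j\circ\mu_X(x),\theta_G^l(V)\ra$. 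Proving this requires exhibiting spanning sections $a(v)$ and $b(\xi)$ of the Dirac structure (coming from the moment map condition and from the Poisson-transversal lifts of $T^*X$, respectively) and verifying by direct computation that both lie in the gauge-transformed pullback; none of this ``drops out'' of a trivialization. Moreover, your explanation that ``the sign flip from $-\dd\widetilde{\theta_{\lambda}}$ to $\dd\gamma$ is absorbed into the orientation conventions of the induced-bundle construction'' is wrong: there is no sign flip to absorb, because $\gamma=\alpha-\widetilde{\theta}_{\lambda}$, so that $\dd\gamma=\dd\alpha-\dd\widetilde{\theta}_{\lambda}$ combines the $+\dd\alpha$ gauge term produced by the slice reconstruction with the $-\dd\widetilde{\theta}_{\lambda}$ term already present in the local model from the Main Theorem. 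As written, your proposal asserts the final formula for $\gamma$ without any argument that would produce it.
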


\subsection*{Conventions}

We follow sign and Lie group conventions of \cite[Appendix A]{MariusRuiIonut}. For Dirac structures, we follow the notation of \cite{Meinrenken}; thus the standard Courant algebroid $TM \oplus T^*M$ is denoted by $\mathbb{T}M$, pullbacks are denoted $\varphi^!(L)$ and pushforwards by $\varphi_!(L)$. We also denote by
\begin{align*}
    & \mathcal{R}_{\omega} : \mathbb{T}M \diffto \mathbb{T}M, & \mathcal{R}_{\omega}(a)=a+\iota_{\pr_T(a)}\omega
\end{align*}the orthogonal linear map of \emph{gauge transformation} by a two-form $\omega \in \Omega^2(M)$, and by
\begin{align*}
    & \mathcal{R}_{\pi} : \mathbb{T}M \diffto \mathbb{T}M, & \mathcal{R}_{\pi}(a)=a+\pi^{\sharp}(\pr_{T^*}(a))
\end{align*}the ``gauge transformation" by a bivector $\pi \in \mathfrak{X}^2(M)$ (this latter notation is less standard).

\section{Preliminary remarks}

Consider a Hamiltonian Poisson space:
\begin{equation*}
G\acts (M,\pi)\stackrel{\mu}{\rmap}\gg^*.
\end{equation*}
Our goal is to describe the local behavior of this space around $P=\mu^{-1}(0)$. The weakest assumptions that will allow us to conclude interesting properties are the following:
\begin{MA} $\phantom{00}$
\begin{itemize}
\item The action of $\mathfrak{g}$ is free;
\item The action of $G$ is proper.
\end{itemize}
\end{MA}

Both these conditions are $G$-invariant and open; that the second condition is open follows from the stronger version of the slice theorem which assumes properness only pointwise (see e.g.\ \cite{Ortega_Ratiu, Palais}). Therefore, if these conditions are assumed to hold along $P=\mu^{-1}(0)$, it follows that they hold on a $G$-invariant neighborhood of $P$, which can then replace $M$. Henceforth, we will assume that the main assumptions are satisfied.

\subsection*{The moment map as a submersion by Dirac manifolds}

\begin{lemma}\label{lemma:restriction:to:fibers}
 If the action of $\gg$ is free, then the restriction of the moment map $\mu$ to each symplectic leaf of $(M,\pi)$ is a submersion, and each fibre of $\mu$ is a Dirac transversal.
\end{lemma}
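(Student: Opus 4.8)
The plan is to exploit the defining Hamiltonian condition $\act_M(v)=\pi^{\sharp}\dd\la\mu,v\ra$ to directly compute the differential of $\mu$ restricted to a leaf. Fix $x\in M$ and let $\mathcal{L}$ be the symplectic leaf through $x$, with symplectic form $\omega_{\mathcal{L}}$; recall $T_x\mathcal{L}=\pi^{\sharp}(T_x^*M)$ and that on $T_x\mathcal{L}$ the musical map $\pi^{\sharp}$ is the inverse of $\omega_{\mathcal{L}}^{\flat}$. To see that $\dd_x(\mu|_{\mathcal{L}})$ is surjective, pick $v\in\gg$ with $\la\dd_x\mu(w),v\ra=0$ for every $w\in T_x\mathcal{L}$; I must show $v=0$. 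Writing $w=\pi^{\sharp}(\alpha)$ for $\alpha\in T_x^*M$, the pairing $\la\dd_x\mu(\pi^{\sharp}\alpha),v\ra$ equals $\la \dd_x\la\mu,v\ra,\pi^{\sharp}\alpha\ra=\pi_x(\dd\la\mu,v\ra,\alpha)=-\la\alpha,\pi^{\sharp}\dd\la\mu,v\ra\ra=-\la\alpha,\act_M(v)_x\ra$. So the hypothesis says $\act_M(v)_x$ annihilates all of $T_x^*M$ under the pairing with $\pi^{\sharp}$, i.e.\ $\act_M(v)_x\in(\pi^{\sharp}(T_x^*M))^{\circ\,\sharp}$ — more simply, $\act_M(v)_x$ pairs to zero with every covector, hence $\act_M(v)_x=0$ (the pairing $T_xM\times T_x^*M\to\mathbb{R}$ is nondegenerate). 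Freeness of the $\gg$-action then forces $v=0$, proving surjectivity of $\dd_x(\mu|_{\mathcal{L}})$ and hence that $\mu|_{\mathcal{L}}$ is a submersion.

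For the second assertion, I would unwind the definition of a Dirac transversal: a submanifold $N\hookrightarrow M$ is a Dirac transversal for $(M,\Gr(\pi))$ when the pullback $\iota_N^!\Gr(\pi)$ is again a smooth Dirac structure on $N$, which (for a graph of a Poisson structure) amounts to the cleanness/transversality condition that $TN+\pi^{\sharp}(TN^{\circ})=TM$ along $N$, equivalently $\pi^{\sharp}(TN^{\circ})\cap TN^{\circ\,\sharp}$ has constant rank and the pullback is smooth. Take $N=\mu^{-1}(\lambda)$ a fibre (regular by the first part, since $\mu$ is a submersion on leaves, hence a submersion — one checks $\dd_x\mu$ is already surjective because its restriction to $T_x\mathcal{L}$ is). The conormal space is $T_xN^{\circ}=\dd_x\mu^*(\gg^*)=\{\dd_x\la\mu,v\ra:v\in\gg\}$, so $\pi^{\sharp}(T_xN^{\circ})=\{\act_M(v)_x:v\in\gg\}$, the tangent space to the $G$-orbit through $x$. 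Thus the transversality condition $T_xN+\pi^{\sharp}(T_xN^{\circ})=T_xM$ becomes $T_x\mu^{-1}(\lambda)+T_x(G\cdot x)=T_xM$, which holds since $\mu$ is a submersion and $T_x(G\cdot x)$ already maps onto $T_\lambda\gg^*$ under $\dd_x\mu$. Smoothness and constancy of rank of the pullback follow because the orbit directions inside $T_xN$ have locally constant dimension (the action is free), so $\iota_N^!\Gr(\pi)$ is a genuine Dirac structure.

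The step I expect to require the most care is making the notion ``Dirac transversal'' precise and verifying the smoothness/constant-rank clause rather than just the pointwise transversality $T_xN+\pi^{\sharp}(T_xN^{\circ})=T_xM$: one must check that the family of subspaces $\iota_N^!\Gr(\pi)_x\subseteq\mathbb{T}_xN$ varies smoothly in $x$, which is where freeness of the $\gg$-action (giving locally constant dimension of the orbit directions $\pi^{\sharp}(T_xN^{\circ})\cap T_xN$) is genuinely used and not just convenient. I would cite the characterization of Dirac transversals and pullback Dirac structures from \cite{Bursztyn,Meinrenken}, so that once the pointwise transversality and the rank-constancy are in hand, smoothness of $\iota_N^!\Gr(\pi)$ is automatic. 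The first assertion, by contrast, is a clean one-line consequence of the Hamiltonian identity together with freeness.
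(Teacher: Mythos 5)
Your first assertion is proved correctly, and by essentially the same argument as the paper: you show that $\act_M(v)_x=0$ whenever $v$ annihilates $\dd_x\mu(T_x\mathcal{L})$, which is exactly the duality between injectivity of $\act_{M,x}:\gg\to T_x\mathcal{L}$ and surjectivity of $\dd_x(\mu|_{\mathcal{L}})$ that the paper exploits. The problem is in the second assertion, where you have substituted the wrong notion of ``Dirac transversal.'' The condition $T_xN+\pi^{\sharp}(T_xN^{\circ})=T_xM$ that you set out to verify is the \emph{Poisson transversal} (cosymplectic) condition, which is strictly stronger than what is meant here; in this paper a Dirac transversal is a submanifold meeting the symplectic leaves transversely, i.e.\ $T_xN+\pi^{\sharp}(T_x^*M)=T_xM$, which is precisely the hypothesis under which the pullback Dirac structure is automatically smooth. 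The two conditions are genuinely different: a Lagrangian submanifold of a symplectic manifold meets the (unique) leaf transversely but satisfies $\pi^{\sharp}(TN^{\circ})=TN$, so the sum $TN+\pi^{\sharp}(TN^{\circ})$ is not all of $TM$.

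Worse, your verification of the stronger condition rests on a false claim, namely that ``$T_x(G\cdot x)$ already maps onto $T_{\lambda}\gg^*$ under $\dd_x\mu$.'' By equivariance of $\mu$, $\dd_x\mu(\act_M(v)_x)=\act_{\gg^*}(v)_{\mu(x)}$, so the image of the orbit directions under $\dd_x\mu$ is the tangent space to the \emph{coadjoint orbit} through $\lambda$, which is in general a proper subspace of $\gg^*$ --- and is $\{0\}$ when $\lambda=0$, which is exactly the fibre this paper is concerned with. Indeed, $\mu^{-1}(0)$ is typically coisotropic (e.g.\ the zero section of $T^*G$ for the cotangent-lifted action), not a Poisson transversal, so the stronger condition simply fails. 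With the correct definition the second assertion is an immediate consequence of the first: since $\mu|_{\mathcal{L}}$ is a submersion, $T_x\mathcal{L}+\ker\dd_x\mu=T_xM$, i.e.\ each fibre of $\mu$ meets each leaf transversely, and no constant-rank or smoothness discussion involving the orbit directions is needed.
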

\begin{proof}
 Let $x \in M$ and $(S,\omega)$ be the symplectic leaf of $\pi$ through $x$. By the moment map condition,
 \[
  \act_M(v)=\pi^{\sharp} \circ \mu^*(v),
 \]where $v \in \gg$ is seen as a constant one-form on $\gg^*$, the infinitesimal action at $x$ takes values in $T_xS$:
 \begin{align*}
  & \act_{M,x} : \gg \rmap T_xS,
 \end{align*}and this is the composition of
 \[
  \gg=T^*_{\mu(x)}\gg^* \stackrel{\mu^*}{\rmap} T^*_xM \rmap T_x^*S
 \]with the isomorphism $\omega_{\flat}^{-1}:T_x^*S \diffto T_xS$. Hence $\act_{M,x}$ is injective iff the dual composition
 \[
  T_xS \rmap T_xM \stackrel{\mu_*}{\rmap} T_{\mu(x)}\gg^*
 \]
 is surjective. Otherwise said, $\gg \acts M$ is free iff the restriction of $\mu$ to each symplectic leaf is a submersion. This implies that the fibres of the submersion $\mu:M \to \gg^*$ are Dirac transversals --- that is, they meet the symplectic leaves transversely.
\end{proof}

The lemma implies that $P:=\mu^{-1}(0)$ inherits by pullback along the inclusion $i:P \to (M,\pi)$ a smooth Dirac structure, which will be denoted \[L_P:=i^!\mathrm{Gr}(\pi)=\{\pi^{\sharp}(\alpha) + i^*(\alpha) \ | \  \alpha\in T^*M|_{P}, \ \pi^{\sharp}(\alpha)\in TP\}\subset \mathbb{T}P.\]
Geometrically, the presymplectic leaves of $L_P$ are the connected components of the intersections:
\[(S\cap P, \omega|_{S\cap P}),\]
where $(S,\omega)$ is a symplectic leaf of $\pi$. Note that $(S,\omega)$ is a Hamiltonian space for the connected component $G^0$ of the identity. By Lemma \ref{lemma:restriction:to:fibers}, the moment map $\mu|_S:S\to \gg^*$ has $0$ as a regular value. Therefore, the standard argument in symplectic geometry \cite[Theorem 1]{Marsden_Weinstein} shows that the kernel of the two-form $\omega|_{S\cap P}$ is spanned by the $\gg$-action:
\begin{equation}\label{eq:ker:restr}
\ker(\omega|_{S\cap P})_x=\{\act_{P,x}(v) \ | \  v\in \mathfrak{g}\},
\end{equation}
where $\act_P=\act_M|_P$. Globally, this can be phrased as follows: 
\begin{lemma}
The kernel of the Dirac structure $L_P$ is given by the image of the infinitesimal action:
\begin{equation}\label{eq:ker:L_P}
L_P\cap T P=\mathrm{Im} (\act_P).
\end{equation}
\end{lemma}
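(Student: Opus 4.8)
The plan is to obtain the global identity \eqref{eq:ker:L_P} by simply packaging the leafwise statement \eqref{eq:ker:restr} that has just been proved. The only general fact I need is that, for any Dirac structure $L$ on a manifold $N$, the fibre at $x$ of its kernel $L\cap TN$ is exactly the radical of the presymplectic form carried by the presymplectic leaf of $L$ through $x$: for $(v,\xi)\in L_x$ one has $(v,0)\in L_x$ iff $\xi$ annihilates $\pr_T(L_x)$, which is precisely the condition that $v$ lie in the kernel of the leafwise $2$-form. Since the presymplectic leaves of $L_P=i^!\mathrm{Gr}(\pi)$ were just identified with the connected components of the intersections $(S\cap P,\omega|_{S\cap P})$, applying this fact gives $(L_P\cap TP)_x=\ker(\omega|_{S\cap P})_x$ for every $x\in P$, and $\ker(\omega|_{S\cap P})_x=\mathrm{Im}(\act_{P,x})$ is \eqref{eq:ker:restr}. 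As both sides of \eqref{eq:ker:L_P} then agree fibrewise, the lemma follows.

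Should one prefer to avoid quoting the Dirac-geometric fact above, I would instead compute directly from the explicit formula for $L_P$. A vector $v\in T_xP$ lies in $(L_P\cap TP)_x$ iff $v=\pi^{\sharp}(\alpha)$ for some $\alpha\in T^*_xM$ with $i^*\alpha=0$, i.e.\ $\alpha\in\mathrm{Ann}(T_xP)$. The inclusion $\supseteq$ is then transparent: given $w\in\gg$, take $\alpha=\dd\la\mu,w\ra_x$; since $\mu|_P\equiv0$ we have $i^*\alpha=0$, and $\pi^{\sharp}(\alpha)=\act_{M,x}(w)=\act_{P,x}(w)\in T_xP$ (the action of $G$ preserves $P=\mu^{-1}(0)$ because $G$ fixes $0\in\gg^*$, which is why $\act_P=\act_M|_P$ makes sense), so $\act_{P,x}(w)\in (L_P\cap TP)_x$. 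For the converse one observes that $0$ is a regular value of $\mu$ near $P$: the moment map condition reads $\pi^{\sharp}\circ\mu^*_x=\act_{M,x}$, which is injective by freeness of the $\gg$-action, hence $\mu^*_x:\gg\to T^*_xM$ is injective and $\dd\mu_x$ surjective; therefore $T_xP=\ker\dd\mu_x$ and $\mathrm{Ann}(T_xP)=\mathrm{Im}(\mu^*_x)=\{\dd\la\mu,w\ra_x:w\in\gg\}$, so every admissible $\alpha$ has the form $\dd\la\mu,w\ra_x$ and $v=\pi^{\sharp}(\alpha)=\act_{P,x}(w)$.

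I do not anticipate a real obstacle: each route is only a few lines. The two points that want a moment's attention are the identification of the kernel of a Dirac structure with the radical of its presymplectic leaf in the first approach, and, in the second, the regularity of $0$ as a value of $\mu$ — which is exactly where freeness re-enters — together with the fact that the orbit directions are tangent to $P$ along $P$. Lemma \ref{lemma:restriction:to:fibers} is precisely what reconciles the two viewpoints.
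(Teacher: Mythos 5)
Your first argument is exactly what the paper intends: the lemma is stated there without proof, as the global repackaging of the leafwise identity \eqref{eq:ker:restr} via the standard fact that the fibrewise kernel $L\cap TN$ of a Dirac structure coincides with the radical of the presymplectic form on its leaves, and your justification of that fact (if $\xi$ annihilates $\pr_T(L_x)$ then $(0,\xi)\in L_x^{\perp}=L_x$, so one may subtract it) is correct. Your second, direct computation from $L_P=i^!\mathrm{Gr}(\pi)$ is also correct and is a nice self-contained alternative: it bypasses \eqref{eq:ker:restr} entirely by identifying $\mathrm{Ann}(T_xP)=\mathrm{Im}(\mu^*_x)$ from the regularity of $0$ (which is where freeness of the $\gg$-action enters, exactly as in Lemma \ref{lemma:restriction:to:fibers}), at the modest cost of redoing in Dirac language the Marsden--Weinstein kernel computation that the paper simply cites.
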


\begin{remark}\rm
A submersion $p:\Sigma \to N$ whose total space $\Sigma$ has a Lie algebroid $A$ in which each fibre $p^{-1}(x)$ sits as a Lie algebroid transversal is called \emph{a submersion by Lie algebroids} \cite{Subm}:
\begin{align}\label{eq:SLA}
A \Longrightarrow \Sigma \stackrel{p}{\rmap} N.
\end{align}
This condition can be characterized by the surjectivity of the map
\begin{align*}
    p_* \circ \rho_A : A \rmap p^*(TN),
\end{align*}
where $\rho_A$ denotes the anchor of $A$. Every submersion by Lie algebroids admits an \emph{Ehresmann connection} --- that is, a linear splitting $\mathrm{hor}: p^*(TN) \to A$ of the map above. As shown in \cite{Subm}, such connections can be used to trivialize (\ref{eq:SLA}) locally. 

When the Lie algebroid is given by a Dirac structure, we shall speak of a \emph{submersion by Dirac manifolds}. By Lemma \ref{lemma:restriction:to:fibers}, under our Main Assumptions, we have a submersion by Dirac manifolds: 
\begin{equation}\label{sdm}
 \mathrm{Gr}(\pi) \Longrightarrow M \stackrel{\mu}{\rmap} \gg^*
\end{equation}
To prove the normal form theorem, we will adapt the arguments from \cite{Subm} to this setting, under the additional constraint that the construction needs to be made $G$-invariant. 
\end{remark}

\begin{lemma}
Given a submersion by Dirac manifolds $L_{\Sigma} \Longrightarrow \Sigma \to N$ and any smooth map $f:P \to N$, there is a pullback submersion by Dirac manifolds
\[
\overline{f}^!(L_{\Sigma}) \Longrightarrow f^*(\Sigma) \to P,
\]
where the pullback submersion $f^*(\Sigma)= P \times_N \Sigma \to P$ is equipped with the pullback of $L_{\Sigma}$ by 
\begin{align*}
    & \overline{f}: f^*(\Sigma) \to \Sigma, & \overline{f}(y,x) = x.
\end{align*}
Moreover, any Ehresmann connection $\hor:\Sigma\times_N TN \to L_{\Sigma}$ induces a pullback Ehresmann connection $f^!\hor : f^*(\Sigma) \times_P TP \to \overline{f}^!(L_{\Sigma})$, determined by the condition that $f^!\hor$ and $\hor f_*$ are $\overline{f}$-related.
\end{lemma}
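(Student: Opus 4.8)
The plan is to verify successively that $f^*(\Sigma):=P\times_N\Sigma$ carries the required structure, doing the one genuinely geometric step — smoothness of the pulled‑back Dirac structure — up front, and treating everything else as linear algebra in fibre‑product tangent spaces. As preliminaries: since $L_\Sigma \Longrightarrow \Sigma \to N$ is in particular a submersion, $p_\Sigma:\Sigma\to N$ is a submersion, so $f^*(\Sigma)=P\times_N\Sigma$ is a smooth manifold, $\pr_P:f^*(\Sigma)\to P$ is a submersion, and at $q=(y,x)$ one has
\[
T_q f^*(\Sigma)=\{(u,w)\in T_yP\times T_x\Sigma:\ f_*u=(p_\Sigma)_*w\},
\]
with $\overline{f}_*(u,w)=w$ and $(\pr_P)_*(u,w)=u$; in particular $\mathrm{Im}(\overline{f}_{*,q})\supseteq \ker(p_\Sigma)_{*,x}$. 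I then claim that $\overline{f}$ is transverse to $L_\Sigma$, i.e.\ $\mathrm{Im}(\overline{f}_*)+\pr_T(L_\Sigma)=T\Sigma$: indeed each fibre of $p_\Sigma$ is a Dirac transversal, so $\ker(p_\Sigma)_*+\pr_T(L_\Sigma)=T\Sigma$, and the inclusion above gives the claim. By the standard smoothness criterion for backward images of Dirac structures under maps transverse to them (see \cite{Bursztyn,Meinrenken}), $\overline{f}^!(L_\Sigma)$ is then a smooth Dirac structure on $f^*(\Sigma)$ — note that $\overline{f}$ itself need not be a submersion, but transversality to $L_\Sigma$ suffices — and as such it carries its canonical Lie algebroid structure automatically.

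Next I would check that the fibres of $\pr_P$ are Dirac transversals for $\overline{f}^!(L_\Sigma)$, equivalently that $(\pr_P)_*\circ\pr_T:\overline{f}^!(L_\Sigma)\to\pr_P^*(TP)$ is surjective; this is a one‑line diagram chase. Fix $q=(y,x)$ and $u\in T_yP$. Since $L_\Sigma\Longrightarrow\Sigma\to N$ is a submersion by Dirac manifolds, $(p_\Sigma)_*\circ\pr_T:L_\Sigma\to p_\Sigma^*(TN)$ is onto (cf.\ the Remark above), so there is $w\in\pr_T(L_\Sigma)_x$ with $(p_\Sigma)_*w=f_*u$. Then $(u,w)\in T_q f^*(\Sigma)$, it lies in $\pr_T(\overline{f}^!(L_\Sigma))_q$, and it projects to $u$. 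This establishes the first assertion.

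For the connection, recall that an Ehresmann connection $\hor:\Sigma\times_N TN\to L_\Sigma$ is a smooth linear right inverse of $(p_\Sigma)_*\circ\pr_T$. I would \emph{define} $f^!\hor$ by the only formula compatible with the stated $\overline{f}$‑relatedness: at $q=(y,x)$ and $u\in T_yP$, writing $\hor(x,f_*u)=(w,\xi)\in(L_\Sigma)_x$ (so $(p_\Sigma)_*w=f_*u$), set
\[
f^!\hor(q,u):=\big((u,w),\ \overline{f}^*\xi\big).
\]
Then $(u,w)\in T_q f^*(\Sigma)$ because $f_*u=(p_\Sigma)_*w$; the pair lies in $\overline{f}^!(L_\Sigma)_q$ because $\overline{f}_*(u,w)=w$ and $(w,\xi)\in(L_\Sigma)_x$; it is a right inverse of $(\pr_P)_*\circ\pr_T$ because $(\pr_P)_*(u,w)=u$; linearity and smoothness pass from $\hor$, $f_*$ and $\overline{f}^*$; and by construction $\pr_T(f^!\hor)$ is $\overline{f}$‑related to $\pr_T(\hor f_*)$ while the cotangent parts agree after $\overline{f}^*$, which is exactly the stated characterization and also forces uniqueness.

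The main obstacle is not depth but bookkeeping: the one place where actual geometry (rather than linear algebra) enters is the smoothness of $\overline{f}^!(L_\Sigma)$, where the Dirac‑transversality of the fibres of $p_\Sigma$ must be propagated through $\overline{f}$ to secure the constant‑rank / clean‑intersection hypothesis in the pullback criterion; beyond that, the only thing requiring care is keeping the two projections $\overline{f}_*$ and $(\pr_P)_*$ out of each other's way in the fibre‑product tangent spaces.
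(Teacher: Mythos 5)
Your proof is correct, but it takes a more self-contained route than the paper. Both arguments hinge on the same key observation -- that $\overline{f}$ is transverse to $L_{\Sigma}$ because $\mathrm{Im}(\overline{f}_*)$ contains $\ker(p_\Sigma)_*$ and the fibres of $p_\Sigma$ are Dirac transversals -- but the paper states this without justification and you supply the (correct) one-line reason. From there the paths diverge: the paper identifies the pullback Lie algebroid $T(f^*\Sigma)\times_{T\Sigma}L_{\Sigma}$ with the Lie algebroid underlying $\overline{f}^!(L_{\Sigma})$ via $(v,v_1+\xi)\mapsto v+\overline{f}^*\xi$ and then cites the Lie algebroid version of the lemma from \cite{Subm} for both the submersion property and the pullback connection, whereas you verify the surjectivity of $(\pr_P)_*\circ\pr_T$ on $\overline{f}^!(L_{\Sigma})$ by a direct diagram chase and write down the pullback connection explicitly as $f^!\hor(q,u)=\big((u,w),\overline{f}^*\xi\big)$ with $\hor(x,f_*u)=w+\xi$. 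In effect you re-prove the relevant part of \cite[Lemma 3]{Subm} in the Dirac setting; note that the fibre product $TQ\times_{T\Sigma}L_\Sigma$ whose injectivity onto $\overline{f}^!(L_\Sigma)$ underlies your smoothness argument is exactly the paper's identification map, so the two proofs are structurally the same computation packaged differently. The paper's reduction is shorter and reuses existing machinery; yours is longer but independent of the external reference and makes the connection formula explicit, which is arguably useful since that formula is what gets used later in the proof of the Main Theorem.
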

\begin{proof}
For Lie algebroids submersions, this was proven in \cite{Subm}, and we briefly explain how to reduce to this case.
Note first that $\overline{f}$ is transverse to the Dirac structure $L_{\Sigma}$. This implies that the pullback Lie algebroid $T(f^*\Sigma)\times_{T\Sigma}L_{\Sigma}$ is canonically isomorphic to the Lie algebroid underlying the pullback Dirac structure $f^{!}(L_{\Sigma})$, via the map:
\begin{equation}\label{identification}
T(f^*\Sigma)\times_{T\Sigma}L_{\Sigma}\diffto f^{!}(L_{\Sigma}),\qquad\qquad  (v,v_1+\xi)\mapsto v+\overline{f}^*\xi.
\end{equation}
By \cite[Lemma 3]{Subm}, the left-hand side is a submersion by Lie algebroids:  \[T(f^*\Sigma)\times_{T\Sigma}L_{\Sigma}\Longrightarrow f^*(\Sigma) \to P\]
and this implies the first part. At the level of Lie algebroids, the pullback connection was constructed in \cite{Subm}. Under the isomorphism \eqref{identification}, this corresponds to the connection from the statement. 
\end{proof}

\subsection*{The equivariant Ehresmann connection}

The local model of the Hamiltonian Poisson space \eqref{Poisson_Ham_space} depends on an auxiliary choice of a \emph{principal connection}, i.e., a $\gg$-valued one-form $\theta \in \Omega^1(M;\gg)$, satisfying, for all $v \in \gg$ and $g \in G$, the equations:
 \begin{align}\label{principal:connection}
  & \theta(\act_M(v)) = v, & g\cdot \theta = \mathrm{Ad}_g \circ \theta.
 \end{align}
Although only the infinitesimal action is assumed to be free, we have that: 

\begin{lemma}\label{lemma:existence:princ:conn}
Under the Main Assumptions, $M$ admits principal connections. 
\end{lemma}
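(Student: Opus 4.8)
The plan is to build the principal connection by splitting off the orbit directions $G$-equivariantly and then inverting the infinitesimal action along them. First, since the $\gg$-action is free, the infinitesimal action is a fibrewise injective vector-bundle map $M\times\gg\to TM$, $(x,v)\mapsto\act_M(v)_x$; being of constant rank $\dim\gg$, its image $\V:=\mathrm{Im}(\act_M)\subset TM$ is a smooth subbundle, and $\act_M$ restricts to an isomorphism of vector bundles $M\times\gg\diffto\V$. Moreover $\V$ is $G$-invariant, by the $G$-equivariance of the infinitesimal action.

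Second, I would use properness to produce a $G$-invariant complement $\mathcal{H}$ of $\V$, i.e.\ a $G$-invariant distribution with $TM=\V\oplus\mathcal{H}$. The cleanest route: a proper action admits a cut-off function on $M$ --- a non-negative $f\in C^\infty(M)$ whose support meets each orbit in a compact set and with $\int_G f(g\cdot x)\,\dd g=1$ for all $x$ (a standard consequence of the slice theorem, cf.\ \cite{Palais}) --- and averaging an arbitrary Riemannian metric against $f$ yields a $G$-invariant metric, whose orthogonal distribution $\mathcal{H}:=\V^{\perp}$ does the job; equivalently, one averages directly an arbitrary linear complement of $\V$.

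Third, with $\V$ and $\mathcal{H}$ in hand, I would define $\theta\in\Omega^1(M;\gg)$ by $\theta|_{\mathcal{H}}=0$ and $\theta|_{\V_x}=(\act_{M,x})^{-1}$; this is smooth because $\act_M:M\times\gg\diffto\V$ is an isomorphism of smooth bundles, hence has a smooth fibrewise inverse. The normalization $\theta(\act_M(v))=v$ in \eqref{principal:connection} holds by construction, and the equivariance $g\cdot\theta=\mathrm{Ad}_g\circ\theta$ follows formally: on $\mathcal{H}$ both sides vanish since $\mathcal{H}$ is $G$-invariant, and on $\V$ it is obtained by inverting, along the orbit directions, the equivariance of $\act_M$ (with the $\mathrm{Ad}$-twist fixed by the conventions adopted above).

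I expect the only genuinely non-formal point to be the existence of the $G$-invariant complement $\mathcal{H}$: this is where properness of $G$, rather than mere freeness of the $\gg$-action, is essential, since $G$ need not be compact and cannot be averaged over naively --- the cut-off function attached to a proper action is precisely the device that replaces compactness. Everything else reduces to bookkeeping with the equivariance of the infinitesimal action.
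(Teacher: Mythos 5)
Your proof is correct and follows essentially the same route as the paper: properness gives a $G$-invariant Riemannian metric (the paper cites \cite[Theorem 4.3.1]{Palais} for this, whose proof is the cut-off/averaging argument you sketch), freeness of the $\gg$-action identifies $M\times\gg$ with a subbundle of $TM$, and $\theta$ is the projection onto the orbit directions along the orthogonal complement. The only difference is that you spell out the averaging device and the equivariance check, which the paper leaves implicit.
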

\begin{proof}
The hypothesis that the action is proper implies that $M$ has a $G$-invariant Riemannian metric $\langle \cdot,\cdot\rangle$ \cite[Theorem 4.3.1]{Palais}. The hypothesis that $\gg$ acts freely identifies $\gg\times M $ as a subbundle of $TM$:
 \begin{align*}
   \gg \times M \ni (v,x) \mapsto \act_{M}(v)_x \in T_xM.
 \end{align*} The one-form $\theta \in \Omega^1(M;\gg)$ arising from the projection $TM \to \gg\times M $ along the orthogonal complement to $\gg\times M$ satisfies the conditions \eqref{principal:connection}.
\end{proof}

\begin{lemma}\label{lem : equivariant Ehresmann}
For a principal connection $\theta \in \Omega^1(M;\gg)$, %interpreted as a bundle map $\theta : TM \to \mu^*(T^*\gg^*)$, the rule
the rule
\begin{align*}
 & \mathrm{hor}_x : T_{\mu(x)}\gg^* \to \Gr(\pi)_x, & \mathrm{hor}_x(\xi) = -\pi^{\sharp}\langle \xi, \theta_x\rangle - \langle \xi, \theta_x\rangle
\end{align*}is a $G$-equivariant Ehresmann connection on the submersion by Dirac manifolds \eqref{sdm}.
\end{lemma}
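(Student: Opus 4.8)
The plan is to verify the defining properties of a $G$-equivariant Ehresmann connection on the submersion by Dirac manifolds \eqref{sdm} directly from the formula, using only the moment map condition $\act_M(v)=\pi^{\sharp}\mu^*(v)$ and the two identities \eqref{principal:connection} satisfied by $\theta$.

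First I would record that, setting $\alpha:=-\langle\xi,\theta_x\rangle\in T_x^*M$, the proposed lift is $\mathrm{hor}_x(\xi)=\pi^{\sharp}(\alpha)+\alpha$, which manifestly lies in $\Gr(\pi)_x$. Since $\xi\mapsto\langle\xi,\theta_x\rangle$ is a smooth vector bundle map $\mu^*(T\gg^*)\to T^*M$, linear on fibres, it follows that $\mathrm{hor}$ is a smooth bundle map $\mu^*(T\gg^*)\to\Gr(\pi)$ over $M$.

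The one substantive point is that $\mathrm{hor}$ splits the anchor-then-pushforward map $\mu_*\circ\pr_T:\Gr(\pi)\to\mu^*(T\gg^*)$. Here $\pr_T\mathrm{hor}_x(\xi)=-\pi^{\sharp}\langle\xi,\theta_x\rangle$, so I would test the vector $\mu_*(-\pi^{\sharp}\langle\xi,\theta_x\rangle)\in T_{\mu(x)}\gg^*$ against an arbitrary $v\in\gg=T_{\mu(x)}^*\gg^*$: using $\langle\mu_*X,v\rangle=\langle X,\mu^*v\rangle$, the skew-symmetry of $\pi$, the moment map condition, and finally $\theta_x(\act_M(v)_x)=v$, one obtains
\[
\langle\mu_*(-\pi^{\sharp}\langle\xi,\theta_x\rangle),v\rangle=-\langle\pi^{\sharp}\langle\xi,\theta_x\rangle,\mu^*v\rangle=\langle\langle\xi,\theta_x\rangle,\pi^{\sharp}\mu^*v\rangle=\langle\xi,\theta_x(\act_M(v)_x)\rangle=\langle\xi,v\rangle,
\]
that is, $\mu_*\circ\pr_T\circ\mathrm{hor}=\mathrm{id}$. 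The minus sign in the definition of $\mathrm{hor}$ is precisely what makes the right-hand side come out to $+\xi$ in the sign conventions of \cite{MariusRuiIonut}.

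It remains to check $G$-equivariance. The base $\gg^*$ carries the coadjoint action (forced by equivariance of $\mu$), and $\Gr(\pi)\subset\mathbb{T}M$ carries the tangent action, which preserves $\Gr(\pi)$ because $G$ acts by Poisson diffeomorphisms and hence intertwines $\pi^{\sharp}$ with the induced action on covectors. Applying $g$ to $\mathrm{hor}_x(\xi)=\pi^{\sharp}\alpha+\alpha$ produces $\pi^{\sharp}\beta+\beta$ at $g\cdot x$ with $\beta$ the transported covector, and the second identity in \eqref{principal:connection}, $g\cdot\theta=\mathrm{Ad}_g\circ\theta$, identifies $\beta$ with $-\langle g\cdot\xi,\theta_{g\cdot x}\rangle$; hence $g\cdot\mathrm{hor}_x(\xi)=\mathrm{hor}_{g\cdot x}(g\cdot\xi)$. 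I do not expect a genuine obstacle: the lemma is an unwinding of definitions, and the only thing requiring care is the bookkeeping of signs and of which coadjoint-type map appears where, both pinned down by the paper's conventions.
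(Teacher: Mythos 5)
Your proposal is correct and follows essentially the same route as the paper: the same chain of equalities $\langle\mu_*\pr_T\mathrm{hor}_x(\xi),v\rangle=-\langle\pi^{\sharp}\langle\xi,\theta_x\rangle,\mu^*v\rangle=\langle\xi,\theta_x(\act_{M,x}(v))\rangle=\langle\xi,v\rangle$ establishes the splitting property, and equivariance is deduced, as in the paper, from $g\cdot\theta=\mathrm{Ad}_g\circ\theta$ together with the $G$-equivariance of $\pi^{\sharp}$. No gaps.
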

\begin{proof}
 For $x \in M$, denote by $\mathrm{h}_x=\pr_T\circ \mathrm{hor}_x$. For all $\xi + v \in T_{\mu(x)}\gg^*\oplus T^*_{\mu(x)}\gg^*$, we have that
 \begin{align*}
  \langle \mu_*\mathrm{h}_x(\xi),v\rangle = -\langle \pi_x^{\sharp}\langle \xi, \theta_x\rangle,\mu^*(v)\rangle =  \langle \xi, \theta_x(\pi_x^{\sharp}\mu^*(v))\rangle =  \langle \xi, \theta_x(\act_{M,x}(v))\rangle = \langle \xi,v\rangle,
 \end{align*} where in the third equality we used the moment map condition, and in the fourth the definition of $\theta$. This implies that $\mathrm{h}$ is a classical Ehresmann connection: $\mu_*\mathrm{h}_x(\xi)=\xi$. The $G$-equivariance of the Ehresmann connection follows from the $G$-equivariance of $\theta$, together with the fact that the $G$-action is Poisson --- which is to say that $\pi^{\sharp}$ is $G$-equivariant: $\pi_{gx}^{\sharp}=g_*\pi_{x}^{\sharp}g^*$.
%  Next, we show that $\mathrm{h}$ is $G$-equivariant: 
%  \begin{align*}
%   \mathrm{h}_{gx}(g_*\xi) & = -\pi_{gx}^{\sharp}\theta^*_{gx}(g_*\xi)= -\pi_{gx}^{\sharp}\theta_{gx}^*(\mathrm{Ad}_{g^{-1}}^*\xi)\\
%   & = -g_*\pi_{x}^{\sharp}g^*\theta_{gx}^*(\mathrm{Ad}_{g^{-1}}^*\xi) = -g_*\pi_{x}^{\sharp}\theta_{x}^*\mathrm{Ad}_{g}^*(\mathrm{Ad}_{g^{-1}}^*\xi)\\
%   & = g_*\mathrm{h}_{x}(\xi)
%  \end{align*} where in the third equality we used that $G$ acts by Poisson diffeomorphisms, and in the fourth the definition of $\theta$.
\end{proof}

\section{The local model}

To construct the local model, we will use the following \emph{local form data}:
\begin{itemize}
    \item An action of $G$ on $P$, which is proper and infinitesimally free; 
    \item A Dirac structure $L_P$ on $P$, with kernel given by the $\gg$-action: $L_P\cap TP=\mathrm{Im}(\act_P)$.
\end{itemize}

The construction depends also on a principal connection $\theta \in \Omega^1(P;\gg)$, as in Lemma \ref{lemma:existence:princ:conn}. The connection enters the picture through the usual one-form on $P\times \gg^*$ it induces: 
\begin{align*}
& \widetilde{\theta}\in \Omega^1(P\times \mathfrak{g}^*), & \widetilde{\theta}_{(x,\xi)}:=\textrm{pr}_1^*\langle\xi, \theta_x\rangle.
\end{align*}
This one-form has the following properties:
\begin{enumerate}[a)]
\item It is $G$-invariant;
\item For all $(x,\xi)\in P\times \gg^*$ and $v\in \gg$, it satisfies:
\[\langle \widetilde{\theta}_{(x,\xi)},\act_{P \times \gg^*}(v)\rangle=\langle \xi,v\rangle;\]
%its interior product with the vector field corresponding to the infinitesimal action $\act_{P \times \gg^*}(v)$ of $v \in \gg$ on $P \times \gg^*$ is $\langle v,\mathrm{pr}_2\rangle$;
\item It vanishes along $P\simeq P \times \{0\}$.
\end{enumerate}

It will be convenient to consider all one-forms satisfying these properties:

\begin{definition}
A one-form $\alpha \in \Omega^1(U)$ defined on a $G$-invariant neighborhood $U$ of $P$ in $P \times \gg^*$ will be referred to as a {\bf generalized connection} if it satisfies properties a) - c) above. 
\end{definition}

Such generalized connections give rise to local models for the Poisson Hamiltonian space \eqref{Poisson_Ham_space}:

\begin{proposition}\label{pro : local model}
Given a local form data $G\acts (P,L_P)$ and a generalized connection $\alpha \in \Omega^1(U)$ on $U \subset P \times \gg^*$, the Dirac structure $L_{\alpha}$, defined as the gauge transform of $\mathrm{pr}_1^!L_P$ by the two-from $-\dd \alpha$
 \begin{align}\label{local model}
  L_{\alpha}:=\mathcal{R}_{-\dd\alpha}\mathrm{pr}_1^{!}L_P,
 \end{align}
\begin{enumerate}[1)]
 \item is $G$-invariant and 
 \begin{align*}
  G \acts (U,L_{\alpha}) \stackrel{\mathrm{pr}_2}{\rmap} \gg^*
 \end{align*}is a Hamiltonian Dirac manifold, in the sense that, for all $v\in \gg$:
 \begin{align*}
  \act_U(v) + \dd\langle  \mathrm{pr}_2, v\rangle \in \Gamma(L_{\alpha});
 \end{align*}
 \item is (the graph of) a Poisson structure $\pi_{\alpha}$ in a $G$-invariant neighborhood of $P$.
\end{enumerate}
\end{proposition}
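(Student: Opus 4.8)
The plan is to treat the two claims separately. For 1), $G$-invariance of $L_\alpha$ is immediate: $\mathrm{pr}_1$ is $G$-equivariant and $L_P$ is $G$-invariant, so $\mathrm{pr}_1^{!}L_P$ is $G$-invariant, and since $\alpha$ is $G$-invariant by a), the two-form $\dd\alpha$ is $G$-invariant, so $\mathcal{R}_{-\dd\alpha}$ commutes with the $G$-action. For the Hamiltonian property, the key computation is that, by Cartan's formula together with a) and b),
\[
\iota_{\act_U(v)}\dd\alpha \;=\; \mathcal{L}_{\act_U(v)}\alpha-\dd\,\iota_{\act_U(v)}\alpha \;=\; -\dd\langle\mathrm{pr}_2,v\rangle,
\]
since b) says precisely $\iota_{\act_U(v)}\alpha=\langle\mathrm{pr}_2,v\rangle$ as a function on $U$, while a) gives $\mathcal{L}_{\act_U(v)}\alpha=0$. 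Hence
\[
\mathcal{R}_{\dd\alpha}\bigl(\act_U(v)+\dd\langle\mathrm{pr}_2,v\rangle\bigr) \;=\; \act_U(v)+\dd\langle\mathrm{pr}_2,v\rangle+\iota_{\act_U(v)}\dd\alpha \;=\; \act_U(v),
\]
and because $(\mathrm{pr}_1)_*\act_U(v)=\act_P(v)$ with $(\act_P(v),0)\in L_P$ (as $L_P\cap TP=\mathrm{Im}(\act_P)$), the element $\act_U(v)$ lies in $\mathrm{pr}_1^{!}L_P$. Applying $\mathcal{R}_{-\dd\alpha}$ gives $\act_U(v)+\dd\langle\mathrm{pr}_2,v\rangle\in L_\alpha$, as required.

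For 2), note first that $-\dd\alpha$ is exact, so $L_\alpha$ is a genuine smooth Dirac structure (the gauge transform of a Dirac structure by a closed two-form is again Dirac). It therefore suffices to prove $L_\alpha\cap TU=0$ on a $G$-invariant neighborhood of $P$: there $L_\alpha$ is automatically the graph of a bivector $\pi_\alpha$, which is Poisson since $L_\alpha$ is involutive. As $L_\alpha\cap TU=0$ is an open, $G$-invariant condition, it is enough to check it along $P\simeq P\times\{0\}$, for which the relevant input is the structure of $\dd\alpha$ there. I would record two facts: \emph{(i)} $\dd\alpha_{(x,0)}(u,u')=0$ for $u,u'\in T_xP$, obtained by extending $u,u'$ to vector fields tangent to $P\times\{0\}$ and applying Cartan's formula, using that $\alpha$ vanishes identically on $P\times\{0\}$ by c); and \emph{(ii)} identifying the vertical tangent space at $(x,0)$ with $\gg^*$, the pairing $\dd\alpha_{(x,0)}$ restricted to $T_xP\times\gg^*$ has the form $(u,\eta)\mapsto-\langle\Phi_x(u),\eta\rangle$ for a linear map $\Phi_x:T_xP\to\gg$ with $\Phi_x\circ\act_{P,x}=\mathrm{id}_\gg$ --- this last ``connection identity'' being exactly the restriction to $(x,0)$, evaluated on a vertical vector, of $\iota_{\act_U(v)}\dd\alpha=-\dd\langle\mathrm{pr}_2,v\rangle$ from part 1), since $\act_U(v)$ is tangent to $P\times\{0\}$ there with horizontal component $\act_{P,x}(v)$. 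In particular $\ker\Phi_x$ is a complement to $\mathrm{Im}(\act_{P,x})$ in $T_xP$.

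Granting (i)--(ii), the verification is short. Let $X\in(L_\alpha)_{(x,0)}\cap T(P\times\gg^*)$ and write $X=X_P+X_0$ with $X_P\in T_xP$, $X_0\in\gg^*$; then $(X,\iota_X\dd\alpha)\in\mathrm{pr}_1^{!}L_P$, i.e.\ $\iota_X\dd\alpha=\mathrm{pr}_1^{*}\beta$ with $(X_P,\beta)\in L_P$. Evaluating $\iota_X\dd\alpha$ on $\act_U(v)_{(x,0)}=\act_{P,x}(v)$ and using (i)--(ii) gives $\beta(\act_{P,x}(v))=\langle X_0,v\rangle$; on the other hand $(X_P,\beta)$ and $(\act_{P,x}(v),0)$ both lie in the isotropic subspace $L_P$, so $\beta(\act_{P,x}(v))=0$, forcing $X_0=0$. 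Now $X=X_P$ is horizontal, and by (i)--(ii) the covector $\iota_{X_P}\dd\alpha$ vanishes on $T_xP$ and equals $-\langle\Phi_x(X_P),\cdot\rangle$ on $\gg^*$; for it to equal $\mathrm{pr}_1^{*}\beta$, which vanishes on verticals, we need $\Phi_x(X_P)=0$, and then $\iota_{X_P}\dd\alpha=0$, so $\beta=0$. Finally $(X_P,0)\in L_P$ means $X_P\in L_P\cap TP=\mathrm{Im}(\act_{P,x})$, which together with $X_P\in\ker\Phi_x$ forces $X_P=0$. Hence $L_\alpha\cap TU=0$ near $P$, completing 2). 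The only step demanding genuine care is the bookkeeping for $\dd\alpha$ along $P\times\{0\}$, in particular the connection identity $\Phi_x\circ\act_{P,x}=\mathrm{id}_\gg$ of (ii); everything else is formal.
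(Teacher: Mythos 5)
Your proposal is correct and follows essentially the same route as the paper: part 1) is the identical Cartan-formula computation yielding $\iota_{\act_U(v)}\dd\alpha=-\dd\langle\mathrm{pr}_2,v\rangle$, and part 2) reduces to checking $L_\alpha\cap TU=0$ along $P\times\{0\}$ using exactly the same three inputs (pairing with $\act_P(v)$ inside the Lagrangian $L_P$ to kill the vertical component, the vanishing of $\dd\alpha$ on $TP\times TP$ coming from c), and the nondegenerate pairing between $\mathrm{Im}(\act_P)$ and $\gg^*$ coming from the identity of part 1)). Your pointwise bookkeeping via the map $\Phi_x$ is just a local rephrasing of the paper's global one-form identities, with the steps in a slightly different order.
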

\begin{proof}
Since all ingredients in its construction are $G$-invariant, so is the Dirac structure $L_{\alpha}$. Moreover, for any $v \in \gg$, since $\act_P(v)\in L_P$, we have that $\act_U(v) \in \mathrm{pr}_1^!L_P$. Thus invariance of $\alpha$, a): 
\[
 \mathscr{L}_{\act_U(v)}\alpha = \dd \iota_{\act_U(v)}\alpha+\iota_{\act_U(v)}\dd\alpha=0,
\]together with condition b) give
\begin{align}\label{eq : dalpha iso}
 \iota_{\act_U(v)}\dd\alpha = -\dd\iota_{\act_U(v)}\alpha = -\dd \langle \mathrm{pr}_2,v\rangle
\end{align}and hence we obtain the moment map condition:
\[
 \act_U(v) - \iota_{\act_U(v)}\dd\alpha = \act_U(v) + \dd\langle \mathrm{pr}_2, v\rangle \in \Gamma(L_{\alpha}).
\]The Poisson locus of $L_{\alpha}$ is given by the open, $G$-invariant set in which the intersection
\begin{align*}
 \mathcal{R}_{-\dd \alpha}\mathrm{pr}_1^!(L_P) \cap TU
\end{align*}is trivial. Let $u\in \mathcal{R}_{-\dd \alpha}\mathrm{pr}_1^!(L_P) \cap TU|_P$. Then there exists $\xi\in T^*P$ such that $\mathrm{pr}_{1*}(u)+\xi\in L_P$ and $\mathrm{pr}_1^*(\xi)=\iota_u\dd\alpha$. We have that, for any $v\in \gg$:
\[0=\langle \act_P(v), \mathrm{pr}_{1*}u+\xi\rangle= \langle \act_P(v), \xi\rangle=\langle \act_U(v), \mathrm{pr}_1^*(\xi)\rangle=\langle \act_U(v), \iota_{u}\dd\alpha\rangle=
 \iota_{u}\dd\langle \pr_2, v\rangle=\langle  \pr_{2*}u,v\rangle,
\]
where the first equality holds because $\mathrm{Im}(\act_P)\subset L_P$, the third because $\pr_{1*}\act_U(v)=\act_P(v)$, and the one before the last by (\ref{eq : dalpha iso}). We conclude that $u\in \ker \pr_{2*}|_P=TP$. Pulling the equality $\pr_1^*(\xi)=\iota_u\dd\alpha$ back to $P$ then gives $\xi =0$, which is to say that $u$ lies in the kernel of $L_P$. Therefore $u$ is of the form $u=\act_U(v)$ for some $v \in \gg$. But because $0 \in \gg^*$ is a fixed point of the coadjoint action, at points of $P \times \{0\} \subset U$, we have that $\act_P(v)=\act_U(v)$ along $P$, and because (\ref{eq : dalpha iso}) implies that $\dd\alpha : \mathrm{Im}\,  \act_U|_P \diffto \mathrm{Im}\, \mathrm{pr}^*_{2}|_P$ is an isomorphism, we conclude that $u=0$. Hence the Dirac structure on $U$ is Poisson along $P$, and hence in a $G$-invariant neighborhood of $P$.
\end{proof}

\begin{definition}\label{def:local:model}
Consider a $G$-invariant neighborhood $U\subset P\times \gg^*$ of $P$ on which the Dirac structure $L_{\alpha}$ corresponds to a Poisson structure $\pi_{\alpha}$. The Poisson Hamiltonian space:
\[G\acts (U,\pi_{\alpha})\stackrel{\mathrm{pr}_2}{\rmap}\gg^*\]
is the \textbf{local model} corresponding to the generalized connection $\alpha$. 
\end{definition}

The following shows uniqueness of the local model around $\mu^{-1}(0)$:

\begin{proposition}\label{prop:local_models_are_the_same}
Let $\alpha \in \Omega^1(U_{\alpha})$ and $\beta \in \Omega^1(U_{\beta})$ be two generalized connections, and let 
\begin{align*}
 & G \acts (U_{\alpha},{\pi}_{\alpha}) \stackrel{\mathrm{pr}_2}{\rmap} \gg^*, & G \acts (U_{\beta},{\pi}_{\beta}) \stackrel{\mathrm{pr}_2}{\rmap} \gg^*
\end{align*}
be the corresponding local models. There exist $G$-invariant neighborhoods $U'_{\alpha} \subset U_{\alpha}$ and $U'_{\beta}\subset U_{\beta}$ of $P$, and an isomorphism of Poisson Hamiltonian spaces which is the identity on $P$:
\begin{align*}
 & \psi : (U'_{\alpha},{\pi}_{\alpha}) \diffto (U'_{\beta},{\pi}_{\beta}), & \mathrm{pr}_2 = \mathrm{pr}_2 \circ \psi.
\end{align*}%which fixes $P$ pointwise.
\end{proposition}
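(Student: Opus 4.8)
The plan is to use a Moser-type deformation argument along the affine segment of generalized connections joining $\alpha$ and $\beta$. First I would observe that the set of generalized connections is affine: if $\alpha$ and $\beta$ are generalized connections on a common $G$-invariant neighborhood $U$ of $P$ (after shrinking $U_\alpha, U_\beta$ to a common domain), then for every $t$ the form $\alpha_t := \alpha + t(\beta - \alpha)$ is again a generalized connection, since properties a)--c) are each preserved under affine combinations --- invariance is linear, condition b) gives $\langle (\beta-\alpha)_{(x,\xi)}, \act_{P\times\gg^*}(v)\rangle = \langle\xi,v\rangle - \langle\xi,v\rangle = 0$, and condition c) says both vanish along $P\times\{0\}$. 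By Proposition \ref{pro : local model}, each $\alpha_t$ yields a Poisson structure $\pi_{\alpha_t}$ on a $G$-invariant neighborhood of $P$; by compactness-free openness arguments (the Poisson locus is open and contains $P$ for each $t$, and $t$ ranges over the compact $[0,1]$) one can choose a single $G$-invariant neighborhood $U'$ of $P$ on which $L_{\alpha_t}$ is the graph of a Poisson structure $\pi_t := \pi_{\alpha_t}$ for all $t\in[0,1]$.

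Next I would set $\eta := \beta - \alpha \in \Omega^1(U')$; then $\tfrac{d}{dt}(-\dd\alpha_t) = -\dd\eta$, so the Dirac structures $L_{\alpha_t} = \mathcal{R}_{-\dd\alpha_t}\pr_1^! L_P$ form a family all obtained from the fixed Dirac structure $\pr_1^! L_P$ by gauge transformation, differing by the exact $2$-forms $-\dd(t\eta)$. The standard way to produce an isotopy intertwining gauge-equivalent Dirac structures is to integrate the time-dependent vector field $X_t$ determined by the Moser equation: one wants $X_t$ with $\iota_{X_t}(-\dd\alpha_t) $-type correction built so that the flow $\varphi_t$ satisfies $\varphi_t^! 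L_{\alpha_t} = L_\alpha$. Concretely, because $\eta$ vanishes along $P$ together with its relevant derivatives to the order forced by c), one seeks $X_t$ tangent to $P\times\{0\}$ --- indeed vanishing along $P$ --- solving, in Dirac-geometric terms, $\iota_{X_t} L_{\alpha_t} \ni \eta$ (more precisely, choosing a lift of $\eta$ through the Poisson anchor $\pi_t^\sharp$, so $X_t := -\pi_t^\sharp(\eta)$). One then checks that $\mathscr{L}_{X_t}(-\dd\alpha_t) + \tfrac{d}{dt}(-\dd\alpha_t) = -\dd(\iota_{X_t}(-\dd\alpha_t) + \eta) = 0$ provided $\iota_{X_t}\dd\alpha_t = -\eta$ modulo the annihilator contributions, which is exactly the Moser equation; $G$-invariance of $X_t$ follows from $G$-invariance of $\eta$, of $\alpha_t$, and of $\pi_t$, and since $X_t$ vanishes along $P$ its flow fixes $P$ pointwise. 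Integrating $X_t$ (after shrinking $U'$ so that the flow exists up to time $1$, using that $X_t|_P = 0$) gives a $G$-equivariant diffeomorphism $\psi := \varphi_1$ from a neighborhood of $P$ to another, with $\psi|_P = \mathrm{id}$, $\psi^*\pi_\beta = \pi_\alpha$, and $\pr_2\circ\psi = \pr_2$ because $X_t$ lies in $\ker \pr_{2*}$ (as $\iota_{X_t}\dd\langle\pr_2,v\rangle = \iota_{X_t}\iota_{\act_U(v)}\dd\alpha_t$ can be arranged to vanish along the relevant directions, or more cleanly because $\pr_2 = \langle\cdot\rangle$ is the moment map and $X_t = -\pi_t^\sharp\eta$ with $\eta$ killing the action fields by b), forcing $\pr_{2*}X_t = 0$).

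The main obstacle I expect is the bookkeeping around the precise Moser equation in the Dirac (rather than symplectic) setting: unlike the symplectic case, where $L_{\alpha_t}$ is the graph of a closed $2$-form and Moser's equation is the usual $\iota_{X_t}\Form_t = -\eta$, here $L_{\alpha_t}$ is only Poisson, so one must lift $\eta$ through $\pi_t^\sharp$ and verify that the resulting $X_t$ genuinely intertwines the Dirac structures --- i.e.\ that $\varphi_t^! L_{\alpha_t}$ is independent of $t$ --- which requires the infinitesimal identity $\mathscr{L}_{X_t} L_{\alpha_t} = -\tfrac{d}{dt} L_{\alpha_t}$ in the form ``$L_{\alpha_t}$ moves by the gauge transformation $-\dd\eta$ and $X_t$ cancels it''. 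Handling this cleanly is easiest by phrasing everything in terms of the fixed Dirac structure $\pr_1^! L_P$: writing $\psi_t := \mathcal{R}_{\dd(t\eta)}\circ(\varphi_t)$, one shows $\psi_t^!(\pr_1^! L_P) = \pr_1^! L_P$ reduces to an ODE for $X_t$ that is solvable precisely because $\eta$ is exact-corrected and vanishes to the right order along $P$, guaranteeing both solvability and the fixed-point property. The properness of the $G$-action is used, as before, to average/choose all data $G$-invariantly and to ensure the shrinking neighborhoods can be taken $G$-invariant.
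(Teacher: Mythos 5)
Your proposal is correct and follows essentially the same route as the paper: interpolate linearly between the two generalized connections, observe that each $\alpha_t=\alpha+t(\beta-\alpha)$ is again a generalized connection, and integrate the Moser-type vector field $\pm\pi_t^{\sharp}(\beta-\alpha)$, which is $G$-invariant, vanishes along $P$ (so its time-one flow exists near $P$, is $G$-equivariant, and fixes $P$), and is annihilated by $\mathrm{pr}_{2*}$ because $\beta-\alpha$ kills the action fields. The only point left to settle in a final write-up is the sign of the Moser vector field and the verification that its flow intertwines the gauge-transformed Poisson structures --- the ``bookkeeping'' you flag --- which the paper dispatches by quoting \cite[Lemma 3.4]{Alekseev_Meinrenken_2007} (or \cite[Lemma 4]{Frejlich_Marcut}) for the vector field $\pi_t^{\sharp}(\beta-\alpha)$.
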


\begin{proof}
Observe first that, on the $G$-invariant common domain $U_{\alpha} \cap U_{\beta}$, the one-forms
\begin{align*}
 \alpha_t:=\alpha+t\gamma, \qquad \text{where} \qquad \gamma:=\beta-\alpha \in \Omega^1(U_{\alpha} \cap U_{\beta}),
\end{align*}are generalized connections for all $t \in [0,1]$. Let $U \subset U_{\alpha} \cap U_{\beta}$ be a $G$-invariant open set in which each Dirac structure $\mathcal{R}_{-\dd\alpha_t}\mathrm{pr}_1^!L_P$ is the graph of a Poisson structure $\pi_t \in \mathfrak{X}^2(U)$, for all $t\in [0,1]$. Then the time-dependent vector field
$\pi_{t}^{\sharp}(\gamma)$
%\[\mathcal{W}_t:=\pi_{t}^{\sharp}(\gamma)\]
is $G$-invariant and vanishes along $P$ (because so does $\gamma$); hence there is an open, $G$-invariant neighborhood of $P$ in which its flow $\psi_t$ is defined for all $t\in [0,1]$. These are $G$-equivariant maps, and by \cite[Lemma 3.4]{Alekseev_Meinrenken_2007} or \cite[Lemma 4]{Frejlich_Marcut} the time-one flow $\psi:=\psi_1$ maps $\pi_0=\pi_{\alpha}$ to $\pi_1=\pi_{\beta}$, wherever the flow is defined. Moreover, since $\gamma$ vanishes on $\act_U(v)$ for all $v\in\gg$, we have
\[0=\left\langle \gamma,\act_U(v) \right\rangle=\left\langle\gamma, \pi_{t}^{\sharp}\mathrm{d}\langle \mathrm{pr}_2,v\rangle\right\rangle=
-\big\langle \pi_{t}^{\sharp}(\gamma), \mathrm{d}\langle \mathrm{pr}_2,v\rangle\big\rangle,\]
and therefore $\mathrm{pr}_2$ is preserved by the flow of $\pi_{t}^{\sharp}(\gamma)$.
\end{proof}

\section{The normal form theorem}

We are now ready to state and prove the main theorem

\begin{MT} Any proper and infinitesimally 
free Poisson Hamiltonian space \eqref{Poisson_Ham_space} is isomorphic around $\mu^{-1}(0)$ with the local model from Proposition \eqref{pro : local model}.
\end{MT}

%\subsection*{Proof of the Main Theorem}
\begin{proof}
We pull back the submersion by Dirac manifolds \eqref{sdm} via scalar multiplication: 
\begin{align*}
 & m : I \times \gg^* \to \gg^*, & m(t,\xi):=t\xi,
\end{align*}
where $I:=[0,1]$. The resulting pullback diagram is
\[
\xymatrix{
 m^*(M) \ar[r]^{\overline{m}} \ar[d]_{\overline{\mu}} & M \ar[d]^{\mu}\\
 I \times \gg^* \ar[r]_m & \gg^*
 }
\]
where 
\[m^*(M) = \{ (x,t,\xi) \in M \times I \times \gg^* \ | \ \mu(x)=m(t,\xi)\}\quad \textrm{and}\quad \overline{m}(x,t,\xi)=x.\] 
% where we regard $m^*(M)$ as the subset
% \[
%  m^*(M) = \{ (x,t,\xi) \in M \times I \times \gg^* \ | \ \mu(x)=m(t,\xi)\},
% \]
% and $\overline{m}$ is the induced pullback map $\overline{m}(x,t,\xi)=x$. 
We obtain the submersion by Dirac manifolds: \begin{equation}\label{induced sdm}
 \overline{m}^!\mathrm{Gr}(\pi) \Longrightarrow m^*(M) \rmap I \times \gg^*.
\end{equation}
For each $t\in I$, this can be pulled back via the map $\gg^*\hookrightarrow I\times \gg^*$, $\xi\mapsto (t,\xi)$  and we obtain a family of submersions by Dirac manifolds. 
For $t=1$, this is the original submersion by Dirac manifolds \eqref{sdm}:
\begin{align*}
 \overline{\mu}_1 = \mu : m_1^*(M) = M  \rmap \gg^*,
\end{align*}
equipped with the Dirac structure $\overline{m}_1^!\mathrm{Gr}(\pi) =\mathrm{Gr}(\pi)$. For $t=0$, the induced submersion is
\begin{align*}
 \overline{\mu}_0 = \mathrm{pr_2} : m_0^*(M) = P \times \gg^* \rmap \gg^*,
\end{align*}equipped with the Dirac structure
\begin{align*}
 \overline{m}_0^!\mathrm{Gr}(\pi) = \mathrm{pr}_1^!i^!\mathrm{Gr}(\pi)= \mathrm{pr}_1^!(L_P).
\end{align*}

On the original submersion by Dirac manifolds \eqref{sdm}, we consider an Ehresmann connection induced by a principal connection $\theta\in \Omega^1(M;\gg)$, as in Lemma \ref{lem : equivariant Ehresmann}. Denote the associated generalized connection by $\widetilde{\theta}\in \Omega^1(M\times \gg^*)$. We equip the submersion by Dirac manifolds \eqref{induced sdm} with the pullback Ehresmann connection. Then, the horizontal lift of the vector field $\tfrac{\partial}{\partial t}\in \mathfrak{X}(I \times \gg^*)$ has the form
\[
 \mathrm{hor}(\tfrac{\partial}{\partial t})=\mathcal{V}-r^*\widetilde{\theta} \in \Gamma(\overline{m}^!\mathrm{Gr}(\pi)),
\]
where $r(x,t,\xi)=(x,\xi)$, and the vector component is given by:
\[\mathcal{V}_{(x,t,\xi)}= \big(-\pi^{\sharp}\langle \xi, \theta_x\rangle,\tfrac{\partial}{\partial t},0\big)\in T_{(x,t,\xi)}m^*(M).\]
It is useful to regard it as a vector field on $M \times I \times \gg^*$ which is tangent to $m^*(M)$. Then one can easily see that the flow of $\mathcal{V}$ has the form:
\begin{equation}\label{eq:form:of:diffeo}
\varphi_t: m^*(M) \supset U_t \to m^*(M),\quad \varphi_t(x,s,\xi)=(\varphi_{t,\xi}(x),s+t,\xi),
\end{equation}
where $U_t$ is the maximal domain of the flow and $\varphi_{t,\xi}$ is the flow on $M$ of the vector field $-\pi^{\sharp}\langle \xi, \theta\rangle $. Along the submanifold $P\times I\times \{0\}\subset m^*(M)$, we have that $\varphi_t(x,s,0)=(x,s+t,0)$, and so the flow is defined up to time $t=1$ on $P=P\times \{0\}\times \{0\}$. Therefore, there is an open set \[P\subset O\subset m_0^*(M)=P\times \gg^*,\]
such that the flow is defined on $O$ for all $t\in I$:
\[\varphi_t:O\diffto \varphi_t(O)\subset m_t^*(M).\]
Moreover, because $\mathcal{V}$ is $G$-invariant, $O$ may be chosen $G$-invariant, and so $\varphi_t$ is $G$-equivariant.

The flow of the section $\mathrm{hor}(\tfrac{\partial}{\partial t})$ of the Dirac structure $\overline{m}^!\mathrm{Gr}(\pi)$ is by $G$-equivariant Courant automorphisms of this Dirac structure (in the sense of \cite[Proposition 2.3]{Gualtieri})
\begin{align*}%\label{eq : alpha}
&\Phi_t:(U_t,\overline{m}^!\mathrm{Gr}(\pi))\to (m^*(M),\overline{m}^!\mathrm{Gr}(\pi)),\\
 & \Phi_t: = \varphi_{t\ast} \circ \mathcal{R}_{-\mathrm{d}\alpha_t}, & \alpha_t:=\smallint_0^t \varphi_s^* r^*\widetilde{\theta} \mathrm{d}s.
\end{align*}
Therefore, we have a $G$-equivariant Dirac-embedding between the fibers over $0$ and $1$ 
\begin{align}\label{eq:the_final_iso}
 \varphi_{1} : \left(O,\mathcal{R}_{-\mathrm{d}\alpha_1}\mathrm{pr}_1^!L_P \right) \hookrightarrow  \left(M,\mathrm{Gr}(\pi) \right)
\end{align}which, by the explicit form \eqref{eq:form:of:diffeo}, intertwines $\mathrm{pr}_2$ and the original moment map $\mu$:
\begin{align*}
 \mu \circ \varphi_{1} = \mathrm{pr}_2.
\end{align*}
To conclude the proof, we show that the left-hand side of \eqref{eq:the_final_iso} is isomorphic to a local model. By Proposition \ref{prop:local_models_are_the_same}, it suffices to show that $\alpha_1\in \Omega^1(O)$ is a generalized connection. This can be deduced from the following properties of the map 
$r\circ \varphi_t|_{O}:O \to M\times \gg^*$, which are readily checked using \eqref{eq:form:of:diffeo}: it preserves the projection $\pr_2$ to $\gg^*$, restricts to the inclusion $P\times \{0\}\hookrightarrow M\times \{0\}$ and is $G$-equivariant. Therefore $\varphi_t^*r^*\widetilde{\theta}|_O$ is a generalized connection on $O$, and so is the average $\alpha_1=\smallint_0^1 \varphi_t^* r^*\widetilde{\theta} \mathrm{d}t|_O$. 
\end{proof}

\subsection*{Other proofs}

The Main Theorem is a normal form around the Dirac transversal $P=\mu^{-1}(0)$, which is additionally compatible with the $G$-action and the moment map. In recent years, several techniques have been developed to obtain local forms around transversals. Our proof is modeled on the ideas from \cite{Subm}, which are closely related to \cite{Bischoff_Bursztyn_Lima_Meinrenken_2020}. An alternative proof, based on \cite{Bursztyn_Lima_Meinrenken}, can be obtained by using an equivariant Ehresmann connection as in Lemma \ref{lem : equivariant Ehresmann} to lift the Euler vector field on $\gg^*$ to a section of $\Gr(\pi)$ which is Euler-like along $P \subset M$, and then invoke \cite[Theorem 5.1]{Bursztyn_Lima_Meinrenken}. These steps lead to the same isomorphism \eqref{eq:the_final_iso} as in the proof above. Yet another proof can be deduced from a $G$-equivariant version of the uniqueness of coisotropic embeddings \cite[Proposition 5.1]{Geudens_2020}, which can be proven using the spray methods of \cite{Frejlich_Marcut,Frejlich_Marcut18}.

\section{The case of a principal action}

Consider now the case of a Poisson Hamiltonian space \eqref{Poisson_Ham_space}, for which the action of $G$ on $M$ is proper and free.  In this case, we have that 
\begin{itemize}
\item $S:=P/G$ has an induced Poisson structure $\pi_S$;
\item $M/G$ has a Poisson structure $\pi_{M/G}$, for which both the quotient map $M\to M/G$ and the inclusion $S\hookrightarrow M/G$ are Poisson maps. 
\end{itemize}

The Main Theorem provides a normal form for $(M/G,\pi_{M/G})$ around the Poisson submanifold $S$, which we will describe here. In the case when $S$ is a symplectic manifold, the Poisson  quotient $(P\times\gg^*,p^*\omega_S-\dd \widetilde{\theta})/G$ has been studied in \cite{Montgomery}, and our discussion will generalize this construction. 

\vspace*{0.2cm}

We specialize the local model from Definition \ref{def:local:model} to the proper and free case. The input data is
\begin{itemize}
\item a principal $G$-bundle $P$ over the Poisson manifold $(S,\pi_S)$.
\end{itemize}
To keep track of notation, we put the many projections involved into a diagram: 
\[
\xymatrix{
P\times \gg^* \ar[r]^{p_G}\ar[d]_{\mathrm{pr_1}}\ar[dr]^{p} & P\times_{G}\gg^* \ar[d]^r\\
P  \ar[r]_{p_S} & S. 
}
\]and recall that
\[p_S^!\mathrm{Gr}(\pi_S)=L_P.\]

Fix a principal bundle connection $\theta$ on $P$. On $P\times \gg^*$, we have the $G$-invariant Dirac structure:
\[ L_{\widetilde{\theta}}:=\mathcal{R}_{-\dd \widetilde{\theta}} \, p^!\mathrm{Gr}(\pi_S).\]
By Proposition \ref{pro : local model}, on some open $G$-invariant neighborhood $U\subset P\times \gg^*$ of $P$, $L_{\widetilde{\theta}}$ corresponds to a Poisson structure $\pi_S^{-\dd \widetilde{\theta}}$. The local model for the Hamiltonian Poisson space is: 
\[G\acts (U,\pi_S^{-\dd \widetilde{\theta}})\stackrel{\mathrm{pr}_2}{\rmap}\gg^*.\]
We will describe the Poisson structure on the quotient, i.e., for which the projection is a Poisson map:
\[p_G: (U,\pi_S^{-\dd \widetilde{\theta}})\rmap (U/G,\pi_0).\]

For this, we introduce some notation. 
The horizontal lift with respect to $\theta$ will be denoted by $\ho$. Denote the curvature of $\theta$ by:
\[\cu(u_1,u_2)_{x}:=\big([\ho(u_1),\ho(u_2)]-\ho([u_1,u_2])\big)_{x}\in \gg,\quad \quad u_1,u_2\in \mathfrak{X}(S),\]
where the right-hand side is a vertical vector in $T_yP$, and so we can identify it with an element of $\gg$. Since $\cu(u_1,u_2):P\to \gg$ is $G$-equivariant, the following 2-form is well-defined: 
\[\Form\in \Omega^2(P\times_G\gg^*),\qquad 
\Form(U_1,U_2)_{p_G(x,\xi)}:=\langle \xi,\cu(r_*(U_1),r_*(U_2))_{x}\rangle
\]
The connection $\theta$ induces a linear connection on the associated bundle $P\times_G\gg^*$, and the corresponding Ehresmann connection will be denoted by
\[\Ehc\subset T(P\times_G\gg^*).\]

Next consider the linear Poisson structure $\pi_{\gg}$ on $\gg^*$. Since $\pi_{\gg}$ is $G$-invariant, it induces a vertical, fiberwise linear Poisson structure on $P\times_G\gg^*$, which will be denoted 
\[\piv\in \mathfrak{X}^2(P\times_{G}\gg^*).\]

We first describe the structure which depends only on the connection:

\begin{proposition}\label{lemma:push:forward:foo}
The Dirac structure $\Gr(-\dd \widetilde{\theta})$ pushes forward via the projection $p_G$ to a Dirac structure $D_{\theta}$ on $P\times_G\gg^*$, for which the quotient map
\[
p_G : \left(P \times \gg^*,\Gr(-\dd \widetilde{\theta})\right) \rmap \left(P \times_G \gg^*,D_{\theta}\right)
\]is a strong, forward Dirac submersion. Moreover, $D_{\theta}$ is explicitly given by:
\begin{equation}\label{eq:Dirac:Vorobjev}
D_{\theta}=
\mathcal{R}_{\Form}(\Ehc) \oplus \mathcal{R}_{-\piv}(\Ehc^{\circ})
%D_{\theta}=\{u-i_u\Form \, |\, u\in \Ehc\}\oplus \{{\piv}^{\sharp}\alpha-\alpha \, |\, \alpha\in \Ehc^{\circ}\}.
\end{equation}
\end{proposition}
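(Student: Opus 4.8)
The plan is to verify both assertions --- that $\Gr(-\dd\widetilde\theta)$ descends to a Dirac structure $D_\theta$ along the principal bundle quotient, and that the descended structure is the explicit sum \eqref{eq:Dirac:Vorobjev} --- by a purely local, connection-adapted calculation, exploiting that $p_G$ is a surjective submersion with connected $G$-orbit fibers. First I would recall the standard criterion (e.g.\ from \cite{Meinrenken}): a Dirac structure $L$ on $P\times\gg^*$ pushes forward along a surjective submersion $q$ iff $L$ is $q$-invariant, i.e.\ invariant under the flows of vertical sections of $L$ and compatible with the vertical distribution; for $L=\Gr(\omega)$ with $\omega$ a closed two-form, the clean way to say this is that $\iota_X\omega$ is basic for every vertical $X$ and that $\omega$ itself restricted to the fibers behaves accordingly. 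Here $\omega=-\dd\widetilde\theta$, $X=\act_{P\times\gg^*}(v)$ for $v\in\gg$, and by property (b) of the generalized connection $\langle\widetilde\theta,\act(v)\rangle=\langle\pr_2,v\rangle$, so $\iota_{\act(v)}(-\dd\widetilde\theta)=\dd\langle\pr_2,v\rangle$ (using $G$-invariance of $\widetilde\theta$); this is basic precisely because $\pr_2$ descends to the section $r:P\times_G\gg^*\to$ (no, rather because the coadjoint-equivariant function $\pr_2$ has a well-understood descent). This gives the pushforward $D_\theta$ and the submersion statement; strongness (the forward image is a genuine Dirac structure everywhere, not just generically) follows because the vertical part of $\Gr(-\dd\widetilde\theta)$ is exactly $\mathrm{Im}(\act)$, which is the kernel direction, so no drop of dimension occurs.

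For the explicit formula, I would fix the connection $\theta$ and use the induced splitting $T(P\times\gg^*) = \mathrm{hor}\oplus\mathrm{vert}$, where the horizontal space is $\ho$ on the $P$-factor paired with the flat connection on $\gg^*$ in the adapted trivialization, so that after pushforward $T(P\times_G\gg^*)=\Ehc\oplus\ker r_*$. The key computation is to evaluate $-\dd\widetilde\theta$ on pairs of horizontal lifts and on mixed/vertical pairs: on two horizontal lifts of $u_1,u_2\in\X(S)$ one gets, by the Maurer--Cartan/structure equation $\dd\theta = -\tfrac12[\theta,\theta] + \cu$ applied to horizontal vectors (where $\theta$ vanishes), exactly $-\langle\xi,\cu(u_1,u_2)\rangle$ --- i.e.\ $\Form$ up to the sign bookkeeping; on a horizontal and a vertical vector it vanishes by the definition of $\widetilde\theta$ and the connection property; and the vertical-vertical part reproduces (minus) the linear Poisson bivector $\piv$ on $\gg^*$ via the standard identity $\dd\langle\xi,\theta\rangle$ restricted to the fiber $=$ the linear symplectic-type form dual to $\pi_\gg$. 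Assembling: $p^!\mathrm{Gr}(\pi_S)$ contributes the horizontal lift of $\mathrm{Gr}(\pi_S)$, which descends to $\Ehc\oplus\Ehc^\circ$ (the horizontal graph of the zero form, since we have quotiented out $\pi_S$ --- here one must be careful: in the proposition as stated $D_\theta$ is the pushforward of $\Gr(-\dd\widetilde\theta)$ \emph{alone}, so $\pi_S$ does not enter and the horizontal piece is just $\Ehc$, the graph of the zero two-form on the horizontal distribution), and then gauging by $-\dd\widetilde\theta$ and pushing forward twists $\Ehc$ by $\Form$ and twists the conormal $\Ehc^\circ$ by $-\piv$, giving precisely \eqref{eq:Dirac:Vorobjev}.

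I would organize the argument so that the gauge transformation and the pushforward commute in the appropriate sense: $p_G{}_!(\mathcal{R}_{-\dd\widetilde\theta}L)$ for a basic-compatible $L$ equals $\mathcal{R}_{\overline{-\dd\widetilde\theta}}\,p_G{}_!L$ only when $-\dd\widetilde\theta$ descends as a two-form, which it does \emph{not} (it is not horizontal); so instead I must push forward the \emph{whole} gauged structure at once. Concretely, a point of $\mathcal{R}_{-\dd\widetilde\theta}\Gr(0_{T(P\times\gg^*)})$ --- wait, $\Gr(-\dd\widetilde\theta)$ is already $\mathcal{R}_{-\dd\widetilde\theta}$ applied to $TM$-part; its elements are $w - \iota_w\dd\widetilde\theta$ for $w\in T(P\times\gg^*)$ --- projects forward to $r_*(w) + (\text{covector descending }\iota_w\dd\widetilde\theta)$ exactly when $\iota_w\dd\widetilde\theta$ annihilates the vertical space, i.e.\ when $w$ is $G$-invariant modulo vertical and $\dd\widetilde\theta(w,\act(v))=0$; decomposing $w$ into its $\Ehc$-part and its $\piv$-part and using the three bracket computations above yields the two summands. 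The \textbf{main obstacle} I anticipate is bookkeeping the sign and the identification $\cu\leftrightarrow -\dd\theta|_{\mathrm{hor}}$ against the paper's stated conventions (\cite[Appendix A]{MariusRuiIonut}), together with correctly handling the fact that $-\dd\widetilde\theta$ is not basic, so that the pushforward must be computed on the gauged structure globally rather than factored; getting the orthogonal-direct-sum decomposition $\mathcal{R}_\Form(\Ehc)\oplus\mathcal{R}_{-\piv}(\Ehc^\circ)$ to be a \emph{Lagrangian} subbundle (not merely isotropic) requires checking a dimension count, which is immediate once one knows $p_G$ drops the fiber dimension of $\gg^*$ by $\dim\gg$ on each orbit --- here, though, $p_G$ quotients $P$ by $G$, reducing dimension by $\dim G=\dim\gg$, and $\dim(P\times_G\gg^*)=\dim P - \dim\gg+\dim\gg^*=\dim P$, so the count works out.
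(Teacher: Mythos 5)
Your outline follows the same strategy as the paper's proof---evaluate $-\dd\widetilde{\theta}$ on a connection-adapted frame (horizontal lifts, the $\gg$-action on the $P$-factor, constant vector fields on $\gg^*$), identify $p_G$-related sections downstairs, and conclude by comparing Lagrangian families---but two of your justifications are wrong and the key computation is missing. The criterion you invoke for descent is not the right one: you claim $\iota_{\act(v)}(-\dd\widetilde{\theta})=\dd\la\pr_2,v\ra$ is basic, but $\la\pr_2,v\ra$ is not $G$-invariant for fixed $v$ (it transforms by the coadjoint action), so this one-form is not basic; had it been, $\Gr(-\dd\widetilde{\theta})$ would push forward to the graph of a two-form, contradicting the bivector term $-\piv$ in \eqref{eq:Dirac:Vorobjev}. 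What makes the pointwise pushforward well defined is simply the $G$-invariance of the Dirac structure as a whole; smoothness then comes from the explicit formula, which is why the paper establishes \eqref{eq:Dirac:Vorobjev} first and only then cites \cite[Proposition 5.9]{Bursztyn}. Likewise your justification of strongness is backwards: strongness means $\ker(p_G)_*\cap\Gr(-\dd\widetilde{\theta})=0$, whereas you assert that ``the vertical part of $\Gr(-\dd\widetilde{\theta})$ is exactly $\mathrm{Im}(\act)$'', which would be precisely the failure of strongness. The correct argument is that $\iota_{\act_{P\times\gg^*}(v)}\dd\widetilde{\theta}=-\dd\la\pr_2,v\ra$ contains the nonzero constant covector $-v$ on the $\gg^*$-factor, so no nonzero vertical vector lies in the graph.

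The genuine gap is the second summand of \eqref{eq:Dirac:Vorobjev}: your statement that the vertical--vertical part ``reproduces $-\piv$ via a standard identity'' is an assertion, not a proof. The fields $\hat v=(\act_P(v),0)$ in your frame are not $p_G$-vertical (the $p_G$-vertical fields are the diagonal fields $(\act_P(v),\act_{\gg^*}(v))$), so computing their image requires the substitution $(p_G)_*(\act_P(v),0)=(p_G)_*(0,-\act_{\gg^*}(v))=-(\piv)^{\sharp}\rho(v)$, where $\rho(v)=(\la\pi_{\gg}(v),\theta\ra,v)$ is the covector satisfying $-\iota_{\hat v}\dd\widetilde{\theta}=\rho(v)$; one must then check that $\rho(v)$ annihilates both the horizontal lifts and the diagonal action directions, so that it descends to an element of $\Ehc^{\circ}$. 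This is the computation that actually produces $\mathcal{R}_{-\piv}(\Ehc^{\circ})$, and without it the formula is only verified on the horizontal piece. With these three points repaired, your argument coincides with the paper's.
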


\begin{remark}\rm
If follows from \eqref{eq:Dirac:Vorobjev} that the Dirac structure on $D_{\theta}$ is a \emph{coupling} Dirac structure with respect to the projection $r:P\times_{G}\gg^*\to S$, meaning that 
(see, e.g., \cite{BrFe08,DuWa,Vaisman}):
\begin{align*}
    & D_{\theta}\cap (V\oplus V^{\circ})=0,& V:=\ker r_*\subset T(P\times_G\gg^*). 
\end{align*} 
Equality \eqref{eq:Dirac:Vorobjev} also shows that the Vorobjev triple corresponding to $D_{\theta}$ is $(-\piv,\Ehc,\Form)$. The Dirac structure $D_{\theta}$ can be obtained also using the more general construction from \cite[Theorem 3.2]{Wade}.
\end{remark}

\begin{proof} Because $\Gr(-\dd \widetilde{\theta})$ is $G$-invariant, it pushes forward pointwise to $P\times_{G}\gg^*$, so $D_{\theta}:=(p_{G})_{!}\Gr(-\dd \widetilde{\theta})$ is a well-defined family of Lagrangian subspaces. We will prove \eqref{eq:Dirac:Vorobjev}, which implies that $D_{\theta}$ is a smooth Lagrangian subbundle, and being the smooth pushforward of a Dirac structure, it will follow \cite[Proposition 5.9]{Bursztyn} that $D_{\theta}$ is a Dirac structure.

We will use the following notation:
\begin{itemize}
\item $\hat{v}:=(\act_{P}(v),0)\in \mathfrak{X}(P\times \gg^*)$, i.e., 
the infinitesimal action of $v\in \gg$ on $P$;
\item $\mathrm{h}_{P \times \gg^*}(u):=(\mathrm{h}_{P}(u),0)\in\mathfrak{X}(P\times \gg^*)$ the horizontal lift of $u\in \mathfrak{X}(S)$;
\item $\hat{\xi}:=(0,\xi)\in \mathfrak{X}(P\times \gg^*)$ the constant vector field with second component $\xi\in \gg^*$. 
\end{itemize}

Vector fields of these three types span $T(P\times\gg^*)$. In fact, we have a direct sum decomposition: 
\begin{equation}\label{eq:decomposition:T}
T_{p_S(y)}S\oplus \gg \oplus \gg^*\simeq T_{(y,\eta)}(P\times \gg^*),\qquad (u,v,\xi) \mapsto\big(\mathrm{h}_{P \times \gg^*}(u)+\hat{v}+\hat{\xi}\big)\big|_{(y,\eta)}.
\end{equation}
%We have a similar decomposition
%\begin{equation}\label{eq:decomposition:Tast}
%T^*_{p_S(y)}S\oplus \gg \oplus \Ehc^{\circ}\simeq T^*_{(y,\eta)}(P\times \gg^*),\qquad (\zeta,v,\tau) \mapsto\big(p^*(\xi)+\rho_{(y,\eta)}(v)+\tau\big)\big|_{(y,\eta)},
%\end{equation}
Consider
\begin{align*}
    & \rho_{(y,\eta)}:\gg \to T^*_yP \times T^*_{\eta}\gg^*, & \rho_{(y,\eta)}(v):= (\la \pi_{\gg,\eta}(v),\theta_y\ra,v),
\end{align*}and observe that
\[
\la \rho(v), \mathrm{h}_{P \times \gg^*}(u)\ra = 0, \qquad \la \rho(v), \hat{w}\ra = \pi_{\gg}(v,w), \qquad 
\la \rho(v), \hat{\xi} \ra =\la \xi,v\ra.
\]
In particular, $\rho(v) \in \act_{P \times \gg^*}(\gg)^{\circ}\subset T^*(P \times\gg^*)$, since the infinitesimal coadjoint action of $\gg$ on $\gg^*$ is given by
\[\act_{\gg^*}(v)_{\xi}=\la \xi, [v,\cdot]\ra=\pi_{\gg,\xi}^{\sharp}(v),\]and therefore
\[
\la \rho(v), \act_{P \times \gg^*}(w)\ra = \la \rho(v), (\hat{w},\pi_{\gg}^{\sharp}(w))\ra = \pi_{\gg}(v,w)+\pi_{\gg}(w,v) = 0
\]
for all $v,w \in \gg$. Because
\[
\iota_{\mathrm{h}_{P \times \gg^*}(u)}\widetilde{\theta} = \iota_{\hat{\eta}}\widetilde{\theta} = 0, \qquad \iota_{\hat{v}}\widetilde{\theta} = \langle \pr_2,v\rangle
\]we deduce that
\begin{align*}
&-\dd \widetilde{\theta}(\mathrm{h}_{P \times \gg^*}(u),\mathrm{h}_{P \times \gg^*}(u'))_{(y,\eta)}=\langle \eta,  \cu(u,u')\rangle\\
&-\dd \widetilde{\theta}(\hat{v},\hat{w})_{(y,\eta)}=\langle \eta, [v,w]\rangle  \\
&-\dd \widetilde{\theta}(\hat{v},\hat{\xi})_{(y,\eta)}=\langle \xi,v \rangle
\end{align*}
are the only non-zero pairings between the three types of vector fields on $P \times \gg^*$. Therefore
\[
-\iota_{\mathrm{h}_{P \times \gg^*}(u)}\mathrm{d}\widetilde{\theta} = \iota_{\mathrm{h}_{P \times \gg^*}(u)}p_G^*(\Form), \qquad -\iota_{\hat{v}}\mathrm{d}\widetilde{\theta} = \rho(v).
\]The first equality implies that the sections
\begin{align*}
 & \mathrm{h}_{P \times \gg^*}(u)-\iota_{\mathrm{h}_{P \times \gg^*}(u)}\mathrm{d}\widetilde{\theta} \in \Gamma(\mathrm{Gr}(-\dd \widetilde{\theta}))
 & \ho(u)+\iota_{\ho(u)}\Form \in \Gamma(\mathcal{R}_{\Form}(\Ehc))
\end{align*}are $p_G$-related, while the second equality means that
\begin{align*}
 & \hat{v}-\iota_{\hat{v}}\mathrm{d}\widetilde{\theta} \in \Gamma(\mathrm{Gr}(-\dd \widetilde{\theta}))
 & -{\piv}^{\sharp}\rho(v)+\rho(v) \in \Gamma(\mathcal{R}_{-\piv}(\Ehc^{\circ}))
\end{align*}are $p_G$-related, since
\begin{align*}
 (p_G)_*\hat{v} = (p_G)_*(\act_P(v),0) = (p_G)_*(0,-\act_{\gg^*}(v))= (p_G)_*(0,-\pi_{\gg}(v)) = -{\piv}^{\sharp}\rho(v).
\end{align*}This shows that
\[
\mathcal{R}_{\Form}(\Ehc) \oplus \mathcal{R}_{-\piv}(\Ehc^{\circ}) \subset D_{\theta},
\]
which implies equality \eqref{eq:Dirac:Vorobjev}, because both are Lagrangian families. This shows that
\begin{align*}
    p_G : \left(P \times \mathfrak{g}^*,\mathrm{Gr}(-\dd \widetilde{\theta}) \right) \rmap \left(P \times_G \mathfrak{g}^*,D_{\theta} \right)
\end{align*}is forward Dirac submersion, and it is \emph{strong} \cite[Definition 1.11]{ABM}, in the sense that $\ker (p_{G})_* \cap  \mathrm{Gr}(-\dd \widetilde{\theta}) = 0$, as one can read off from the non-zero pairings of $\dd \widetilde{\theta} $ listed above.
\end{proof}

In the next step, the tangential product (or \emph{tensor product} in \cite[Section 2.3]{Gualtieri}) of Dirac structures $D_1$ and $D_2$ will be play an important role:
\[D_1 \star D_2:=\{v+\alpha_1+\alpha_2 \, |\, v+\alpha_i\in D_i\}.\]
In general, $D_1\star D_2$ is a family of Lagrangian spaces, but might fail to be smooth. If it is smooth, then it is automatically a Dirac structure. 

\begin{lemma}
Let $D$ be a coupling Dirac structure for a surjective submersion $p:M\to N$. For any Dirac structure $E$ on $N$, $D\star p^!E$ is a Dirac structure on $M$. Moreover, if $D$ pushes forward to a Dirac structure $F$ on $N$, i.e., if 
\[
p : (M,D) \to (N,F)
\]is a forward Dirac map, then so is
\[
p :(M,D \star p^!E) \to (N,F \star E).
\]
\end{lemma}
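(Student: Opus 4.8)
The plan is to verify the two assertions pointwise, using the coupling hypothesis on $D$ and the normal form decomposition of a coupling Dirac structure. First I would recall that a coupling Dirac structure $D$ for the submersion $p:M\to N$ admits, at each point, a decomposition of the form $D = \mathcal{R}_{\omega}(H) \oplus \mathcal{R}_{-\sigma}(H^{\circ})$, where $H\subset TM$ is the associated Ehresmann connection, $\sigma$ is a vertical bivector, and $\omega$ is a horizontal $2$-form; in particular $H\oplus V^*$ complements $V\oplus H^\circ$ inside $\mathbb{T}M$, where $V=\ker p_*$. The key linear-algebra fact I would isolate and prove is the following: if $D$ is any Lagrangian subspace of $\mathbb{T}_xM$ with $D\cap(V\oplus V^\circ)=0$, and $E$ is any Lagrangian subspace of $\mathbb{T}_{p(x)}N$, then $D\star p^!E$ is Lagrangian of full rank (hence automatically a smooth family as $x$ varies, because both summands of the decomposition of $D$ vary smoothly, and $p^!E$ is smooth since the coupling condition forces $p$ to be transverse to $E$). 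Concretely, I would compute $D\star p^!E$ in terms of the decomposition of $D$: a horizontal vector $u$ together with $\alpha\in T^*M$ lies in $D\star p^!E$ iff $\alpha = \iota_u\omega + \beta$ with $\beta$ horizontal (i.e. $\beta\in p^*T^*N$) and $p_*u + \beta\in E$; a vertical vector $w$ together with $\alpha$ lies in it iff $\alpha = -\sigma^\flat$-component plus an element of $V^\circ$ killed by $E$. This shows $D\star p^!E$ is exactly the gauge/coupling combination of $D$ with the pulled-back Dirac structure $E$, and in particular that it is Lagrangian everywhere and smooth.

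For the second assertion I would argue by compatibility of $\star$ with pushforward. Since $p:(M,D)\to(N,F)$ is forward Dirac and $D$ is coupling with Ehresmann connection $H$, the pushforward $F$ is itself a coupling-type Dirac structure realized through $H$: concretely $F = p_!D$ means $a+\xi\in F$ iff $a = p_*u$ for some $u$ with $u+p^*\xi+\eta\in D$ for some $\eta\in V^\circ$. I would take a section $V+\alpha_1+\alpha_2\in\Gamma(D\star p^!E)$, with $V+\alpha_1\in\Gamma(D)$ and $V+\alpha_2\in\Gamma(p^!E)$, write $\alpha_2 = p^*\beta_2 + \eta$ with $p_*V+\beta_2\in\Gamma(E)$ and $\eta\in\Gamma(V^\circ)$, and push forward: $p_*V+\xi_1+\beta_2$, where $\xi_1$ is the form on $N$ obtained by pushing $V+\alpha_1$ through the forward Dirac map $p:(M,D)\to(N,F)$, so that $p_*V+\xi_1\in\Gamma(F)$. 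Then $p_*V+\xi_1+\beta_2$ lies in $F\star E$ by definition. Running this construction with a frame of sections spanning $D\star p^!E$ exhibits $p_!(D\star p^!E)$ as containing, hence (by Lagrangian rank count) equal to, $F\star E$; that $F\star E$ is itself smooth follows because $E$ pulls back smoothly and $F$ is coupling. The only subtlety to check carefully is that the horizontal two-form entering the decomposition of $D$ and the vertical bivector do not obstruct smoothness of the combined family, but this is immediate from the explicit formulas and the smoothness of $H$.

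I expect the main obstacle to be purely bookkeeping: keeping straight the three pieces of data $(H,\sigma,\omega)$ attached to a coupling Dirac structure and checking that $\star$ interacts with them as a gauge transformation on the horizontal part and as a bivector twist on the vertical part, so that smoothness of $D\star p^!E$ reduces to smoothness of $H$, $p^!E$, and the structure maps of $D$ — all of which hold by hypothesis. No hard analysis or global argument is needed; the entire statement is a fiberwise verification, combined with the general principle (already invoked in the excerpt, e.g.\ \cite[Proposition 5.9]{Bursztyn}) that a smooth family of Lagrangian subspaces which is pointwise the image/preimage of Dirac structures under Dirac-transverse maps is again Dirac. I would present it as: (i) the linear fiberwise statement that $D\star p^!E$ is Lagrangian when $D$ is transverse to $V\oplus V^\circ$; (ii) smoothness from the decomposition of $D$; (iii) the pushforward compatibility by the section-chasing above.
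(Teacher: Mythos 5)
For the first assertion your route is correct but genuinely different from the paper's. You unpack the coupling hypothesis into the Vorobjev-type decomposition $D=\mathcal{R}_{\omega}(H)\oplus\mathcal{R}_{-\sigma}(H^{\circ})$ and compute $D\star p^!E$ explicitly; carried out, this shows $D\star p^!E$ is the direct sum of $\mathcal{R}_{-\sigma}(H^{\circ})$ with the $\mathcal{R}_{\omega}$-gauge of the pullback of $E$ through $H\simeq p^*TN$, which is visibly smooth and Lagrangian. The paper instead uses only the abstract consequence of coupling: $D\oplus(V\oplus V^{\circ})=\mathbb{T}M$ forces $\mathrm{pr}_T(D)+V=TM$, while $\mathrm{pr}_T(p^!E)\supseteq V$, so the anchors of $D$ and $p^!E$ are transverse and the tensor product of tangentially transverse Dirac structures is automatically smooth. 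Your version buys an explicit formula (useful later in the paper anyway); the paper's is shorter and needs no choice of $(H,\sigma,\omega)$. One small misstatement: smoothness of $p^!E$ has nothing to do with the coupling condition --- $p$ is a submersion, so it is transverse to every Dirac structure on $N$.

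For the second assertion there is a genuine gap in your argument as written. You propose to take a general section $V+\alpha_1+\alpha_2$ of $D\star p^!E$ and ``push $V+\alpha_1$ through the forward Dirac map $p:(M,D)\to(N,F)$'' to get a form $\xi_1$ on $N$. A forward Dirac map provides no such operation: an element $V+\alpha_1\in D_x$ is $p$-related to an element of $F_{p(x)}$ only when $\alpha_1\in V^{\circ}=p^*T^*N$, and the vertical summand $\mathcal{R}_{-\sigma}(H^{\circ})$ of $D$ consists precisely of elements whose covector part is \emph{not} basic. So ``running this construction over a frame of sections'' fails on the vertical sections, and the claimed containment does not follow as stated. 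The step can be repaired --- the pointwise pushforward $p_!(L)_x$ by definition only involves elements of the form $v+p^*\xi$, and for those one checks that both constituents $\alpha_1,\alpha_2$ are automatically basic, giving $p_!(D\star p^!E)\subseteq F\star E$ and hence equality by the Lagrangian rank count. But the paper's argument runs in the opposite, and cleaner, direction: given $b=f+\xi\in F\star E$, the forward property of $p:(M,D)\to(N,F)$ lets you lift $f$ to some $d\in D_x$, and then $d+p^*\xi\in(D\star p^!E)_x$ is $p$-related to $b$; this exhibits $F\star E\subseteq p_!(D\star p^!E)$ directly, which is exactly the forward Dirac condition once ranks are compared. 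You should either adopt that direction or restrict your section-chasing to elements with basic covector part.
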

\begin{proof}
The Lagrangian family $D\star p^!E$ is smooth because $D$ and $p^!E$ \emph{tangentially transverse}, that is, their anchors $D \to TM$ and $p^!(E) \to TM$ are transverse. This is so because $D$ is transverse to the Dirac structure corresponding to the fibres of $p$, and $p^!E$ contains all vectors tangent to such fibres.

Assume now that $p_!D=F$, for a Dirac structure $F$ on $N$. Every $b \in (F \star E)_{p(x)}$ is of the form $b=f+\xi$, where
\[
f \in F_{p(x)}, \qquad \xi \in T_{p(x)}^*N, \qquad \pr_T(f)+\xi \in E_{p(x)}.
\]Because $p:(M,D) \to (N,F)$ is forward, $f$ is $p$-related to some $d \in D_x$. Therefore $a:=d + p^*\xi \in (D \star p^!E)_x$ is $p$-related to $b$, which is to say that
\[
p_!(D \star p^!(E)) = p_!(D) \star E. \qedhere
\]
\end{proof}

Applying the lemma to the submersion $r:P\times_G\gg^*\to S$, the coupling Dirac structure $D_{\theta}$ on $P\times_G\gg^*$ and the Poisson structure $\pi_S$, we obtain that 
\[D_{\theta}\star r^!\Gr(\pi_S)\]
is a smooth Dirac structure on $P\times_G\gg^*$. Noting that:
\[L_{\widetilde{\theta}}=\mathcal{R}_{-\dd \widetilde{\theta}}\,p^!\Gr(\pi_S)=\Gr(-\dd \widetilde{\theta})\star p_G^!r^!\Gr(\pi_S),\]
the previous lemma implies:
\begin{proposition}\label{proposition_coupling_Poisson}
The projection map is a forward Dirac map 
\[p_G:(P\times \gg^*,L_{\widetilde{\theta}})\to (P\times_G\gg^*, D_{\theta}\star r^!\Gr(\pi_S)),\]
and we have that:
\[D_{\theta}\star r^!\Gr(\pi_S)=
\big\{(\piv)^{\sharp}\alpha-\alpha \ | \  \alpha\in \Ehc^{\circ}\big\}\oplus 
\big\{\ho(\pi_S)^{\sharp}\beta+\beta +  i_{\ho(\pi_S)^{\sharp}\beta}\Form \ | \  \beta\in V^{\circ}\big\}.
\]where $\ho(\pi_S) \in \mathfrak{X}^2(P \times_G \gg^*)$ is the horizontal lift of $\pi_S$.
\end{proposition}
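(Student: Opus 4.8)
The plan is to combine the two lemmas proved just above: the previous lemma applied to the submersion $r:P\times_G\gg^*\to S$, the coupling Dirac structure $D_{\theta}$ of Proposition \ref{lemma:push:forward:foo}, and the Poisson structure $\pi_S$ on $S$. First I would record the algebraic identity
\[
L_{\widetilde{\theta}}=\mathcal{R}_{-\dd\widetilde{\theta}}\,p^!\Gr(\pi_S)=\Gr(-\dd\widetilde{\theta})\star p_G^!r^!\Gr(\pi_S),
\]
which holds because $p=r\circ p_G$ and because gauge transformation by a two-form $\omega$ can be written as the tangential product $\mathcal{R}_{\omega}(E)=\Gr(\omega)\star E$ for any Dirac structure $E$. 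Since $p_G:(P\times\gg^*,\Gr(-\dd\widetilde{\theta}))\to(P\times_G\gg^*,D_{\theta})$ is a forward Dirac map by Proposition \ref{lemma:push:forward:foo}, the ``moreover'' part of the previous lemma — with $M=P\times_G\gg^*$, $N=S$, $p=r$, $D=D_{\theta}$, $E=\Gr(\pi_S)$, $F=\mathcal{R}_{-\dd\widetilde{\theta}}$'s pushforward… wait, $F$ should be the pushforward of $D_\theta$ under $r$ — rather, I apply functoriality of the tangential product under $p_G$: since $p_G$ is forward Dirac from $\Gr(-\dd\widetilde\theta)$ to $D_\theta$, and $p_G^!r^!\Gr(\pi_S)$ obviously contains all $p_G$-vertical vectors, the composition $\Gr(-\dd\widetilde\theta)\star p_G^!r^!\Gr(\pi_S)$ pushes forward under $p_G$ to $D_\theta\star r^!\Gr(\pi_S)$, and the latter is smooth because $D_\theta$ is a coupling Dirac structure for $r$ (established in the remark following Proposition \ref{lemma:push:forward:foo}), so the previous lemma applies and gives both smoothness and the forward Dirac map statement.

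Next I would compute the explicit formula. Using the Vorobjev description $D_{\theta}=\mathcal{R}_{\Form}(\Ehc)\oplus\mathcal{R}_{-\piv}(\Ehc^{\circ})$ from \eqref{eq:Dirac:Vorobjev}, and the fact that $r^!\Gr(\pi_S)$ consists of all $a+\beta$ with $r_*(\pr_T a)+\ldots$ — concretely, an element of $r^!\Gr(\pi_S)_{p_G(x,\xi)}$ is $w+\beta$ with $\beta\in T^*(P\times_G\gg^*)$, $r_*w=\pi_S^{\sharp}(\nu)$ and $\beta$ pulled back from $\nu+$ (anything in $V^\circ$); more usefully $r^!\Gr(\pi_S)$ contains $V$ and $V^\circ$ and the lift $\ho(\pi_S)^{\sharp}\beta+\beta$ for $\beta\in V^\circ$, where $\ho(\pi_S)$ is the horizontal lift of $\pi_S$ through the Ehresmann connection $\Ehc$. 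The tangential product $D_\theta\star r^!\Gr(\pi_S)$ is then assembled fibrewise as a direct sum over the decomposition $T(P\times_G\gg^*)=\Ehc\oplus V$: on the vertical part $V$, $D_\theta$ contributes $\mathcal{R}_{-\piv}(\Ehc^\circ)=\{(\piv)^{\sharp}\alpha-\alpha\mid \alpha\in\Ehc^\circ\}$ (note $\Ehc^\circ$ annihilates the horizontal subspace, i.e., consists of the ``vertical-dual'' covectors), and tangential multiplication with $r^!\Gr(\pi_S)$ leaves this summand unchanged because $\piv$ is vertical so $(\piv)^\sharp\alpha\in V\subset r^!\Gr(\pi_S)\cap TM$ and the covector $-\alpha$ gets no correction from $\pi_S$; on the horizontal part, $\mathcal{R}_{\Form}(\Ehc)$ tangentially multiplied with the horizontal lift $\{\ho(\pi_S)^{\sharp}\beta+\beta\mid\beta\in V^\circ\}$ produces exactly $\{\ho(\pi_S)^{\sharp}\beta+\beta+\iota_{\ho(\pi_S)^{\sharp}\beta}\Form\mid\beta\in V^\circ\}$, since gauging by $\Form$ adds $\iota_{(\cdot)}\Form$ to the covector part. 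Carefully matching the $\Ehc$-versus-$V$ bookkeeping — in particular checking that the two summands are $L_{\widetilde\theta}$-orthogonal and together Lagrangian, and that $V^\circ$ and $\Ehc^\circ$ play the complementary roles stated — gives the claimed direct-sum decomposition.

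The main obstacle I anticipate is the second paragraph's explicit identification: one must be scrupulous about which complement is meant by $\Ehc^\circ$ and $V^\circ$ (both are subbundles of $T^*(P\times_G\gg^*)$, namely the annihilators of $\Ehc$ and of $V$ respectively), and about the fact that $\ho(\pi_S)$ — the horizontal lift of the bivector $\pi_S$ — is not itself a Poisson structure on $P\times_G\gg^*$, so the formula is genuinely a tangential product and not a gauge transform of a graph. The cleanest route is to verify the $\supseteq$ inclusion by exhibiting, for each $\alpha\in\Ehc^\circ$ and each $\beta\in V^\circ$, explicit elements of $D_\theta$ and of $r^!\Gr(\pi_S)$ whose tangential product yields the stated generators — this is a direct unwinding of the definitions of $\mathcal{R}_{\Form}$, $\mathcal{R}_{-\piv}$ and $\star$ — and then observe that the right-hand side is already a Lagrangian subbundle of the correct rank, so the inclusion is an equality. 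The first paragraph (smoothness and forwardness) is essentially immediate from the two lemmas and requires only the observation that $D_\theta$ is a coupling Dirac structure for $r$, which was recorded in the remark.
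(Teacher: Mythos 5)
Your proposal is correct and follows the same route as the paper: the smoothness and forwardness statements are obtained exactly as in the text, by applying the preceding lemma first with $p=r$, $D=D_{\theta}$, $E=\Gr(\pi_S)$ and then (via the factorization $L_{\widetilde{\theta}}=\Gr(-\dd\widetilde{\theta})\star p_G^!r^!\Gr(\pi_S)$) its ``moreover'' part with $p=p_G$ and $F=D_{\theta}$. The explicit formula, which the paper leaves as an unwritten consequence of \eqref{eq:Dirac:Vorobjev}, is verified by exactly the fibrewise unwinding of $\star$ that you describe, and your closing rank/Lagrangian argument legitimately upgrades the inclusion to an equality.
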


Finally, we obtain an explicit formula for the Poisson structure $\pi_0$:

\begin{corollary}\label{corollary:global:form}
Let $U/G\subset P\times_G\gg^*$ be the open set of elements $z=p_G(y,\eta)$, where the map: 
\[\mathrm{Id}+B(z) :T^*_{p_S(y)}S\to T^*_{p_S(y)}S,\quad  \lambda \mapsto \lambda+\langle \eta, \cu(\pi_S^{\sharp}\lambda,\cdot)_y\rangle\]
is invertible. On $U/G$, $D_{\theta}\star r^!\Gr(\pi_S)$ corresponds to the graph of the Poisson structure \[\pi_{0}:=\pih-\piv\in \X^2(U/G),\]
where    
\[\pih=\ho(\gamma)\in \Gamma(\wedge^2\Ehc),\quad \gamma_{z}^{\sharp}:=\pi_{S}^{\sharp}\circ (\mathrm{Id}+B(z))^{-1}.\]
\end{corollary}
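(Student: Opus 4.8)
The goal is to identify when the Dirac structure $D_{\theta}\star r^!\Gr(\pi_S)$, described explicitly in Proposition \ref{proposition_coupling_Poisson}, is the graph of a bivector, and then to read off that bivector. The plan is to work pointwise over $z=p_G(y,\eta)\in P\times_G\gg^*$, using the splitting $T_z(P\times_G\gg^*)=\Ehc_z\oplus V_z$ coming from the linear connection on the associated bundle, together with the dual splitting $T^*_z(P\times_G\gg^*)=V_z^{\circ}\oplus \Ehc_z^{\circ}$.

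First I would analyze the two summands in Proposition \ref{proposition_coupling_Poisson} separately. The summand $\{(\piv)^{\sharp}\alpha-\alpha \mid \alpha\in \Ehc^{\circ}\}$ is transparently the graph of the vertical bivector restricted to the fibre directions: since $(\piv)^{\sharp}$ maps $\Ehc^{\circ}$ (which may be identified with the fibre cotangent space) into $V$, this summand is $\Gr(-\piv)$ intersected with $V\oplus \Ehc^{\circ}$ in an obvious sense; it is always a graph. The second summand $\{\ho(\pi_S)^{\sharp}\beta+\beta+\iota_{\ho(\pi_S)^{\sharp}\beta}\Form \mid \beta\in V^{\circ}\}$ is where the obstruction lives. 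Identifying $V^{\circ}$ with $T^*_{p_S(y)}S$ via $r^*$, and writing $\beta=r^*\lambda$, the horizontal vector component is $\ho(\pi_S^{\sharp}\lambda)$ and the covector component is $r^*\lambda+\iota_{\ho(\pi_S^{\sharp}\lambda)}\Form$. The covector component, paired against a horizontal lift $\ho(u_2)$, produces $\langle\lambda,u_2\rangle+\langle\eta,\cu(\pi_S^{\sharp}\lambda,u_2)_y\rangle=\langle(\mathrm{Id}+B(z))^{\top}\lambda,u_2\rangle$ after transposing; so the map $\lambda\mapsto(\text{covector part})|_{\Ehc}$ is exactly $\mathrm{Id}+B(z)$ (up to the transpose convention, which I would fix carefully). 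Hence this summand is a graph over the horizontal cotangent directions precisely when $\mathrm{Id}+B(z)$ is invertible --- which is the definition of $U/G$ --- and then, substituting $\lambda=(\mathrm{Id}+B(z))^{-1}\lambda'$, the horizontal vector becomes $\pi_S^{\sharp}(\mathrm{Id}+B(z))^{-1}\lambda'=\gamma_z^{\sharp}\lambda'$, whose horizontal lift is $\pih$ by definition.

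Next I would assemble the two pieces. The decomposition in Proposition \ref{proposition_coupling_Poisson} is a direct sum, and the two summands live over complementary cotangent directions ($\Ehc^{\circ}$ and $V^{\circ}$) with vector components in complementary tangent directions ($V$ and $\Ehc$ respectively, since $\ho(\pi_S)^{\sharp}$ takes values in $\Ehc$ and $\Form$-correction stays horizontal as $\Form$ is a basic-type two-form pulled from $\wedge^2 V^{\circ}$). Therefore on $U/G$ the whole Dirac structure is the graph of $\pih-\piv$: the horizontal block contributes $\pih=\ho(\gamma)$, the vertical block contributes $-\piv$, and there is no cross term because $\pi_S^{\sharp}$ has no vertical output and $\piv$ has no horizontal output. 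That $\pih-\piv$ is genuinely Poisson (i.e. $[\pi_0,\pi_0]=0$) follows for free: it is the graph of an honest Dirac structure, namely $D_{\theta}\star r^!\Gr(\pi_S)$, which is integrable by the Lemma preceding Proposition \ref{proposition_coupling_Poisson} and by $D_{\theta}$ being Dirac (Proposition \ref{lemma:push:forward:foo}); a Dirac structure that happens to be the graph of a bivector is automatically the graph of a Poisson bivector.

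\textbf{Main obstacle.} The only real work is the bookkeeping in the middle step: correctly matching the abstract ``graph over a complement'' criterion with the explicit operator $\mathrm{Id}+B(z)$, including getting the transpose/duality conventions right so that the condition ``$D_{\theta}\star r^!\Gr(\pi_S)$ is a graph at $z$'' comes out equivalent to ``$\mathrm{Id}+B(z)$ invertible'' rather than to the invertibility of its adjoint (which is equivalent, but one wants the stated form), and then inverting to get $\gamma_z^{\sharp}=\pi_S^{\sharp}\circ(\mathrm{Id}+B(z))^{-1}$ on the nose. A secondary point to handle with a sentence is that $U/G$ is open (clear, since invertibility of $\mathrm{Id}+B(z)$ is an open condition and $B(p_G(y,0))=0$, so $P\subset U/G$) and $G$-invariant/well-defined as a subset of $P\times_G\gg^*$ (clear from equivariance of $\cu$). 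Everything else is a direct consequence of Proposition \ref{proposition_coupling_Poisson} and linear algebra.
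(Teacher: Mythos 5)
Your argument is correct and is precisely the intended derivation: the paper states this as an immediate corollary of Proposition \ref{proposition_coupling_Poisson} without a separate proof, and reading the two summands of that decomposition as graphs over $\Ehc^{\circ}$ and $V^{\circ}$ respectively --- with the horizontal block a graph exactly when $\mathrm{Id}+B(z)$ is invertible --- is the whole content. One small simplification: no transpose bookkeeping is needed, since $\iota_{\ho(\pi_S^{\sharp}\lambda)}\Form$ evaluated on $\ho(u_2)$ is by definition $\langle \eta,\cu(\pi_S^{\sharp}\lambda,u_2)_y\rangle$, which is literally $B(z)\lambda$ as written in the statement.
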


\begin{remark}\rm
Using the calculations from Proposition \ref{lemma:push:forward:foo}, it is easy to see that $\Gr(-\dd \widetilde{\theta})$ is also a coupling Dirac structure for the projection $p:P\times \gg^*\to S$, and its corresponding Vorobjev triple has 2-form $p_G^*\Form$, Ehresmann connection $E_{\theta}=\ker\theta\times 0\subset TP\times T\gg^*$ and vertical Poisson structure $\Pi^{\mathrm{v}}$, which is such that its restriction to each fiber $P_x\times \gg^*$ corresponds to minus the inverse of the canonical symplectic structure on $T^*P_x$, under the dual of the action $(\act_{P_x})^*: TP_x^* \diffto P_x\times \gg^*$. Arguments similar to the ones above show further that $L_{\widetilde{\theta}}$ corresponds to a Poisson structure exactly on the set $U$ described in the corollary above, and this Poisson structure admits a similar description:
\[\pi_S^{-\dd \widetilde{\theta}}=\ho(\gamma)+\Pi^{\mathrm{V}},\]
where $\gamma$ is the same as above, but where $\ho$ now denotes the horizontal lift with respect to $E_{\theta}$.
\end{remark}

\subsection*{Linearization around Poisson submanifolds} 

Corollary \ref{corollary:global:form} implies Corollary \ref{corollary:linearizable} from the Introduction. For this, we will briefly discuss the linear models around Poisson submanifolds from \cite{FeMa22}.

Consider a bundle of Lie algebras $(\ka,[\cdot,\cdot]_{\ka})\to S$ over a Poisson manifold $(S,\pi_S)$. We will call a pair  $(\nabla,U)$ a \emph{coupling data} for $\ka$ and $\pi_S$, if $\nabla$ is a connection on $\ka$, $U\in \Gamma(TS\otimes T^*S\otimes \ka)$ is a tensor field, and they satisfy the structure equations:
\begin{enumerate}
\item[(S1)] the connection $\nabla$ preserves the Lie bracket $[\cdot,\cdot]_{\ka}$, i.e.,
\[\nabla_{X}[\xi,\eta]_{\ka}=[\nabla_{X}\xi,\eta]_{\ka}+[\xi,\nabla_{X}\eta]_{\ka};\]
\item[(S2)] the curvature of $\nabla$ is related to $[U,\cdot]$ as follows:
\[\nabla_{\pi^{\sharp}_S(\alpha)}\nabla_{X}-\nabla_{X}\nabla_{\pi^{\sharp}_S(\alpha)}-\nabla_{[\pi^{\sharp}_S(\alpha),X]}=[U(\alpha,X),\cdot ]_{\ka};\]
\item[(S3)] $U$ satisfies the skew-symmetry condition
\begin{equation*}%\label{eq:U:skew} 
U(\alpha,\pi^{\sharp}_S(\beta))=- U(\beta,\pi^{\sharp}_S(\alpha)).
\end{equation*}
and the ``mixed'' cocycle-type equation:
\begin{align*}
\nabla_{\pi^{\sharp}_S(\alpha)}U(\beta,X)&-\nabla_{\pi^{\sharp}_S(\beta)}U(\alpha,X)+\nabla_{X}U(\alpha,\pi_S^{\sharp}(\beta))\\
&+U(\alpha,[\pi^{\sharp}_S(\beta),X])-U(\beta,[\pi_S^{\sharp}(\alpha),X])=U([\alpha,\beta]_{\pi_S},X),
\end{align*}
\end{enumerate}
for all $X\in \X^1(S)$, $\alpha,\beta\in \Omega^1(S)$, $\xi,\eta\in \Gamma(\ka)$,and where 
\begin{align*}
    & [\cdot,\cdot]_{\pi_S} : \Omega^1(M) \times \Omega^1(M) \to \Omega^1(M), & [\alpha,\beta]_{\pi_S}:=\mathscr{L}_{\pi_S^{\sharp}(\alpha)}\beta-\iota_{\pi_S^{\sharp}(\beta)}\dd\alpha
\end{align*}stands for the bracket induced by $\pi_S$ on one-forms on $S$.

A coupling data induces first of all a Lie algebroid structure on $A:=T^*S\oplus \ka$, with anchor $\rho(\alpha,\xi):=\pi_S^{\sharp}(\alpha)$ and Lie bracket:
\begin{equation*}%\label{equation:bracket:splitting}
[(\alpha,\xi),(\beta,\eta)]:=([\alpha,\beta]_{\pi_S},[\xi,\eta]_{\ka}+\nabla_{\pi_S^{\sharp}(\alpha)}\eta-\nabla_{\pi_S^{\sharp}(\beta)}\xi+U(\alpha,\pi_S^{\sharp}(\beta))).
\end{equation*}
As the formula for the bracket shows, we have a short exact sequence of Lie algebroids
\[0\rmap (\ka,[\cdot,\cdot]_{\ka})\rmap (A,[\cdot,\cdot])\rmap (T^*S,[\cdot,\cdot]_{\pi_S})\rmap 0.\]

Secondly, the coupling data induces a Poisson structure $\pi_0$, which is a \textbf{linear model}. It is defined on the neighborhood of $S$ in $\ka^*$, consisting of points $z\in \ka^*$ where the map $\mathrm{Id}+\langle z, U\rangle:T^*_{p(z)}S\to T^*_{p(z)}S$ is invertible, and is given by
\[\pi_0=\pi_{\ka}+\hor(\gamma),\]
where $\pi_{\ka}$ is the vertical, fiberwise linear Poisson structure on $\ka^*$, $\hor$ denotes the horizontal lift with respect to $\nabla$ and $\gamma\in \wedge^2 TS$ 
\[\gamma_z^{\sharp}=\pi_S^{\sharp}\circ (\mathrm{Id}+\langle z, U\rangle)^{-1}.\]

\begin{example}\label{ex:principal:type}\rm
A principal $G$-bundle $P\to S$ gives rise to a transitive Lie algebroid, $A(P):=TP/G$, with bracket coming from the Lie bracket on $G$-invariant vector fields:
\[\Gamma(A(P))\simeq \mathfrak{X}^1(P)^G.\]
Being transitive means that $A(P)$ fits into a sort exact sequence:
\begin{equation}\label{eq:sesAtiyah}
0\rmap \ka\rmap A(P)\stackrel{\rho}{\rmap} TS\rmap 0,
\end{equation}
where $\rho$ denotes the anchor of $A(P)$. The bundle of isotropy Lie algebras can be canonically identified with the bundle adjoint bundle 
$P\times_G\gg\simeq \ka$, and the map at the level of sections is given by:
\[\xi\in C^{\infty}(P;\gg)^G\mapsto \tilde{\xi}\in \mathfrak{X}(P)^G,\quad \tilde{\xi}_{x}:=\act_P(\xi_x),\]
where we think of sections of $P\times_G\gg$ as $G$-equivariant maps $P\to \gg$, and we regard sections of $\ka$ as $G$-invariant vertical vector fields on $P$. We will use this isomorphism to further identify $\ka=P\times_G\gg$. However, note that under this identification the Lie bracket $[\cdot,\cdot]_{\ka}$ corresponds to the opposite of the fiber-wise Lie bracket on $P\times_G \gg$ coming from $\gg$, i.e., 
\begin{equation}\label{the_other_bracket}
[\xi,\eta]_{\ka,x}=-[\xi_x,\eta_x]_{\gg}.
\end{equation}

Principal connections on $P$ are in 1-to-1 correspondence with splittings of the short exact sequence \eqref{eq:sesAtiyah}. Fix a principal connection $\theta$. The corresponding splitting sends $X\in \mathfrak{X}(S)$ to its horizontal lift $\mathrm{h}_{\theta}(X)$. Under the resulting isomorphism $A(P)\simeq TS\oplus \ka$ the Lie bracket of $A(P)$ takes the form:
\begin{equation*}%\label{equation:bracket:splitting}
[(X,\xi),(Y,\eta)]:=([X,Y],[\xi,\eta]_{\ka}+\nabla_{X}\eta-\nabla_{Y}\xi+\cu(X,Y)),
\end{equation*}
where $\nabla$ is a linear connection on $\ka$ and $\cu\in\Omega^2(S;\ka)$ is the curvature of $\theta$. The Jacobi identity for the bracket implies the following relations for all $X,Y,Z\in \X^1(S)$,  $\xi,\eta\in \Gamma(\ka)$: 
\begin{enumerate}
\item[(P1)] the connection $\nabla$ preserves the Lie bracket $[\cdot,\cdot]_{\ka}$, i.e.,
\[\nabla_{X}[\xi,\eta]_{\ka}=[\nabla_{X}\xi,\eta]_{\ka}+[\xi,\nabla_{X}\eta]_{\ka};\]
\item[(P2)] the curvatures of $\nabla$ and $\theta$ are related by: 
\[\nabla_{X}\nabla_{Y}-\nabla_{Y}\nabla_{X}-\nabla_{[X,Y]}=[\cu(X,Y),\cdot ]_{\ka};\]
\item[(P3)] $\cu$ satisfies the second Bianchi identity: 
\begin{equation*}
\nabla_{X}\cu(Y,Z)+\cu([X,Y],Z)+c.p.=0.
\end{equation*}
\end{enumerate}

The equations above imply that the pair $(\nabla,U)$, where
\[U(\alpha,X):=\cu(\pi_S^{\sharp}(\alpha),X)\]
is a coupling data for the bundle of Lie algebras $\ka=P\times_G\gg$ and $(S,\pi_S)$, for any Poisson structure $\pi_S$ on the base. The resulting Lie algebroid is isomorphic to the product $A(P)\times_{TS}T^*S$. The resulting linear model is precisely the Poisson structure obtained in Corollary \ref{corollary:global:form}, where we note that, by \eqref{the_other_bracket}, we have that $\pi_{\ka}=-\piv$. 

These special linear models, coming from principal bundles, are called of \emph{principal type} in \cite{FeMa22}.
\end{example}

We put now these construction into the context of local models around Poisson submanifolds. Consider a Poisson submanifold $(S,\pi_S)$ of a Poisson manifold $(M,\pi_M)$. The restriction to $S$ of the Lie algebroid $T^*M$ fits into a short exact sequence of Lie algebroids:
\begin{equation}\label{eq:short:exact:sequence}
0\rmap \ka\rmap T^*_SM\rmap T^*S\rmap 0,    
\end{equation}
where the bundle of Lie algebras $\ka$ can be identified with the conormal bundle of $S$ in $M$. This short exact sequence is a geometric way of encoding the \emph{first order jet} of $\pi$ along $S$. We call the short exact sequence \textbf{partially split} if there exist:
\begin{itemize}
\item a coupling data $(\nabla,U)$ for $\ka$ and
\item a vector bundle splitting of \eqref{eq:short:exact:sequence} which induces a Lie algebroid isomorphism $T^*_SM\simeq T^*S\oplus \ka$, where $T^*S\oplus \ka$ is endowed with the Lie algebroid structure coming from $(\nabla,U)$.
\end{itemize}
If this is the case, then the corresponding linear model $(O,\pi_0)$, where $S\subset O\subset \ka^*$, is a first order approximation of $\pi$ around $S$, in the sense that 
\begin{itemize}
    \item $(S,\pi_S)$ is a Poisson submanifold of $(O,\pi_0)$;
    \item the short exact sequences of the restricted Lie algebroids are isomorphic.
\end{itemize}
Moreover, if $T^*_SM$ is partially split, then different coupling data produce linear models that are isomorphic around $S$. 

The Poisson manifold $(M,\pi_M)$ is said to be \textbf{linearizable} around the Poisson submanifold $(S,\pi_S)$ if its short exact sequence \eqref{eq:short:exact:sequence} is partially split, and a neighborhood of $S$ in $(M,\pi_M)$ is Poisson-diffeomorphic to a neighborhood of $S$ in the linear model $(O,\pi_0)$. 

Since any linear model is linearizable, using Example \ref{ex:principal:type}, Corollary \ref{corollary:global:form} and the Main Theorem, we obtain Corollary \ref{corollary:linearizable} from the Introduction.

\section{Other values of the moment map}\label{sec : Other values of the moment map}

The Main Theorem has a natural generalization to other values of the moment map. First, we explain the conditions on the value, and some of its consequences.

\subsection*{Conditions on the value of the moment map}
Let $\lambda \in \gg^*$. We denote by $G_{\lambda}$ the stabilizer of $\lambda$ with respect to the coadjoint action, and by $\mathcal{O}_{\lambda}=G\cdot \lambda$ the coadjoint orbit through $\lambda$, which comes equipped with its symplectic form $\omega_{{\lambda}} \in \Omega^2(\mathcal{O}_{\lambda})$. We will assume that $\lambda$ satisfies two assumptions, which we call \emph{split} and \emph{slice}; for a detailed discussion of these conditions, see Section 2.3.1 in \cite{GLS96}.
\begin{itemize}
    \item [($\lambda$1)] \textbf{Split:} the Lie subalgebra $\gg_{\lambda}\subset \gg$ has a $G_{\lambda}$-invariant complement $\mathfrak{c}\subset \gg$. 
\end{itemize}
This condition on $\lambda$ is called \emph{split} in \cite{GLS96}, \emph{reductive} in \cite{Montgomery}, and \emph{Molino's condition} in \cite{Wein85E}. We denote by $q:\gg \to \gg_{\lambda}$ the $G_{\lambda}$-equivariant projection associated with the decomposition $\gg=\gg_{\lambda}\oplus\mathfrak{c}$. We will 
use more often the affine inclusion: 
\begin{equation}\label{eq:j:map}
j:\gg^*_{\lambda}\hookrightarrow \gg^*,\qquad j(\xi)=\lambda+q^*(\xi).
\end{equation}

Since $j(\gg_{\lambda}^*)$ is a transverse and complementary to the orbit $\mathcal{O}_{\lambda}$, it follows that a neighborhood of $\lambda$ in $j(\gg_{\lambda}^*)$ is a Poisson transversal \cite{Frejlich_Marcut}. It is well-known \cite{Molino,Wein85E} that the induced Poisson structure on the Poisson transversal is linearizable around $\lambda$ via the isomorphism $j$. Moreover, $j(\gg_{\lambda}^*)$ is a Poisson-Dirac submanifold \cite[Illustration 2]{PT1.5}, and globally we have that:
\[
j^!\mathrm{Gr}(\pi_{\gg}) = \mathrm{Gr}(\pi_{\gg_{\lambda}}).
\]

The abstract normal bundle of the orbit $\mathcal{O}_{\lambda}$ is $G$-equivariantly isomorphic to $G\times_{G_{\lambda}}\gg^*_{\lambda}$. In fact $j$ induces a $G$-equivariant ``exponential map'':
\begin{align*}
 & \epsilon: G \times_{G_{\lambda}} \gg_{\lambda}^* \rmap \gg^*, & \epsilon([g,\xi]):= g \cdot j(\xi).
\end{align*} 
This map is called  \emph{Molino's exponential map} in \cite[\S 1.3.]{Montgomery}, where the Poisson structure corresponding to $\epsilon^!\mathrm{Gr}(\pi_{\gg})$ is described in detail. In particular, it is shown that this Poisson structure coincides with the construction from Proposition \ref{proposition_coupling_Poisson} for the principal $G_{\lambda}$-bundle $P:=G$ over the symplectic manifold $(S,\pi_S):=(\mathcal{O}_{\lambda},\omega_{\lambda}^{-1})$, and moreover that this Poisson structure blows up exactly at the critical points of $\epsilon$. We prove next a related property: 

\begin{lemma}\label{lemma:split}
The following three properties describe the same $G_{\lambda}$-invariant, open neighborhood 
\[0\in \mathcal{T}_{\mathrm{max}}\subset \gg^{*}_{\lambda}:\]
\begin{enumerate}[i)]
    \item the points $\xi\in \gg_{\lambda}^*$ for which $j(\gg_{\lambda}^*)$ is a Poisson transversal around $j(\xi)$;
    \item the points $\xi\in \gg_{\lambda}^*$ for which $[g,\xi]\in G\times_{G_{\lambda}}\gg^*_{\lambda}$ is a regular point of $\epsilon$, for some (and hence all) $g\in G$;  
    \item the points $\xi\in \gg_{\lambda}^*$ for which $\gg_{j(\xi)}\subset \gg_{\lambda}$. 
\end{enumerate}
\end{lemma}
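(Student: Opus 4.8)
The plan is to prove the three characterizations coincide by establishing a cycle of implications, with the key computational input being an explicit description of the differential of Molino's exponential map $\epsilon$. I would first set up notation: write $T_{[g,\xi]}(G\times_{G_\lambda}\gg_\lambda^*) \cong (\mathfrak{c}\oplus\gg_\lambda^*)$ after trivializing using $g$ and the splitting $\gg = \gg_\lambda\oplus\mathfrak{c}$, and compute $\dd\epsilon$ at $[g,\xi]$. By $G$-equivariance it suffices to work at $g=e$. Using $\epsilon([g,\xi]) = g\cdot j(\xi)$ and $j(\xi) = \lambda + q^*(\xi)$, the differential in the $\gg_\lambda^*$-direction is essentially $q^*$ (injective, landing in the annihilator of $\mathfrak{c}$), while the differential in the $\mathfrak{c}$-direction at $[e,\xi]$ is $v\mapsto \act_{\gg^*}(v)_{j(\xi)} = \operatorname{ad}_v^* j(\xi)$. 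Hence $[e,\xi]$ is a regular point of $\epsilon$ if and only if $v\mapsto \operatorname{ad}_v^* j(\xi)$, restricted to $\mathfrak{c}$, is injective modulo $\operatorname{Im}(q^*) = \mathfrak{c}^\circ$ — equivalently, $\mathfrak{c}\cap \gg_{j(\xi)} = 0$.

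Next I would relate this to condition iii). Since $\gg = \gg_\lambda\oplus\mathfrak{c}$, the condition $\mathfrak{c}\cap\gg_{j(\xi)} = 0$ together with $\gg_\lambda\subset\gg_{j(\xi)}$ would give $\gg_{j(\xi)} = \gg_\lambda$; conversely $\gg_{j(\xi)}\subset\gg_\lambda$ immediately forces $\mathfrak{c}\cap\gg_{j(\xi)} = 0$. So the real content is the inclusion $\gg_\lambda\subset\gg_{j(\xi)}$, i.e. that every element of $\gg_\lambda$ fixes $j(\xi) = \lambda + q^*(\xi)$ under the coadjoint action. This holds because $\gg_\lambda$ fixes $\lambda$ by definition, and fixes $q^*(\xi)$ since $q$ is $G_\lambda$-equivariant (so $q^*$ intertwines the coadjoint actions and $\xi\in\gg_\lambda^*$ is acted on by $\gg_\lambda$... wait, no — $\xi$ need not be $\gg_\lambda$-fixed). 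I would be more careful here: what is true is $\gg_{j(\xi)}\subseteq\gg_\lambda$ need not hold for all $\xi$, and that is precisely the defining condition of $\mathcal{T}_{\max}$ in iii); but the reverse, that $\gg_\lambda\cap\gg_{j(\xi)}$ is governed by the $\gg_\lambda$-action on $\xi$, requires care. The cleanest route: show $\gg_{j(\xi)}\cap\gg_\lambda = (\gg_\lambda)_\xi$ (stabilizer of $\xi$ under the coadjoint $\gg_\lambda$-action), using $\gg_\lambda$-equivariance of $q^*$ and the fact that $\gg_\lambda$ annihilates $\lambda$. Then iii) says $\gg_{j(\xi)}$ is entirely contained in $\gg_\lambda$, hence equals $(\gg_\lambda)_\xi$.

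For the equivalence with i), I would invoke the criterion for a submanifold to be a Poisson transversal: $j(\gg_\lambda^*)$ is a Poisson transversal around $j(\xi)$ iff $T_{j(\xi)}j(\gg_\lambda^*) + T_{j(\xi)}\mathcal{L} = T_{j(\xi)}\gg^*$, where $\mathcal{L}$ is the symplectic leaf of $\pi_\gg$ through $j(\xi)$, namely the coadjoint orbit $\mathcal{O}_{j(\xi)}$. Since $T_{j(\xi)}\mathcal{O}_{j(\xi)} = \operatorname{Im}(\operatorname{ad}_\cdot^* j(\xi)) = \pi_\gg^\sharp(T_{j(\xi)}^*\gg^*)$ and $T_{j(\xi)}j(\gg_\lambda^*) = \operatorname{Im}(q^*) = \mathfrak{c}^\circ$ (under $T_{j(\xi)}^*\gg^*\cong\gg$), transversality becomes $\mathfrak{c}^\circ + \operatorname{ad}_\gg^* j(\xi) = \gg^*$; dualizing, this is exactly $\mathfrak{c}\cap\gg_{j(\xi)} = 0$, matching the regular-point condition from ii). That closes the loop i) $\Leftrightarrow$ ii) $\Leftrightarrow$ iii). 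Finally, openness and $G_\lambda$-invariance: each condition is manifestly $G_\lambda$-invariant (condition ii) by its "some/all $g$" phrasing, condition iii) since $\gg_\lambda$ is $G_\lambda$-invariant and $\gg_{j(h\cdot\xi)} = \operatorname{Ad}_h\gg_{j(\xi)}$ for $h\in G_\lambda$), and $0\in\mathcal{T}_{\max}$ since $j(0) = \lambda$ gives $\gg_{j(0)} = \gg_\lambda$; openness follows because being a Poisson transversal (equivalently, a regularity/transversality condition) is an open condition, or directly because $\xi\mapsto \dim(\mathfrak{c}\cap\gg_{j(\xi)})$ is upper semicontinuous.

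\textbf{Main obstacle.} The only genuinely delicate point is keeping the identifications $T_{j(\xi)}^*\gg^*\cong\gg$, $T_{j(\xi)}\gg^*\cong\gg^*$, and the various annihilators straight while computing $\dd\epsilon$ — in particular verifying that the $\mathfrak{c}$-component of $\dd\epsilon_{[e,\xi]}$ is precisely the infinitesimal coadjoint action $v\mapsto\operatorname{ad}_v^* j(\xi)$ and correctly accounting for its interaction with $\operatorname{Im}(q^*)$. Everything else is either a direct application of the Poisson transversal transversality criterion or elementary linear algebra with the splitting $\gg=\gg_\lambda\oplus\mathfrak{c}$.
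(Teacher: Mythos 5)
Your overall architecture (compute $\dd\epsilon$, translate each condition into linear algebra at $j(\xi)$, check invariance and openness at the end) is sound, and the computation of $\epsilon_*$ at $[1,\xi]$ matches the paper's. But there is one genuine gap, and it infects all three of your equivalences, not just the one you flag. You funnel everything into the single condition $\mathfrak{c}\cap\gg_{j(\xi)}=0$, whereas the condition that actually comes out of each of i), ii), iii) is the nondegeneracy of the $2$-form $\eta_{\xi}:=\pi_{\gg,j(\xi)}|_{\wedge^2\mathfrak{c}}$. These two conditions coincide only because of the block-diagonal decomposition of $\pi_{\gg}$ along the affine slice,
\[
\pi_{\gg,j(\xi)}(v_1+w_1,v_2+w_2)=\pi_{\gg_{\lambda},\xi}(v_1,v_2)+\pi_{\gg,j(\xi)}(w_1,w_2),\qquad v_i\in\gg_{\lambda},\ w_i\in\mathfrak{c},
\]
i.e.\ the vanishing of the cross terms $\la j(\xi),[v,w]\ra$ for $v\in\gg_{\lambda}$, $w\in\mathfrak{c}$ (this uses both $\mathrm{ad}_v^*\lambda=0$ and the $\mathrm{ad}_{\gg_{\lambda}}$-invariance of $\mathfrak{c}$; the paper quotes it as the Poisson--Dirac property of $j(\gg_{\lambda}^*)$ from \cite{PT1.5}). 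Concretely: in ii), the kernel of $\epsilon_{*,[1,\xi]}$ consists of the $w\in\mathfrak{c}$ with $\mathrm{ad}_w^*j(\xi)\in\mathfrak{c}^{\circ}$, which is $\ker\eta_{\xi}$, a priori \emph{larger} than $\mathfrak{c}\cap\gg_{j(\xi)}$; your asserted equivalence "injective modulo $\mathfrak{c}^{\circ}$ $\Leftrightarrow$ $\mathfrak{c}\cap\gg_{j(\xi)}=0$" needs the cross terms to vanish. In iii), the implication $\mathfrak{c}\cap\gg_{j(\xi)}=0\Rightarrow\gg_{j(\xi)}\subset\gg_{\lambda}$ is false as pure linear algebra (a subspace can meet $\mathfrak{c}$ trivially without lying in $\gg_{\lambda}$); what rescues it is that block-diagonality forces $\gg_{j(\xi)}=(\gg_{\lambda})_{\xi}\oplus\ker\eta_{\xi}$, adapted to the splitting. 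Your proposed repair ($\gg_{j(\xi)}\cap\gg_{\lambda}=(\gg_{\lambda})_{\xi}$) is true but does not by itself yield this adapted decomposition.

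A second, independent issue is in i): transversality to the symplectic leaf, $TX+T\mathcal{L}=TM$, is only a \emph{necessary} condition for $X$ to be a Poisson transversal; the correct criterion is $TX\oplus\pi^{\sharp}(N^*X)=TM$. Dualizing the latter with $N^*j(\gg_{\lambda}^*)=\mathfrak{c}$ gives precisely $\ker\eta_{\xi}=0$, not $\mathfrak{c}\cap\gg_{j(\xi)}=0$; again the two agree only via the decomposition above. So the fix for your whole argument is a single short computation: prove the vanishing of the cross terms, after which all three conditions visibly reduce to the nondegeneracy of $\eta_{\xi}$ --- which is exactly how the paper proceeds.
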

\begin{proof}
By \cite[Illustration 2]{PT1.5}, we have the following decomposition of $\pi_{\gg}$ along $j(\gg_{\lambda}^*)$:
\begin{equation}\label{P_along_PT}
\pi_{\gg,j(\xi)}(v_1+w_1,v_2+w_2) = \pi_{\gg_{\lambda},\xi}(v_1,v_2)+\pi_{\gg,j(\xi)}(w_1,w_2),
\end{equation}
where $v_1,v_2 \in \gg_{\lambda}$ and $w_1,w_2 \in \mathfrak{c}$. Hence $j(\gg_{\lambda}^*)$ is a Poisson transversal at $j(\xi)$,
\[
T_{j(\xi)}\gg^* = T_{j(\xi)}j(\gg_{\lambda}^*) \oplus \pi^{\sharp}_{\gg,j(\xi)}(N^*j(\gg_{\lambda}^*)),
\]exactly when the following 2-form is non-degenerate:
\begin{align*}
    & \eta_{\xi} \in \wedge^2\mathfrak{c}^*, & \eta_{\xi}(w_1,w_2):=\pi_{\gg,j(\xi)}(w_1,w_2).
\end{align*}

Since $\epsilon$ is $G$-equivariant, $[g,\xi]$ is a regular point of $\epsilon$ if and only if $[1,\xi]$ is a regular point of $\epsilon$. Under the canonical identification 
\[\mathfrak{c}\oplus \mathfrak{c}^{\circ}\simeq T_{[1,\xi]}(G\times_{G_{\lambda}}\gg_{\lambda}^*),\]
the differential of $\epsilon$ becomes:
\begin{align*}
    & \epsilon_*[w,\zeta] = -\act_{\gg^*}(w)+\zeta, & (w,\zeta) \in \mathfrak{c} \times \mathfrak{c}^{\circ}.
\end{align*}
Therefore elements in 
$\ker \epsilon_{*,[1,\xi]}$ are of the form $[w,\act_{\gg^*}(w)]$, with $\act_{\gg^*}(w)\in \mathfrak{c}^{\circ}$. Since $\act_{\gg^*}(w)=\pi_{\gg,j(\xi)}^{\sharp}(w)$, by \eqref{P_along_PT}, this last condition is equivalent to $w\in \ker\eta_{\xi}$. So i) and ii) describe the same set. 
 
 Finally, using that the stabilizer Lie algebra of a point in $\gg^*$ coincides with the Poisson-geometric isotropy Lie algebra, and the explicit description of $\pi_{\gg}$ \eqref{P_along_PT}, we obtain
\[\gg_{j(\xi)}=\ker(\pi_{\gg})_{j(\xi)}=(\gg_{\lambda})_{\xi}\oplus \ker \eta_{\xi}\subset \gg_{\lambda}\oplus\mathfrak{c}.\]
Hence $\gg_{j(\xi)} \subset \gg_{\lambda}$ iff $\eta_{\xi}$ is nondegenerate. Hence items i) and iii) describe the same set. 
\end{proof}

In addition to ($\lambda$1), we will assume also the following, stronger condition on $\lambda$:

\begin{itemize}
    \item[($\lambda$2)] \textbf{Slice:} the point $\lambda$ admits an \emph{(affine) slice}, i.e., there is an open, $G_{\lambda}$-invariant set
    \[0\in \mathcal{T} \subset \gg_{\lambda}^*\]
    with the property that $j(\mathcal{T})$ is a slice for the action --- meaning that, for $g\in G$, \[j(\mathcal{T})\cap (g\cdot j(\mathcal{T}))\neq \emptyset\quad \implies \quad g\in G_{\lambda}.\]
\end{itemize}

It is easy to see that the slice condition is equivalent to the restriction
\begin{equation}\label{eq:exp:rest}
\epsilon: G\times_{G_{\lambda}}\mathcal{T}\rmap \gg^*
\end{equation}
being injective. We also note the following:
\begin{lemma}
Any slice $\mathcal{T}$ is contained in the set $\mathcal{T}_{\mathrm{max}}$ defined in Lemma \ref{lemma:split}. Hence $j(\mathcal{T})$ is a Poisson transversal, and the exponential map $\epsilon$ restricts to a $G$-equivariant diffeomorphism, from $G\times_{G_{\lambda}}\mathcal{T}$ onto its image. In particular, the orbit $\mathcal{O}_{\lambda}$ is an embedded submanifold.
\end{lemma}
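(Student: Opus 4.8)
The plan is to prove the three claims of the lemma in sequence: that $\mathcal{T}\subset \mathcal{T}_{\mathrm{max}}$, that $\epsilon$ restricts to a diffeomorphism onto its image, and that $\mathcal{O}_\lambda$ is embedded.

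\medskip

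\noindent\textbf{Step 1: $\mathcal{T}\subset\mathcal{T}_{\mathrm{max}}$.} First I would use characterization iii) of $\mathcal{T}_{\mathrm{max}}$ from Lemma \ref{lemma:split}, so the goal becomes: for every $\xi\in\mathcal{T}$, we have $\gg_{j(\xi)}\subset\gg_\lambda$. Fix $\xi\in\mathcal{T}$ and let $g\in G_{j(\xi)}$, i.e. $g\cdot j(\xi)=j(\xi)$. Then $g\cdot j(\xi)=j(\xi)\in j(\mathcal{T})\cap(g\cdot j(\mathcal{T}))$, so this intersection is nonempty, and the slice condition ($\lambda$2) forces $g\in G_\lambda$. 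This shows $G_{j(\xi)}\subset G_\lambda$, hence at the Lie algebra level $\gg_{j(\xi)}\subset\gg_\lambda$. Thus $\xi\in\mathcal{T}_{\mathrm{max}}$ by Lemma \ref{lemma:split}. (One small point to address: the slice condition as stated only promises $g\in G_\lambda$, and one should note $G_{j(\xi)}$ is automatically a closed subgroup, so passing to Lie algebras is legitimate — but in fact we only need the Lie algebra inclusion, which follows from $G_{j(\xi)}^0\subset G_\lambda$.)

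\medskip

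\noindent\textbf{Step 2: $\epsilon$ is a diffeomorphism onto its image.} By the remark preceding the lemma, the slice condition is equivalent to injectivity of $\epsilon$ on $G\times_{G_\lambda}\mathcal{T}$. By Step 1 combined with characterization ii) of Lemma \ref{lemma:split}, every point $[g,\xi]$ with $\xi\in\mathcal{T}$ is a regular point of $\epsilon$; since $\dim(G\times_{G_\lambda}\gg_\lambda^*)=\dim\gg^*$ (the fibers of $G/G_\lambda\to\mathcal{O}_\lambda$ match the codimension, as $\gg_\lambda^*$ has dimension $\dim\gg-\dim\mathfrak{c}$ and $\mathfrak{c}\cong T_\lambda\mathcal{O}_\lambda$), regularity means $\epsilon_*$ is an isomorphism at each such point, so $\epsilon|_{G\times_{G_\lambda}\mathcal{T}}$ is a local diffeomorphism. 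An injective local diffeomorphism is a diffeomorphism onto its (open) image; moreover it is $G$-equivariant since $\epsilon$ is. The fact that $j(\mathcal{T})$ is then a Poisson transversal is immediate from Step 1 together with characterization i) of Lemma \ref{lemma:split}.

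\medskip

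\noindent\textbf{Step 3: $\mathcal{O}_\lambda$ is embedded.} The orbit $\mathcal{O}_\lambda$ is the image of the zero section $G\times_{G_\lambda}\{0\}\subset G\times_{G_\lambda}\mathcal{T}$ under $\epsilon$ (using $j(0)=\lambda$ and $0\in\mathcal{T}$). Since $\epsilon$ restricts to a diffeomorphism of $G\times_{G_\lambda}\mathcal{T}$ onto an open subset of $\gg^*$, and the zero section is an embedded submanifold of $G\times_{G_\lambda}\mathcal{T}$, its image $\mathcal{O}_\lambda$ is an embedded submanifold of that open set, hence of $\gg^*$.

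\medskip

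\noindent I expect Step 1 to be the conceptual crux — it is where the slice hypothesis is actually consumed — but it is short once one recognizes that characterization iii) of $\mathcal{T}_{\mathrm{max}}$ is precisely a statement about stabilizers, which the slice condition controls directly. The only mild technical care needed is in Step 2, making sure the source and target of $\epsilon$ have equal dimension so that ``regular point'' upgrades to ``local diffeomorphism''; this is standard but worth a sentence.
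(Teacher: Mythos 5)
Your proposal is correct and follows essentially the same route as the paper: the paper also applies the slice condition to elements of the stabilizer of $j(\xi)$ (phrased via one-parameter subgroups $\exp(tv)$ for $v\in\gg_{j(\xi)}$ rather than via the full group $G_{j(\xi)}$) to get characterization iii), then invokes i) for the Poisson transversal claim and ii) plus injectivity for the diffeomorphism claim. Your explicit dimension count in Step 2 and the spelling-out of Step 3 are details the paper leaves implicit, but there is no substantive difference.
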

\begin{proof}
Take $v\in \gg_{j(\xi)}$, with $\xi\in \mathcal{T}$. Then, for all $t\in \mathbb{R}$, we have that 
\[g_t\cdot j(\xi)=j(\xi)\in j(\mathcal{T})\cap (g_t\cdot j(\mathcal{T})),\] 
where $g_t$ denotes $\exp(t v)$. By the slice condition, it follows that $g_t\in G_{\lambda}$. Hence $\gg_{j(\xi)}\subset \gg_{\lambda}$, so item iii) in Lemma \ref{lemma:split} holds. By item i), $j(\mathcal{T})$ is a Poisson transversal. By item ii) it follows that $\epsilon$ is a local diffeomorphism on $G\times_{G_{\lambda}}\mathcal{T}$, and since it is injective, it is a diffeomorphism onto its image.
\end{proof}

Finally, let us remark that conditions ($\lambda$1) and ($\lambda$2) hold, for example, when $G$ is compact (see Section 2.3.2 in \cite{GLS96} for an explicit slice in this case) and, more generally, when the action of $G$ is proper at $\lambda$. There are other cases when these conditions hold, for example, when $\lambda=0$.

\subsection*{Reducing to the slice}

Consider a Poisson Hamiltonian space
\[
G\acts (M,\pi)\stackrel{\mu}{\rmap}\gg^*
 \]
 and $\lambda\in \gg^*$ a value of the moment map which satisfies conditions ($\lambda$1) and ($\lambda$2). Since $j(\mathcal{T})$ is a Poisson transversal, the Poisson map $\mu:M \to \gg^*$ is transverse to it, its pullback
\[
X:=\mu^{-1}(j(\mathcal{T}))
\]
is again a Poisson transversal in $(M,\pi)$, and the induced map
\begin{align*}
    & \mu_X : (X,\pi_X) \rmap (\gg_{\lambda}^*,\pi_{\gg_{\lambda}}),
    & \mu_X:=j^{-1}\circ \mu|_X
\end{align*}
is again a Poisson map \cite[Lemma 2]{Frejlich_Marcut}.s In fact, 
\begin{equation}\label{eq:restriction:of:Ham:space}
G_{\lambda} \acts (X,\pi_X) \stackrel{\mu_X}{\rmap} \gg_{\lambda}^* 
\end{equation}
is a Poisson Hamiltonian space. In the next subsection, we will make extra assumptions of this Poisson Hamiltonian space, in order to apply the Main Theorem to it. Before that, we show that this Poisson Hamiltonian space determines the original Hamiltonian space around $X$.

\begin{proposition}\label{prop:restr:slice}
Consider a Poisson Hamiltonian space
\begin{equation}
\label{Poisson_Ham_space2}
G\acts (M,\pi)\stackrel{\mu}{\rmap}\gg^*,
 \end{equation}
and a value of the moment map $\lambda\in \gg^*$ which satisfies conditions ($\lambda$1) and ($\lambda$2). 

\begin{enumerate}[a) ]
\item With the notation introduced above, the map
\[\varphi: G\times_{G_{\lambda}}X\rmap M,\quad [g,x]\mapsto g\cdot x\]
is a $G$-equivariant diffeomorphism onto a $G$-invariant neighborhood of $X$ in $M$.
\item Denote by $p_{\lambda}:G\times X\to G\times_{G_{\lambda}}X$ and $p_X:G\times X\to X$ the projections, and by $\pi_0$ be the pullback by $\varphi$ of the Poisson structure $\pi$ on $M$: $\varphi^!\Gr(\pi)=\Gr(\pi_0)$. Then $\pi_0$ is determined by the equality of Dirac structures on $G\times X$: 
\[p^!_{{\lambda}}\Gr(\pi_0)=\mathcal{R}_{\dd \alpha}p_{X}^!\Gr(\pi_X),\]
where $\alpha$ is the one-form given by:
\[\alpha(V,U)=\la j\circ \mu_X(x),\theta_G^l(V) \ra, \qquad (V,U)\in T_{g}G\times T_xX,  \]  
and where $\theta_G^l$ denotes left-invariant Maurer Cartan form on $G$.
\item Hence, equipped the with moment map
\[\mu_0:=\mu\circ \varphi, \quad \mu_0([g,x])=g\cdot (j\circ \mu_X(x)),\]
we obtain a Poisson Hamiltonian space
\[G\acts (G\times_{G_{\lambda}}X,\pi_0)\stackrel{\mu_0}{\rmap}\gg^*,\]
which is isomorphic around $X$ to the Poisson Hamiltonian space \eqref{Poisson_Ham_space2}.
\end{enumerate}
\end{proposition}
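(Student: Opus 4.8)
The plan is to prove the three parts in order, with part a) being the geometric heart and parts b)--c) essentially bookkeeping on top of it. For part a), I would use the slice property of $j(\mathcal{T})$ together with the fact that $\mu^{-1}(j(\mathcal{T})) = X$. Since $\mu$ is $G$-equivariant, the fiber $\mu^{-1}(g\cdot j(\mathcal{T}))$ equals $g\cdot X$, so the slice condition $j(\mathcal{T})\cap(g\cdot j(\mathcal{T}))\ne\emptyset \implies g\in G_\lambda$ pulls back to: if $x\in X$ and $g\cdot x\in X$ then $g\in G_\lambda$. This is exactly what is needed for $\varphi$ to be injective (an element of $G\times_{G_\lambda}X$ mapping to a given point of $M$ is unique). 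For surjectivity onto a neighborhood of $X$: the differential of $\varphi$ along $X=\{[e,x]\}$ is surjective because $\varphi_*$ hits both $T_xX$ (via the $X$-directions) and a complement (via the $G$-directions, since $j(\mathcal{T})$ is transverse to the coadjoint orbit $\mathcal{O}_\lambda$ and $\mu$ is a submersion onto $\gg^*$ along regular values — here using that $\mathcal{T}\subset\mathcal{T}_{\mathrm{max}}$). Hence $\varphi$ is an injective local diffeomorphism along $X$, so it restricts to a $G$-equivariant diffeomorphism onto a $G$-invariant neighborhood of $X$. One should also remark that $\varphi$ is well-defined: if $x'=h\cdot x$ with $h\in G_\lambda$ then $\mu_X$ is $G_\lambda$-equivariant so $x'$ still lies in $X$, and $g\cdot x' = (gh)\cdot x$, consistent with the equivalence relation.

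For part b), the strategy is to compute $\varphi^!\mathrm{Gr}(\pi)$ directly and identify it after pulling back further along $p_\lambda: G\times X\to G\times_{G_\lambda}X$. Because $p_\lambda$ is a submersion, it suffices to check the equality of Dirac structures on $G\times X$. The idea is that on $G\times X$, the composite $\mu\circ\varphi\circ p_\lambda$ sends $(g,x)\mapsto g\cdot(j\circ\mu_X(x))$, and the Poisson structure $\pi$ near $X$ ``looks like'' the $X$-Poisson structure $\pi_X$ twisted by how the $G$-action spreads it off the slice — and that twist is precisely a gauge transformation by $\dd\alpha$ with the stated $\alpha$. Concretely: $X$ is a Poisson transversal so $\mathrm{Gr}(\pi)|_X$ restricts to $\mathrm{Gr}(\pi_X)$ via the pullback $i^!$; transporting by the $G$-action and recording the moment map condition $\act_M(v)=\pi^\sharp\dd\langle\mu,v\rangle$ produces exactly the $1$-form $\alpha(V,U)=\langle j\circ\mu_X(x),\theta_G^l(V)\rangle$, whose exterior derivative is the gauge term. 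I would verify the claimed formula by evaluating both sides on the three natural families of vectors on $G\times X$: the $X$-directions $(0,U)$, the left-invariant $G$-directions $(V,0)$, and checking the mixed pairings, using $\dd\alpha$ and the moment map identity — in close analogy with the computation of $-\dd\widetilde\theta$ in Proposition \ref{lemma:push:forward:foo}. Part c) is then immediate: the moment map $\mu_0=\mu\circ\varphi$ has the stated form by definition of $\varphi$, equivariance is inherited, the Hamiltonian condition is transported by the diffeomorphism $\varphi$, and a) plus b) together say precisely that $\varphi$ is an isomorphism of Poisson Hamiltonian spaces onto a neighborhood of $X$.

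The main obstacle I expect is part b): getting the $1$-form $\alpha$ exactly right, with the correct sign and the correct (left- versus right-invariant) Maurer--Cartan form, and making sure the gauge transformation is by $\dd\alpha$ rather than by $\alpha$ contracted against something. The cleanest route is probably not to guess $\alpha$ but to derive it: write a general point of $\varphi^!\mathrm{Gr}(\pi)$ near $X$, use that along $X$ itself the Dirac structure is $\mathrm{Gr}(\pi_X)$ (Poisson transversal), and use the flow of the $G$-action together with the infinitesimal moment map condition to propagate off $X$; the generating $1$-form that appears is then forced. A secondary subtlety is checking that the right-hand side $\mathcal{R}_{\dd\alpha}p_X^!\mathrm{Gr}(\pi_X)$ is genuinely $G_\lambda$-basic (descends along $p_\lambda$), which follows from $G_\lambda$-invariance of $\alpha$ — itself a consequence of $G_\lambda$-equivariance of $\mu_X$ and $\mathrm{Ad}$-equivariance of the Maurer--Cartan form — and one should remark on this to justify that $\pi_0$ is well-defined on $G\times_{G_\lambda}X$.
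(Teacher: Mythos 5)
Your proposal is correct and follows essentially the same route as the paper: injectivity of $\varphi$ from the slice condition (equivalently, injectivity of $\epsilon$), invertibility of $\varphi_*$ along $X$ (the paper checks injectivity plus a codimension count where you check surjectivity, a cosmetic difference), and for b) the determination of $p_\lambda^!\varphi^!\mathrm{Gr}(\pi)$ by evaluating on the $G$- and $X$-directions, using the Poisson-transversal property of the fibres $\{g\}\times X$ and the moment map condition to force the spanning sections and hence the one-form $\alpha$. Your remark about $G_\lambda$-basicness of the right-hand side is a sensible extra check that the paper leaves implicit.
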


\begin{remarks}\normalfont
\begin{enumerate}[(1)]
    \item For a symplectic version of this result, see Corollary 2.3.6 and Theorem 2.3.7 in \cite{GLS96}
\item The reader might have noticed that the map $j\circ \mu_X: X\to \gg^*$ is just the restriction $\mu|_X$. The more cumbersome notation was used in order to emphasize that the construction of the Poisson structure and moment map on $G\times_{G_{\lambda}}X$ depends only on the $G_{\lambda}$-Poisson Hamiltonian space \eqref{eq:restriction:of:Ham:space} and the affine map $j:\gg_{\lambda}^*\hookrightarrow \gg^*$ inducing the slice.
\item The proposition provides a recipe of how to ``extend'' a $G_{\lambda}$-Poisson Hamiltonian space to a $G$-Hamiltonian space, once an affine slice has been fixed. 
\item As opposed to our Main Theorem, this proposition holds also in the holomorphic or algebraic setting, because the isomorphism does not depend on choosing a connection or on the existence of some splitting; it depends only on the existence of the affine slice.
\end{enumerate}
\end{remarks}

\begin{proof}
We start by proving the form from the statement of the pullback of $\mu$:
\[\mu\circ \varphi([g,x])=\mu(g\cdot x)=g\cdot \mu(x)=g\cdot (j\circ \mu_X(x)),\]
where we used $G$-equivariance of $\mu$.
Hence, we have a commutative diagram of $G$-equivariant maps: 
\[
\xymatrix{
G\times_{G_{\lambda}} X \ar[r]^{\varphi}\ar[d]_{\mathrm{id}\times \mu_X}\ar[dr]^{\mu_0} & M \ar[d]^{\mu}\\
G\times_{G_{\lambda}}\mathcal{T}  \ar[r]^{\epsilon} & \gg^*. 
}
\]

Next, we show that $\varphi$ is injective. By commutativity of the diagram and the fact that $\epsilon$ is injective, we have that $\varphi([g,x])=\varphi([h,y])$ implies that $[g,\mu_X(x)]=[h,\mu_X(y)]$ in $G\times_{G_{\lambda}}\mathcal{T}$, and so $h=gk$, with $k\in G_{\lambda}$. Then $\varphi([g,y])=\varphi([h,y])$ implies that $g\cdot x=gk\cdot y$, and so $x=k\cdot y$. Hence $[g,x]=[h,y]$.

We show that $\varphi$ is a diffeomorphism onto its image. Since it is injective, it suffices to show that its differential is invertible, and by $G$-equivariance, it suffices to show this along $X$. Moreover, note that, since $\mu$ is transverse to $\mathcal{T}$, it follows that:
\[\mathrm{codim}_M(X)=\mathrm{codim}_{\gg^*}(j(\mathcal{T}))=\mathrm{codim}_{\gg}(\gg_{\lambda})=\mathrm{codim}_{G}(G_{\lambda}).\]
Hence the dimensions match, and so it suffices to show that $\varphi_*$ is injective at points in $X$. Let $v\in \gg$ and $U\in T_xX$ be such that $[v,U]\in T_{[e,x]}(G\times_{G_{\lambda}}X)$ is in the kernel of $\varphi_*$. Using commutativity of the diagram and that $\epsilon_*$ is invertible, it follows that $[v,(\mu_X)_*U]=0$, hence $v\in \gg_{\lambda}$, so by changing representatives, we may assume that $v=0$. But then $\varphi_*[0,U]=i_*U=0$, where $i:X\hookrightarrow M$ is the inclusion, and so $U=0$. This concludes the proof that $\varphi$ is a diffeomorphism onto its image. 

Next consider the Dirac structure on $G\times X$ given by:
\[D:=p_{\lambda}^! \varphi^! \Gr(\pi).\]
This Dirac structure has the following properties:
\begin{enumerate}[1.]
%    \item The vertical bundle for the projection $p_{\lambda}:G\times X\to G\times_{G_{\lambda}}X$ is included in $D$:
%    \[((l_g)_*v,\act_X(v)_x,0,0)\in D_{(g,x)},\quad \forall\, v\in \gg_{\lambda};\]
    \item $D$ is invariant under the action of $G$
    \begin{align*}
        & G \acts G \times X, & g \cdot (g',x):=(gg',x)
    \end{align*}
    %\[\big((l_g)_*\times  \mathrm{id}_{TX}\times l_{g^{-1} }^*\times \mathrm{id}_{T^*X}\big)\, D_{(h,x)}=D_{(gh,x)}.\]
    \item $D$ is Hamiltonian for this action of $G$, with moment map $\mu_0\circ p_{\lambda}$:
    \[
    G \acts (G \times X,D) \stackrel{\mu_0p_{\lambda}}{\rmap} \gg^*,
    \]that is: for all $v \in \gg$, we have that
    \begin{align*}
        & \act_{G\times X}(v)+(\mu_0p_{\lambda})^*(v) \in \Gamma(D).
    \end{align*}
    %\[(-r_g(v),0,\dd \langle \mu_0 \circ p_{\lambda}, v\rangle) \in D_{(g,x)}, \quad\forall\, v\in \gg;\]
    \item The fibres of $\pr_1 : G \times X \to G$ are Poisson transversals in $(G \times X,D)$, and the induced Poisson structure on the fibres is $\pi_X$.
\end{enumerate}
The first property follows because $\Gr(\pi)$ is $G$-invariant and both $p_{\lambda}$ and $\varphi$ are $G$-equivariant. Because \[ G \acts (G\times_{G_{\lambda}}X,\varphi^!\Gr(\pi)) \stackrel{\mu_0}{\rmap} \gg^*\]
is a Dirac Hamiltonian space, that is,
\[\act_{G\times_{G_{\lambda}}X}(v)+\mu_0^*(v) \in \varphi^!\Gr(\pi)\]
for all $v \in \gg$, and since 
\[(p_{\lambda})_*\act_{G\times X}(v)=\act_{G\times_{G_{\lambda}}X}(v),\]
property 2 follows. Moreover, we obtain an injective vector bundle map
\begin{align*}
        & a : \gg\times G \times X \rmap D, & a(v,g,x):=\act_{G\times X}(v)_{(g,x)}+(\mu_0p_{\lambda})^*(v)_{(g,x)} \in D_{(g,x)}.
    \end{align*}
This implies also that the fibres of $G \times X \to G$ are Dirac transversals for $D$. Because the composition of $\varphi \circ p_{\lambda}:G \times X \to M$ with 
\[i_{1} : X \to G \times X, \qquad i_1(x)=(1,x),\] is just the inclusion $X\hookrightarrow M$, it follows that the fibre $\{1\} \times X$ is a Poisson transversal with induced Poisson structure $\pi_X$. By property 1, the same holds for every fibre $\{g\} \times X$, so property 3 holds. 

Since $(\{1\} \times X,\pi_X)$ is a Poisson transversal in $(G \times X,D)$, for any $\xi\in T^*_xX$ there exists a unique element $b(\xi)\in D_{(1,x)}$ which is $i_1$ related to $\pi_X^{\sharp}\xi+\xi\in \Gr(\pi_X)$. We claim that
\begin{align*}
    b(\xi) = ((j \circ \mu_X)_*\pi_X^{\sharp}(\xi),\pi_X^{\sharp}(\xi)+\xi).
\end{align*}
Note that any element in $\mathbb{T}_{(1,x)}(G\times X)$ which is $i_1$-related to $\pi_X^{\sharp}(\xi)+\xi$ must take the form $b'(\xi):=(\beta_{\xi},\pi_X^{\sharp}(\xi)+\xi)$ for some $\beta_{\xi} \in \gg^*$, and if such an element were to lie in $D_{(1,x)}$, then
\begin{align*}
    \la b'(\xi), a(v)\ra = -\la \beta_{\xi},v\ra +\la (\mu_0p_{\lambda})_*\pi_X^{\sharp}(\xi),v\ra = -\la \beta_{\xi},v\ra +\la (j \circ \mu_X)_*\pi_X^{\sharp}(\xi),v\ra
\end{align*}must vanish for all $v \in \gg$, which is to say that $b'(\xi)=b(\xi)$. %Hence $b(\xi)$ is the unique element of $D_{(1,x)}$ which is $i_1$-related to $\pi_X^{\sharp}(\xi)+\xi \in \mathrm{Gr}(\pi_X)$, for all $\xi \in T^*_xX$ and $x \in X$. 

By property 1, we get an injective vector bundle map
\begin{align*}
    & b : G \times T^*X \rmap D, & b(g,\xi):=g_*b(\xi).
\end{align*}
It is easy to see that $a$ and $b$ are transverse, and by comparing ranks, we obtain an isomorphism:
\begin{align*}
  &\gg \times G \times T^*X \diffto D,   & (v,g,\xi) \mapsto a(v,g,x)+b(g,\xi).
\end{align*}

Next, we prove the formula for $D$ from item b) in the statement:
\begin{align}
D = \mathcal{R}_{\dd\alpha}p_X^!\mathrm{Gr}(\pi_X),
\end{align}
where $\alpha \in \Omega^1(G \times X)$ is the unique one-form with the property that
\begin{align*}
    & \alpha(v^L,U):=\la j \circ \mu_X,v\ra, & (v,U) \in \gg \times T_xX,
\end{align*}where $v^L$ denotes the extension of $v \in \gg$ of a left-invariant vector field on $G$. This one-form is $G$-invariant, and, for all $v_1,v_2\in \gg$ and $U_1,U_2\in \mathfrak{X}(X)$, $\dd\alpha$ reads
\begin{align}\label{eq:d:alpha}
\dd \alpha((v_1^L,U_1),(v_2^L,U_2))&= \la (j\circ \mu_X)_* U_1,v_2\ra -
\la (j\circ \mu_X)_* U_2,v_1\ra-\la j\circ \mu_X ,[v_1,v_2]\ra.
\end{align} 

This implies that
\begin{align*}
\iota_{\pi_X^{\sharp}(\xi)}\dd \alpha_{(1,x)} = (j\circ \mu_X)_*\pi_X^{\sharp}(\xi)_x, 
\end{align*}
and so:
\[b(\xi)=\mathcal{R}_{\dd\alpha}\circ \mathcal{R}_{(0,\pi_X)}(0,\xi)\in 
\mathcal{R}_{\dd\alpha}p_X^!\Gr(\pi_X),
\]
and by property 1, $g_*b(\xi)\in \mathcal{R}_{\dd\alpha}p_X^!\Gr(\pi_X)$. 

In fact, fix $w\in \gg$, $U\in \mathfrak{X}(X)$ and denote by $\phi_{U}^t$ the local flow of $U$. Then, at $(1,x)$, we have that 
\begin{align*}
\la (\mu_0\circ p_{\lambda})^*(v),(w^L,U)\ra_{(1,x)} & = \la (\mu_0\circ p_{\lambda})_*(w,U),v\ra\\
&=\tfrac{\dd}{\dd t}\big|_{t=0}\la \mu_0\circ p_{\lambda}(\exp(tw),\phi_{U}^t(x)),v\ra\\
&=\tfrac{\dd}{\dd t}\big|_{t=0}\la \exp(tw)\cdot j\circ \mu_X(\phi_{U}^t(x)),v\ra\\
&=\tfrac{\dd}{\dd t}\big|_{t=0}\la j\circ \mu_X(\phi_{U}^t(x)), \exp(-tw)\cdot v\ra\\
&=\la (j\circ \mu_X)_*U, v\ra+\la j\circ \mu_X, [v,w]\ra\\
&=\left(-\iota_{v^L}\dd \alpha(w^L,U)\right)_{(1,x)}.
\end{align*}

Because $-(v^L,0)$ coincides with $\act_{G \times X}(v)$ at points of the form $(1,x) \in G \times X$, we have that
\begin{align*}
    a(v)_{(1,x)} = \act_{G \times X}(v)_{(1,x)} + (\mu_0\circ p_{\lambda})_{(1,x)}^*(v) \in \mathcal{R}_{\dd\alpha}p_X^!\mathrm{Gr}(\pi_X)_{(1,x)}.
\end{align*}
Therefore, $G$-invariance of $\mathcal{R}_{\dd\alpha}p_X^!\mathrm{Gr}(\pi_X)$ and $G$-equivariance of the moment map $\mu_0\circ p_{\lambda}$ imply:
\begin{align*}
    a(v)_{(g,x)} = g_*a(g^{-1} \cdot v)_{(1,x)} \in \mathcal{R}_{\dd\alpha}p_X^!\mathrm{Gr}(\pi_X)_{(g,x)}.
\end{align*}
We conclude that the sections $a(v)$ and $b(\xi)$ which span $D$ lie in fact in $\mathcal{R}_{\dd\alpha}p_X^!\mathrm{Gr}(\pi_X)$, from which it follows that these two Dirac structures coincide:
\[
\mathcal{R}_{\dd\alpha}p_X^!\mathrm{Gr}(\pi_X) = D.\qedhere
\]
\end{proof}

\subsection*{The normal form theorem}
Next, we give a normal form theorem around more general values of the moment map. As before, we consider a principal Hamiltonian $G$-space
\begin{equation}\label{Poisson_Ham_space2_again}
G\acts (M,\pi)\stackrel{\mu}{\rmap}\gg^*.
 \end{equation}
For any $\lambda\in \mu(M)$, we have the principal $G_{\lambda}$-bundle $P_{\lambda}:=\mu^{-1}(\lambda)$ over $S_{\lambda}:=\mu^{-1}(\lambda)/G_{\lambda}$.
The base manifold $S_{\lambda}$ has an induced Poisson structure $\pi_{S_{\lambda}}$, characterized by the following condition:
\begin{align*}
& p_{S_{\lambda}}^!\mathrm{Gr}(\pi_{S_{\lambda}})=i^!\mathrm{Gr}(\pi), 
\end{align*}
where $p_{S_\lambda}:P_{\lambda} \to S_{\lambda}$ is the projection and $i:P_{\lambda}\hookrightarrow M$ is the inclusion. The normal form theorem shows that, for special values $\lambda$, the data 
\begin{align*}\label{eq:datum}
& G_{\lambda}\acts P_{\lambda}\stackrel{p_{\lambda}}{\rmap} (S_{\lambda},\pi_{S_{\lambda}})
\end{align*}
can be used to reconstruct the Poisson Hamiltonian $G$-space \eqref{Poisson_Ham_space2_again} around $\mu^{-1}(\lambda)$.

\begin{theorem}\label{thm : normal form other values}
For $\lambda\in \gg^*$ satisfying ($\lambda$1) and ($\lambda$2), any principal Hamiltonian $G$-space \eqref{Poisson_Ham_space2_again} is isomorphic around $P_{\lambda}$ to the Hamiltonian Poisson space
\[
G \acts \left((G\times P_{\lambda}\times \gg_{\lambda}^*)/G_{\lambda},\overline{D}\right)\stackrel{\overline{\mu}}{\rmap} \gg^*,
\]
where the moment map is given by 
\[\overline{\mu}([g,x,\xi])=g\cdot j(\xi),\]
and the Dirac structure $\overline{D}$ is determined by the condition that its pullback via $p_{\lambda}:G\times P_{\lambda}\times \gg_{\lambda}^*\to (G\times P_{\lambda}\times \gg_{\lambda}^*)/G_{\lambda}$ is given by 
\[p_{\lambda}^!\overline{D}=\mathcal{R}_{\dd(\alpha-\widetilde{\theta}_{\lambda})}p_{S_{\lambda}}^!\mathrm{Gr}(\pi_{S_{\lambda}}),\]
where $\widetilde{\theta}_{\lambda} \in \Omega^1(P_{\lambda} \times \gg_{\lambda}^*)$ is the one-form corresponding to a principal  $G_{\lambda}$-connection $\theta_{\lambda} \in \Omega^1(P_{\lambda};\gg_{\lambda})$, and the one-form $\alpha \in \Omega^1(G \times \gg_{\lambda}^*)$ is given by
\begin{align*}
    & \alpha(V,\eta)_{(g,\xi)}=\la j(\xi), (g^{-1})_*V\ra, & (V,\eta) \in T_gG \times T_{\xi}\gg_{\lambda}^*.
\end{align*}
\end{theorem}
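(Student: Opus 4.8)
The plan is to combine the two main results proven earlier: the Main Theorem (the normal form around $\mu^{-1}(0)$) applied to the \emph{reduced} Hamiltonian space over the slice, and Proposition \ref{prop:restr:slice}, which reconstructs the $G$-space from its $G_\lambda$-reduction over an affine slice. First, I would form the Poisson transversal $X := \mu^{-1}(j(\mathcal{T}))$ and the induced $G_\lambda$-Poisson Hamiltonian space $G_\lambda \acts (X,\pi_X) \xrightarrow{\mu_X} \gg_\lambda^*$ as in \eqref{eq:restriction:of:Ham:space}. The key observation is that $\mu_X^{-1}(0) = \mu^{-1}(j(0)) = \mu^{-1}(\lambda) = P_\lambda$, and the induced $G_\lambda$-bundle structure $P_\lambda \to S_\lambda$ together with its reduced Poisson structure $\pi_{S_\lambda}$ agrees with the data $G_\lambda \acts P_\lambda \to (S_\lambda,\pi_{S_\lambda})$ from \eqref{eq:datum}; this requires checking that the reduced Poisson structure obtained from $(X,\pi_X)$ by Marsden--Weinstein coincides with the one characterized by $p_{S_\lambda}^!\Gr(\pi_{S_\lambda}) = i^!\Gr(\pi)$, which follows from the compatibility of Poisson-transversal restriction with Dirac pullback (the equality $j^!\Gr(\pi_\gg) = \Gr(\pi_{\gg_\lambda})$ and $\mu_X = j^{-1}\circ\mu|_X$).

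Second, I would apply the Main Theorem to the $G_\lambda$-Poisson Hamiltonian space \eqref{eq:restriction:of:Ham:space}: since the $G$-action on $M$ is proper and free, so is the restricted $G_\lambda$-action on $X$, so the hypotheses hold. This yields a $G_\lambda$-equivariant isomorphism, identity on $P_\lambda$, between a $G_\lambda$-invariant neighborhood of $P_\lambda$ in $(X,\pi_X)$ and a neighborhood of $P_\lambda$ in the local model $G_\lambda \acts (P_\lambda \times \gg_\lambda^*, \pi_{S_\lambda}^{-\dd\widetilde\theta_\lambda}) \xrightarrow{\pr_2} \gg_\lambda^*$ built from a principal $G_\lambda$-connection $\theta_\lambda$ on $P_\lambda$, whose Dirac structure is $\mathcal{R}_{-\dd\widetilde\theta_\lambda}\,p_{S_\lambda}^!\Gr(\pi_{S_\lambda})$.

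Third, I would feed this identification into the extension construction of Proposition \ref{prop:restr:slice}: that proposition says a $G$-invariant neighborhood of $X$ in $M$ is isomorphic to $G \acts (G\times_{G_\lambda} X, \pi_0) \xrightarrow{\mu_0} \gg^*$ with $p_\lambda^!\Gr(\pi_0) = \mathcal{R}_{\dd\alpha}\,p_X^!\Gr(\pi_X)$ and $\mu_0([g,x]) = g\cdot(j\circ\mu_X(x))$. Substituting the local model for $(X,\pi_X)$ — i.e. replacing $X$ by $P_\lambda\times\gg_\lambda^*$, replacing $p_X^!\Gr(\pi_X)$ by $\mathcal{R}_{-\dd\widetilde\theta_\lambda}\,p_{S_\lambda}^!\Gr(\pi_{S_\lambda})$ (pulled back from $P_\lambda$), and pulling the one-form $\alpha$ back from $G\times\gg_\lambda^*$ via $(g,x,\xi)\mapsto(g,\xi)$ using that $\mu_X$ on the model is just $\pr_2$ — gives $p_\lambda^!\overline{D} = \mathcal{R}_{\dd\alpha}\mathcal{R}_{-\dd\widetilde\theta_\lambda}\,p_{S_\lambda}^!\Gr(\pi_{S_\lambda}) = \mathcal{R}_{\dd(\alpha-\widetilde\theta_\lambda)}\,p_{S_\lambda}^!\Gr(\pi_{S_\lambda})$, with $\overline\mu([g,x,\xi]) = g\cdot j(\xi)$, matching the statement. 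One must also verify that pulling back the generalized connection $\widetilde\theta_\lambda$ along the projection $G\times P_\lambda\times\gg_\lambda^* \to P_\lambda\times\gg_\lambda^*$ is $G$-invariant and descends appropriately so that the gauge transform is well-defined on the quotient.

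The main obstacle I expect is the bookkeeping in the third step: carefully tracking how the one-form $\alpha$ of Proposition \ref{prop:restr:slice} — originally defined via $j\circ\mu_X$ on $X$ — transforms when $X$ is replaced by the local model $P_\lambda\times\gg_\lambda^*$, where $\mu_X$ becomes $\pr_2$, so that $\alpha(V,U,\eta)_{(g,x,\xi)} = \langle j(\xi),(g^{-1})_*V\rangle$; and confirming that $\mathcal{R}_{\dd\alpha}$ and $\mathcal{R}_{-\dd\widetilde\theta_\lambda}$, applied in sequence on $G\times P_\lambda\times\gg_\lambda^*$, combine into a single gauge transform by $\dd(\alpha-\widetilde\theta_\lambda)$ and that the composite Dirac structure is $G_\lambda$-basic so that it descends to $\overline{D}$ on the quotient by $G_\lambda$. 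Everything else is a matter of assembling Main Theorem $+$ Proposition \ref{prop:restr:slice} and invoking that the composition of two isomorphisms of Poisson Hamiltonian spaces, each the identity on $P_\lambda$, is again such an isomorphism.
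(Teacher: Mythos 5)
Your proposal is correct and follows essentially the same route as the paper: restrict to the Poisson transversal $X=\mu^{-1}(j(\mathcal{T}))$, apply the Main Theorem to the free and proper $G_{\lambda}$-Hamiltonian space $(X,\pi_X)$ with $\mu_X^{-1}(0)=P_{\lambda}$, and then feed the resulting local model into Proposition \ref{prop:restr:slice}, composing the two gauge transformations into $\mathcal{R}_{\dd(\alpha-\widetilde{\theta}_{\lambda})}$. The verifications you flag as potential obstacles (agreement of the reduced Poisson structures and the descent to the $G_{\lambda}$-quotient) are indeed the points the paper treats as immediate, and they do go through as you expect.
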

%In particular, \eqref{Poisson_Ham_space2_again} is
%determined up to isomorphism around $P_{\lambda}$ by the $G_{\lambda}$-principal bundle $p_{S_\lambda}:P_{\lambda} \to S_{\lambda}$ and the Poisson structure $\pi_{S_{\lambda}}$.

\begin{proof}[Proof of Theorem \ref{thm : normal form other values}]
Using the notation of the previous subsection, we have a Poisson transversal $X:=\mu^{-1}(j(\mathcal{T}))$, and
\begin{equation}\label{eq:restriction:of:Ham:space:2}
G_{\lambda} \acts (X,\pi_X) \stackrel{\mu_X}{\rmap} \gg_{\lambda}^* 
\end{equation}
is a free and proper Poisson Hamiltonian space. Moreover, we have that:
\[\mu_X^{-1}(0)=P_{\lambda},\]
and clearly, the induced Poisson structure on the base $S_{\lambda}$ is $\pi_{S_{\lambda}}$. So we can apply the Main Theorem to \eqref{eq:restriction:of:Ham:space:2}, and we obtain that it is isomorphic around $P_{\lambda}$ to the Dirac Hamiltonian space:
\[
G_{\lambda} \acts \left(P_{\lambda}\times \gg_{\lambda}^*,\mathcal{R}_{-\mathrm{d}\widetilde{\theta}_{\lambda}}p_{S_{\lambda}}^!\mathrm{Gr}(\pi_{S_{\lambda}})\right) \stackrel{\pr_2}{\rmap} \gg_{\lambda}^*,
\]
where $\theta_{\lambda}$ is a $G_{\lambda}$-principal connection on $P_{\lambda}\to S_{\lambda}$. Next, we apply Proposition \ref{prop:restr:slice}, and conclude that a $G$-invariant neighborhood of $P_{\lambda}$ in $M$ is isomorphic, as a Poisson Hamiltonian space, to a $G$-invariant neighborhood of $P_{\lambda}$ in the Dirac Hamiltonian space:
\[
G \acts \left((G\times P_{\lambda}\times \gg_{\lambda}^*)/G_{\lambda},\overline{D}\right)\stackrel{\overline{\mu}}{\rmap} \gg^*,
\]
where the moment map is given by 
\[\overline{\mu}([g,x,\xi])=\epsilon ([g,\xi])=g\cdot j(\xi),\]
and the Dirac structure is determined by the equality of Dirac structure on $G\times P_{\lambda}\times \gg_{\lambda}^*$:
\[p_{\lambda}^!\overline{D}=\mathcal{R}_{\dd(\alpha-\widetilde{\theta}_{\lambda})}p_{S_{\lambda}}^!\mathrm{Gr}(\pi_{S_{\lambda}}),\]
and the 1-forms are given by 
\[\alpha(V,U,\eta)_{(g,x,\xi)}=\la j(\xi), (g^{-1})_*V\ra,\qquad 
\widetilde{\theta}_{\lambda}(V,U,\eta)_{(g,x,\xi)}=\la\xi,{\theta}_{\lambda}(U)\ra.\qedhere\]
\end{proof}

\begin{remark}\normalfont
The extension to proper and infinitesimally free actions holds in this setting as well. 
\end{remark}

\begin{remark}\normalfont
Since the normal form holds on a $G$-invariant neighborhood, it can be seen also as a normal form around the preimage of the orbit \[Q_{\lambda}:=\mu^{-1}(\mathcal{O}_{\lambda})=\mu^{-1}(G\cdot \lambda).\]
The theorem provides a $G$-equivariant identifications: $(G\times P_{\lambda})/G_{\lambda}\simeq Q_{\lambda}$, and of the local model $(G\times P_{\lambda}\times \gg_{\lambda}^*)/G_{\lambda}$ with the normal bundle of $Q_{\lambda}$ in $M$. Moreover, since $\mathcal{O}_{\lambda}\subset \gg^*$ is a Poisson submanifold, it follows that $Q_{\lambda}$ is a coisotropic submanifold in $M$. The theorem implies that under the identification $Q_{\lambda}\simeq (G\times P_{\lambda})/G_{\lambda}$, the induced Dirac structure $\overline{D}_{\lambda}$ on $(G\times P_{\lambda})/G_{\lambda}$ has the property that its pullback to $G\times P_{\lambda}$ is given by
\[\mathcal{R}_{\dd \widetilde{\lambda}}p_{S_{\lambda}}^!\mathrm{Gr}(\pi_{S_{\lambda}}),\]
where $\widetilde{\lambda}\in \Omega^1(G)$ is the left-invariant extension of $\lambda$ to $G$. This follows because, under the zero-section $i:G\times P_{\lambda}\hookrightarrow G\times P_{\lambda}\times \gg^{*}_{\lambda}$, we have that $i^*\alpha= \widetilde{\lambda}$ and $i^*\widetilde{\theta}_{\lambda}
= 0$. Thus $((G\times P_{\lambda})/G_{\lambda}, \overline{D}_{\lambda})$ is a coisotropic Poisson transversal in the local model. Around such submanifolds the the local structure is determined, as shown in \cite[Section 4.3]{Geudens_2020}, and our local model is a more specialized version of this construction, where we also keep track of the $G$-action and the moment map.
\end{remark}


\begin{thebibliography}{9}

\bibitem{Alekseev_Meinrenken_2007} A.~Alekseev, E.~Meinrenken, \emph{Ginzburg-Weinstein via Gelfand-Zeitlin}, Journal of Differential Geometry 76 (2007) 1--34. \verb|doi:10.4310/jdg/1180135664|

\bibitem{ABM} A.~Alekseev, H.~Bursztyn and E.~Meinrenken, \emph{Pure spinors on Lie groups}, From probability to geometry (I) - Volume in honor of the 60th birthday of Jean-Michel Bismut, Ast\'erisque no. 327 (2009), 131--199. \url{http://www.numdam.org/item?id=AST_2009__327__131_0}

\bibitem{Bischoff_Bursztyn_Lima_Meinrenken_2020} F.~Bischoff, H.~Bursztyn, H.~Lima, E.~Meinrenken \emph{Deformation spaces and normal forms around transversals}, Compositio Mathematica 156 (2020), 697-732. \verb|doi:10.1112/S0010437X1900784X|



\bibitem{BrFe08} O.~Brahic, R.~L.~Fernandes, 
\emph{Poisson fibrations and fibered symplectic groupoids}, Poisson geometry in mathematics and physics, 41--59,
Contemp.~Math., 450, Amer.~Math.~Soc., Providence, RI, 2008 \verb|doi:10.1090/conm/450|

\bibitem{Bursztyn} H.~Bursztyn, \emph{A brief introduction to Dirac manifolds}  Geometric and Topological Methods for Quantum Field Theory, 4-38, Cambridge University Press (2013). \verb|doi:10.1017/CBO9781139208642.002|

\bibitem{Bursztyn_Lima_Meinrenken} H.~Bursztyn, H.~Lima, E.~Meinrenken, \emph{Splitting theorems for Poisson and related structures}, Journal f\"ur die reine und angewandte Mathematik (Crelles Journal).
\verb|doi:10.1515/crelle-2017-0014|

\bibitem{CrMa12} 
M.\ Crainic and I.\ M\u{a}rcu\cb{t}, 
\emph{A normal form theorem around symplectic leaves}, 
Journal of Differential Geometry \ \textbf{92} (2012), no.\ 3, 417--461. \verb|doi:10.4310/jdg/1354110196|

\bibitem{MariusRuiIonut} M.\ Crainic, R.~L.~Fernandes and I.\ M\u{a}rcu\cb{t}, \emph{Lectures on Poisson Geometry}, AMS Graduate Studies in Mathematics
Volume 217 (2021) \verb|ISBN:978-1-4704-6430-1|

\bibitem{DuWa} J-P.~Dufour, A.~Wade,
\emph{On the local structure of Dirac manifolds}, Compositio Mathematica \textbf{144} (2008), no.~3, 774--786 \verb|doi:10.1112/S0010437X07003272|

\bibitem{FeMa22} R.~L.~Fernandes, I.~M\u{a}rcu\cb{t}, \emph{Poisson geometry around Poisson submanifolds}, arXiv:2205.11457.

\bibitem{Frejlich_Marcut} P.~Frejlich, I.~M\u{a}rcu\cb{t}, \emph{The normal form theorem around Poisson transversals}, Pacific Journal of Mathematics Vol. 287 (2017), No. 2, 371--391. \verb|doi:10.2140/pjm.2017.287.371|

\bibitem{PT1.5} P.~Frejlich, I.~M\u{a}rcu\cb{t}, \emph{Normal forms for Poisson maps and symplectic groupoids around Poisson transversals}, Letters in Mathematical Physics \textbf{108}, 711--735 (2018) \verb|doi.org/10.1007/s11005-017-1007-2|


\bibitem{Frejlich_Marcut18} P.~Frejlich, I.~M\u{a}rcu\cb{t},
\emph{On dual pairs in Dirac geometry}, Mathematische Zeitschrift \textbf{289} (2018), no.\ 1--2, 171--200. \verb|doi:10.1007/s00209-017-1947-3|



\bibitem{Subm} P.~Frejlich, \emph{Submersions by Lie algebroids}, Journal of Geometry and Physics Volume 137 (2019) 237--246.
\verb|doi:10.1016/j.geomphys.2018.12.011|

\bibitem{Geudens_2020} S.~Geudens, \emph{The Poisson saturation of coregular submanifolds}, 2020,
\verb|arXiv:2011.12650|,
to appear in \emph{International Mathematics Research Notices}.  \verb|doi.org/10.1093/imrn/rnac113|

\bibitem{Gotay} M.~Gotay, \emph{On coisotropic imbeddings of presymplectic manifolds}, Proceedings of the American Mathematical Society \textbf{84} (1982), 111--114. \verb|doi:10.1090/S0002-9939-1982-0633290-X|

\bibitem{Gualtieri} M.~Gualtieri, \emph{ Generalized complex geometry}, Annals of Mathematics Volume 174 (2011), Issue 1, p. 75--123. \verb|doi:10.4007/annals.2011.174.1.3|

\bibitem{GLS96} V.~Guillemin, E.~Lerman, S.~Sternberg, \emph{Symplectic fibrations and multiplicity diagrams}, Cambridge University Press, Cambridge, 1996 \verb|doi:10.1017/CBO9780511574788|

\bibitem{Guillemin_Sternberg} V.~Guillemin, S.~Sternberg, \emph{Symplectic techniques in physics}, Second edition, Cambridge University Press, Cambridge, 1990 \verb|ISBN:9780521389907|

\bibitem{Jeffrey_Kirwan}L.~Jeffrey, F.~Kirwan, \emph{Localization for nonabelian group actions}, Topology Volume 34, Issue 2, 291--327 (1995). \verb|doi:10.1016/0040-9383(94)00028-J|

\bibitem{Marsden_Ratiu} J.~E.~Marsden, T.~Ratiu, \emph{Reduction of Poisson manifolds}, Letters in Mathematical Physics \textbf{11}, 161--169 (1986). \verb|doi:10.1007/BF00398428|

\bibitem{Marsden_Weinstein} J.~E.~Marsden, A.~Weinstein, \emph{Reduction of symplectic manifolds with symmetry}, Reports on Mathematical Physics Volume 5, Issue 1 121--130 (1974). \verb|doi:10.1016/0034-4877(74)90021-4|

\bibitem{Meinrenken} E.~Meinrenken, \emph{Poisson geometry from a Dirac perspective}, Letters in Mathematical Physics \textbf{108}, 447--498 (2018). \verb|doi:10.1007/s11005-017-0977-4|

\bibitem{Meinrenken_notes} E.~Meinrenken, \emph{Symplectic geometry}, lecture notes.
\url{https://www.math.toronto.edu/mein/teaching/LectureNotes/sympl.pdf}

\bibitem{Molino}P.~Molino, \emph{Structure transverse aux orbites de la repr\'esentation coadjointe: le cas des orbites r\'eductives}, S\'eminaire de G\'eom\'etrie Diff\'erentielle, Universit\'e Montpellier II Sciences et Techniques du Languedoc, 55 -- 62 (1983/1984)

\bibitem{Montgomery} R.~Montgomery, \emph{The Bundle Picture in Mechanics}, PhD Thesis, UC Berkeley 1986

\bibitem{Ortega_Ratiu} J.~P.~Ortega, T.~Ra\cb{t}iu, \emph{Momentum maps and Hamiltonian reduction},
Progress in Mathematics, 222, Birkh\"auser Boston, Inc., Boston, MA, 2004. \verb|doi:10.1007/978-1-4757-3811-7|

\bibitem{Palais} R.~S.~Palais, \emph{On the existence of slices for actions of non-compact Lie groups}, Annals of Mathematics \textbf{73} (1961), 295--323. \verb|doi:10.2307/1970335|

\bibitem{Sternberg}S.~Sternberg, \emph{Minimal coupling and the symplectic mechanics of a classical particle in the presence of a Young–Mills field}, Proceedings of the National Academy Sciences \textbf{74} (1977), 5253--5254. \verb|doi:10.1073/pnas.74.12.5253|

\bibitem{Vaisman} I.~Vaisman, 
\emph{Foliation-coupling Dirac structures},
Journal of Geometry and Physics \textbf{56} (2006), no.~6, 917--938.

\bibitem{Vorobjev01} 
Y.\ Vorobjev, \emph{Coupling tensors and Poisson geometry near a single symplectic leaf}, 
Banach Center Publications \textbf{54} (2001), 249--274. \verb|doi:10.4064/bc54-0-14|

\bibitem{Vorobjev05} 
Y.\ Vorobjev, 
\emph{Poisson equivalence over a symplectic leaf}, 
Quantum algebras and Poisson geometry in mathematical physics, American Mathematical Society Translations (Series 2) \textbf{216} (2005), 241--277. \verb|doi:10.1090/trans2/216|

\bibitem{Wade} A.~Wade, \emph{Poisson fiber bundles and coupling Dirac structures},
Annals of Global Analysis and Geometry \textbf{33} (2008), no 3, 207--217 \verb|doi:10.1007/s10455-007-9079-3|

\bibitem{Weinstein}A.~Weinstein, \emph{A universal phase space for particles in Yang-Mills fields}, Letters in Mathematical Physics \textbf{2}, 417--420 (1978). \verb|doi:10.1007/BF00400169|

\bibitem{Wein83} A.~Weinstein, \emph{The local structure of Poisson manifolds},  Journal of Differential Geometry \textbf{18} (1983), 523--557.

\bibitem{Wein85E} A.~Weinstein, \emph{The local structure of Poisson manifolds, Errata and addenda}, Journal of Differential Geometry \textbf{22} (1985), 255.

\end{thebibliography}
\end{document}